\documentclass[reqno,11pt]{amsart}
\usepackage{amsmath,amsxtra,amsbsy,enumitem, latexsym, amsfonts, amssymb, amsthm, amscd, stmaryrd}
\setlist[itemize]{leftmargin=0.7cm}
\usepackage{hyperref}
\usepackage{graphics,epsf,psfrag}
\usepackage{bbm, dsfont}
\usepackage{mathtools}
\usepackage{xcolor}
\usepackage{float}
\usepackage{tikz}
\usepackage[headings]{fullpage}
\usepackage{colonequals} 
\usepackage{microtype}

\setcounter{secnumdepth}{2}
\numberwithin{equation}{section}

\setlength{\oddsidemargin}{5mm}
\setlength{\evensidemargin}{5mm}
\setlength{\textwidth}{160mm}
\setlength{\headheight}{0mm}
\setlength{\headsep}{12mm}
\setlength{\topmargin}{0mm}
\setlength{\textheight}{223mm}

\usepackage{subcaption}
\captionsetup{width=0.95\textwidth}


\definecolor{darkred}{rgb}{0.7,0.1,0.1}

\definecolor{darkgreen}{rgb}{0.1,0.7,0.1}


\newcommand{\bbB}{{\ensuremath{\mathbb B}} }

\newcommand{\bbE}{{\ensuremath{\mathbb E}} }

\newcommand{\bbH}{{\ensuremath{\mathbb H}} }

\newcommand{\bbN}{{\ensuremath{\mathbb N}} }

\newcommand{\bbP}{{\ensuremath{\mathbb P}} }

\newcommand{\bbR}{{\ensuremath{\mathbb R}} }
\newcommand{\bbS}{{\ensuremath{\mathbb S}} }

\newcommand{\bbZ}{{\ensuremath{\mathbb Z}} }

\renewcommand{\epsilon}{\varepsilon}
\newcommand{\eps}{\varepsilon}


\newcommand{\ga}{\alpha}
\newcommand{\gb}{\beta}

\newcommand{\gep}{\varepsilon}       

\newcommand{\go}{\omega}

\newcommand{\gs}{\sigma}

\newcommand{\dd}{\mathrm{d}}

\newcommand{\cG}{{\ensuremath{\mathcal G}} }

\newcommand{\cF}{{\ensuremath{\mathcal F}} }

\newcommand{\cL}{{\ensuremath{\mathcal L}} }

\newcommand{\bP}{{\ensuremath{\mathbf P}} }

\newcommand{\bE}{{\ensuremath{\mathbf E}} }

\newcommand{\cR}{{\ensuremath{\mathcal R}} }


\newtheorem{theorem}{Theorem}[section]
\newtheorem{lemma}[theorem]{Lemma}
\newtheorem{conjecture}[theorem]{Conjecture}
\newtheorem{proposition}[theorem]{Proposition}

\newtheorem{remark}[theorem]{Remark}
\newtheorem{claim}[theorem]{Claim}
\newtheorem{definition}[theorem]{Definition}

\renewcommand{\tilde}{\widetilde}
\renewcommand{\hat}{\widehat}

\DeclareFontFamily{U}{mathx}{}
\DeclareFontShape{U}{mathx}{m}{n}{<-> mathx10}{}
\DeclareSymbolFont{mathx}{U}{mathx}{m}{n}
\DeclareMathAccent{\widehat}{0}{mathx}{"70}
\DeclareMathAccent{\widecheck}{0}{mathx}{"71}
\renewcommand{\check}{\widecheck}

\newcommand{\sumtwo}[2]{\sum_{\substack{#1 \\ #2}}} 

\definecolor{dgreen}{RGB}{30,140,60}

\newcommand{\IG}{\mathrm{IG}}

\newcommand{\R}{\mathbb{R}}


\def\ggg{{\mathcal G}}
\def\indic{{{\mathbbm 1}}}
\def\Id{{\hbox{Id}}}
\def\demi{{\frac12}}
\def\eqlaw{\stackrel{\hbox{law}}{=}}

\newtheorem{theoA}{Theorem}

\newtheorem{lemA}[theoA]{Lemma}

\newtheorem{defA}[theoA]{Definition}


\title[A random polymer approach to the VRJP]{A random polymer approach to the weak disorder phase of the Vertex Reinforced Jump Process}

\author[Q. Berger]{Quentin Berger}
\address{Universit\'e Sorbonne Paris Nord, Laboratoire d'Analyse, G\'eom\'etrie et Applications, CNRS UMR 7539, 99 Av. J-B Clément, 93430 Villetaneuse, France and Institut Universitaire de France} \email{quentin.berger@math.univ-paris13.fr}

\author[A. Legrand]{Alexandre legrand}
\address{Universit\'e Claude Bernard Lyon 1,
Institut Camille Jordan, CNRS UMR 5208, 43, Boulevard du 11 novembre 1918,
69622 Villeurbanne Cedex, France} \email{legrand@math.univ-lyon1.fr}

\author[R. Poudevigne-Auboiron]{R\'emy Poudevigne-Auboiron}
\address{Sorbonne Université, Laboratoire de Probabilités, Statistique et Modélisation, 4, place Jussieu, 75252 PARIS CEDEX, FRANCE } \email{remypoudevigne@gmail.com}

\author[C. Sabot]{Christophe SABOT}
\address{Universit\'e Claude Bernard Lyon 1,
Institut Camille Jordan, CNRS UMR 5208, 43, Boulevard du 11 novembre 1918,
69622 Villeurbanne Cedex, France and Institut Universitaire de France} \email{sabot@math.univ-lyon1.fr}

\begin{document}

\begin{abstract}
In this paper, we study the transient phase of the Vertex Reinforced Jump Process (VRJP) in dimension $d\geq 3$.
In \cite{SZ19}, the authors introduce a positive martingale and show that the VRJP is recurrent if and only if that martingale converges to~$0$.
On $\mathbb{Z}^d$, $d\ge 3$, with constant conductances $W$, 
it can be shown that there is a critical value $0<W_c(\bbZ^d)<\infty$, such that the martingale converges to $0$ if $W<W_c(\bbZ^d)$ or to a positive limit if $W>W_c(\bbZ^d)$. 
On the other hand, the VRJP martingale can be interpreted as the partition function of a non-directed polymer with a very specific $1$-dependent random potential. 
In this paper, we focus on
the question of the $L^p$ integrability of the VRJP martingale, which is related to the (diffusive) behavior of the VRJP.
First, taking inspiration from the work of Junk~\cite{J22cmp} for directed polymers in $\mathbb{Z}^{1+d}$, we prove that on the half-space $\mathbb{H}_d$ of $\mathbb{Z}^d$, for all $W>W_c(\mathbb{H}_d)$ there is some $\delta>0$ such that the VRJP martingale is in $L^{1+\delta}$.
Second, we prove that, in dimension $d\geq 4$, the VRJP martingale is in $L^{p}$ for all $p>1$ above the \textit{slab critical point} $W_c^{\rm slab} (\mathbb{Z}^d) = \lim_{m\to\infty} W_c(\mathbb{Z}^{d-1} \times \llbracket -m,m\rrbracket)$.
We also propose some related conjectures.
\end{abstract}
\maketitle

\section{Introduction}
\label{sec_intro}

The Vertex Reinforced jump Process (VRJP) is a continuous-time reinforced process living on the vertices of a weighted non-directed graph. It was proposed by Werner and first investigated by Davis and Volkov (\cite{dv1}). It has received a considerable attention in the last 10 years, after it was shown in \cite{ST15} that it is closely related to the Edge Reinforced Random Walk on the one hand, and to the $\bbH^{2|2}$ supersymmetric hyperbolic sigma model (\cite{zirnbauer91, DSZ10}) on the other hand. In this paper we investigate the $L^p$ integrability of a martingale intimately related to the VRJP. As an introduction, we motivate this question by the case of the graph $\bbZ^d$, although some of our results deal with the case of the half-space or of so-called slabs. 

\subsection{Definition of the VRJP}

Let us first remind the definition of the VRJP. Consider $\ggg=(V,E)$ a non-directed connected graph, with finite degree at each vertex and let $(W_{e})_{e\in E}$ be a set of positive conductances on the edges. The VRJP, in linear time scale, is the continuous time process $(Y_s)_{s\ge 0}$ with state space~$V$, which conditionally on the past at time $s$, if $Y_s=x$, jumps to vertex $y$ with rate
\begin{eqnarray}\label{def_VRJP}
\indic_{(x,y)\in E}  W_{x,y} (1+L_y(t)),
\end{eqnarray}
where 
$$
L_y(s)=\int_{0}^s \indic_{Y_u=y} \, du
$$ 
is the local time of the process. 

The VRJP was introduced as a continuous-time counterpart of the Vertex Reinforced Random Walk, but it is in fact related to the linearly Edge Reinforced Random Walk in the following way: if $(W_e)$ are taken random as independent Gamma random variables with positive parameters $(\alpha_e)$, then the annealed law of the discrete time process associated with the VRJP is a ERRW with initial weights $(\alpha_e)$ (see \cite[Theorem~1]{ST15}). 
Besides, it was also shown in \cite[Theorem~2]{ST15} that on finite graphs and after a proper time change, the VRJP is partially exchangeable and can be represented as mixture of Markov Jump Processes. 
The mixing measure is explicit and is in fact the distribution of the first marginal of the $\bbH^{2|2}$ supersymmetric hyperbolic sigma model, introduced and investigated in \cite{DSZ10,zirnbauer91}. 
The relation with the $\bbH^{2|2}$ model was instrumental to understand the asymptotic behavior of the VRJP and ERRW. However, we will not need it directly in this paper, and we will rather focus on a different, although closely related, representation of the VRJP by a random Schr\"odinger operator, which we now describe.

\subsection{The random Schr\"odinger operator associated with the VRJP}

The random Schr\"odinger operator representation was introduced in \cite{STZ17} for finite graphs and in \cite{SZ19} for infinite graphs. 
Let us focus for now on the $\bbZ^d$ case: the graph $\ggg$ is the lattice graph, i.e. $V=\bbZ^d$ and $\{x,y\}\in E$ if $\vert x-y\vert_1=1$. We simply write $x\sim y$ if $\{x,y\} \in E$. We also take constant conductances, $W_{x,y}=W>0$ for all $x\sim y$. 
We denote by $P$ the incidence operator on $\bbZ^d$, 
i.e. for all $f\in \bbR^V$, 
$$
P f(x)= \sum_{y\sim x}  f(y), \;\;\; \forall x\in V.
$$
If $(\beta_x)_{x\in V}$ is a function on the vertices, called potential below, we define
$$
H_\beta=\beta -W P,
$$
where $\beta$ is the operator of multiplication by the potential $\beta$. In \cite{STZ17,SZ19}, a random potential $(\beta_x)_{x\in V}$ is constructed on any locally finite graph with conductances. It can be characterized by its Laplace transform which in the $\bbZ^d$ case has the following form
\begin{equation}\label{Laplace_intro}
\bbE\left[ e^{-\demi\left<\lambda, \beta\right>}\right] ={\frac{e^{ - W\sum_{x\sim y}   (\sqrt{1+\lambda_{x}} \sqrt{1+\lambda_y}-1) }}{ \prod_{x\in V}  \sqrt{1+\lambda_x}}},
\end{equation}
for all non-negative function $\lambda\in \bbR_+^V$ with finite support.  The random potential $\beta$ has the following properties:
\begin{itemize}
\item
For any finite subset $V'\subset V$, the restriction $(H_\beta)_{| V'\times V'}$ is positive definite.
\item
The random potential is $1$-dependent, \textit{i.e.}\ for any subsets $V_1$ and $V_2$ that are such that $\hbox{dist}_\ggg(V_1,V_2)\ge 2$, then $\beta_{|V_1}$ and $\beta_{|V_2}$ are independent.
\end{itemize}
We collect below in Section~\ref{sec_preliminaries} other useful properties of the potential~$\beta$ and of the operator~$H_\beta$. In particular, the law of the potential~$\beta$ has some form of stability by restriction and conditioning, which are instrumental in the present paper. Another key property is a convex monotonicity discovered by Poudevigne \cite{Poudevigne24}, see Section~\ref{sec_preliminaries} for details.

\subsection{The martingale and polymer approach}
Let us now introduce a martingale associated with the potential $\beta$ on an infinite graph $\ggg$. Consider first an increasing sequence of finite subsets $V_n\subset V$, which exhaust $V$, \textit{i.e.}\ $V_n\subsetneq V_{n+1}$ and $\cup_{n\ge 0} V_n =V$.  
We define a positive function $(\psi_n(x))_{x\in V}$ given by the partition function of a non-directed polymer in random potential $-\ln \beta$ starting at $x$: we let $\psi_n(x)=1$ if $x\notin V_n$ and for $x\in V_n$
\begin{equation}\label{polymer_intro}
\psi_{n}(x)=\sum_{\sigma \colon x \xrightarrow{V_n} V_n^c}
 W^{|\sigma|}\prod_{i=0}^{\vert \sigma\vert -1} \beta_{\sigma_i}^{-1} \,,
\end{equation}
where the sum runs on all finite (nearest-neighbor) paths $\sigma=(\sigma_0, \ldots, \sigma_{\vert \sigma \vert})$ on the graph which go from $x$ to $V_n^c$, \textit{i.e.}\ such that $\sigma_0=x$, $\sigma_i\in V_n$ for $0\le i\le \vert\sigma\vert-1$ and $\sigma_{\vert \sigma \vert}\in V_n^c$. 
The fact that $(H_\beta)_{V_n\times V_n}$ is positive definite implies that this sum is finite (see Section~\ref{sec_preliminaries}); note in particular that $(H_{\beta}\psi_n)(x) =0$ for all $x\in V_n$. 
Consider now the filtration $(\cF_{n})$ defined by
$$
\cF_{n}=\sigma\{\beta_x, \; x\in V_n\}.
$$
Then~\cite[Theorem~1]{SZ19} proves that for all $x\in V$, $(\psi_{n}(x))_{n\geq 0}$ is a positive $(\cF_{n})$-martingale. Hence, the partition function of the non-directed polymer in potential $-\ln\beta$ has the same martingale property as for directed polymers in random environment; we refer to Section~\ref{sec_DirPol} for a more detailed comparison.
Note however that, in the present case, the martingale property is a consequence of the specific form of the law of the potential, it is in fact related to the restriction/conditioning property of that law (whereas it is directly embedded in the i.i.d.\ structure of the environment for directed polymers). 
We define
$$
\psi_\infty(x)=\lim_{n\to \infty} \psi_n(x),
$$
the a.s.\ limit of the positive martingale. Note that, by construction, $\psi_\infty$ satisfies $H_\beta(\psi_\infty)=0$.

\begin{remark}
  In the following, we will refer to the $\bbZ^d$-VRJP martingale when we consider the definition~\eqref{polymer_intro} of $\psi_n$ on the graph $V=\bbZ^d$, with the exhaustion $V_n = \llbracket -n,n \rrbracket^{ d}$ and $x=0$.
\end{remark}


\subsection{Phase transition: weak vs. strong disorder}

The limit of the martingale~$\psi_\infty$ enters into the representation of the VRJP as a mixture of Markov jump processes on infinite graph (see Section~\ref{sec_preliminaries} for details). In particular, $\psi_\infty$ characterizes the recurrence/transience of the VRJP. Indeed, we have the following dichotomy, see \cite[Proposition~3]{SZ19}:
\begin{itemize}
\item
either $\psi_{\infty}(x)=0$ a.s.\ for all $x$ and the VRJP is a.s.\ recurrent;
\item
or $\psi_{\infty}(x)>0$ a.s.\ for all $x$ a.s. and the VRJP is a.s.\ transient.
\end{itemize} 
It is of course enough to look at $\psi_n(0)$ and we write $\psi_n=\psi_n(0)$ for simplicity. Translated into polymer terminology, $\psi_\infty=0$ corresponds to the strong disorder phase and $\psi_\infty>0$ to the weak disorder phase. Besides, if $(\psi_n)_{n\geq 0}$ is bounded in~$L^3$, then the time changed VRJP satisfies an invariance principle (see the end of Section~\ref{sec_beta_inifinite} for a justification).

Poudevigne's monotonicity principle \cite[Theorem~5]{Poudevigne24} implies that for any positive convex function $f$,  $\bbE(f(\psi_n))$ is a non-increasing function of the parameter $W$. 
This implies in particular that we can define the critical point
\[
  W_c=W_c(\bbZ^d) = \inf\big\{ W, \; \psi_{\infty} >0 \text{ a.s.} \big\} \in [0,\infty ] \,,
\]
which is such that $\psi_\infty>0$ a.s.\ for $W >W_c$ and  $\psi_\infty=0$ a.s. for $W<W_c$. 

Now, it is known that in dimension $d=1,2$ the VRJP is recurrent for all $W$, hence $W_c(\bbZ^d)=\infty$; this is proved in \cite{dv1} for $d=1$ (and \cite{Pemantle} for the ERRW) and \cite{Sabot21,Kozma-Peled} for $d=2$ (and \cite{SZ19} for the ERRW). 
In dimension $d\ge 3$ however, the localization/delocalization result of \cite{DS10,DSZ10} implies that $0<W_c(\bbZ^d)<\infty$.
Additionally it is know (\cite{Rapenne}) that the martingale is uniformly integrable for $W>W_c(\bbZ^d)$.



\section{Statement of the results}\label{results}

In this paper we are interested in the $L^p$ integrability of the martingale in the weak disorder phase. This is motivated by its relation with the asymptotic behavior of the VRJP and by the fact that it is an important question in the case of directed polymer models. 

Our first result consider the half-space $\bbH_d=\{x\in \bbZ^d, \;\; x_d\geq 0\}$, which allows us to write a renewal type equation for the VRJP martingale, making it closer to that of the directed polymer. 
We prove that for $d\ge 3$ and for all $W>W_c(\bbH_d)$, the martingale $(\psi_n)_{n\geq 0}$ is bounded in $L^{1+\delta}$ for some $\delta>0$. 
We conjecture that the same result holds for~$\bbZ^d$ itself.

Our second result considers the full-space $\bbZ^d$, in dimension $d\geq 4$. 
By comparing $\bbZ^d$ to a stack of ``slabs'', we prove that the $\bbZ^d$-VRJP martingale $\psi_n$ is bounded in $L^p$ for all $p\geq 1$ above the so-called \textit{slab critical point} $W_c^{\rm slab} = \lim_{m\to\infty} W_c (\bbZ^{d-1} \times \llbracket -m,m \rrbracket)$. 
We conjecture that this also holds above the $\bbZ^d$ critical point.

\subsection{Main results I: moments of order \texorpdfstring{$1+\delta$}{larger than 1} on the half space}

Our first result deals with the half-space of $\bbZ^d$, namely
\[
  V:=\bbH_d = \{ x=(x_1,\ldots, x_d) \in \bbZ^d, x_d \geq 0\} = \bbZ^{d-1} \times \bbZ_+.
\]
We take constant conductances $W_{x,y}=W$ for all $x\sim y$. The potential $\beta=(\beta_x)_{x\in V}$ is defined by its Laplace transform, with the same formula \eqref{Laplace_intro} given in the introduction.

Instead of looking at the partition function of the $-\ln \beta$ polymer on finite boxes, it is more convenient to consider it on the slab of width $n$, $\bbS_{n}:=\{x\in \bbH_d, \;\; 0\le x_d\le n-1\} = \bbZ^{d-1}\times \llbracket 0,n-1\rrbracket$. 
We consider the outer boundary of the slab $\bbS_{n}$, denoted $\partial_n^+ := \{x\in \bbH_d, x_d = n \}$, and we define,
\begin{equation}
\label{def:Mn}
M_{n} = \sum_{\sigma \colon 0\xrightarrow{\bbS_{n}} \partial^+_n} W^{|\sigma|} \prod_{i=1}^{|\sigma|-1} \beta_{\sigma_i}^{-1} \,,
\end{equation}
where $\sigma:0\xrightarrow{\bbS_{n}} \partial^+_n$ means that $\sigma=(\sigma_0, \cdots, \sigma_{\vert\sigma\vert})$ is a finite path in $\bbH_d$, joining $0$ to $\partial^+_n$ and such that $\sigma_i\in \bbS_{n}$ for all $i\le \vert\sigma\vert-1$. 
In Section~\ref{sec_Hd}, we prove that the sum $M_n$ is finite and that it is the limit of partition functions on finite boxes, see Lemma~\ref{lem_Mmn}. 

Note that we have changed the name of the martingale from $\psi_n$ to $M_n$, to use similar notations as in~\cite{J22cmp} (and to highlight the fact that we work on the half-space). 
We also do not write the dependence on $W$ to lighten notation. 


The results recalled in the introduction about the $\bbZ^d$ case easily extends to the case of the half-space $\bbH_d$; details are given in Section~\ref{sec_Hd}. 
In particular, we have that $(M_n)_{n\geq 0}$ is a non-negative martingale with respect to the filtration $\cF_n =\sigma\{\gb_x, \; x\in \bbS_n\}$, and so it converges almost surely to a limit $M_{\infty}=\lim_{n\to\infty} M_n$.
A zero-one law also shows that the limit verifies either $M_{\infty} =0$ a.s.\ or $M_{\infty}>0$ a.s.
Additionally, Poudevigne's monotonicity in $W$ (see \cite{Poudevigne24}, or Section~\ref{sec_monotonicity} below) enables us to obtain the following phase transition: there is some 
\[
  W_c=W_c(\bbH_d) = \inf\big\{ W, \; M_{\infty} >0 \text{ a.s.} \big\} \in [0,+\infty] \,,
\] 
such that
\begin{equation}
\label{eq:phasetransition}
\bbP\text{-a.s.} \qquad \lim_{n\to\infty} M_n = M_{\infty}
\begin{cases}
=0 & \quad \text{ if } W<W_c \,, \\
>0 & \quad \text{ if } W>W_c \,.
\end{cases}
\end{equation}
As on $\bbZ^d$, one has $W_c=+\infty$ in dimension $d=1,2$ but $0<W_c <+\infty$ in dimension $d\geq 3$.
Besides, \cite[Theorem~1]{Rapenne} shows that $(M_n)_{n\geq 0}$ is uniformly integrable $W>W_c$; in other words, the critical point $W_c$ is also the critical point for the uniform integrability of the martingale $(M_n)_{n\geq 0}$, \textit{i.e.}
\[
W_c = \inf\big\{  W >0,\; (M_n)_{n\geq 0} \text{ is uniformly integrable}\big\} \,.
\]
Our goal is to improve this result and show that, for the half-space martingale, the martingale converges also in $L^{p}$ convergence for some $p>1$ as soon as $W>W_c$.

\begin{theorem}
\label{th:moment}
Let $d\geq 3$.
For any $W$ such that $\lim_{n\to\infty} M_n = M_{\infty}>0$ a.s., in particular for any $W>W_c$, there is some $\delta>0$ such that the convergence holds in $L^{1+\delta}$.
In other words, we have this new characterization for $W_c$:
\[
W_c = \inf\Big\{W,\; \exists \delta>0 \,, \; \sup_{n\geq 0} \bbE[ (M_{n})^{1+\delta} \big]  <+\infty \Big\} \,.
\]
The same result holds if we rather consider $M_n$ as the partition function of polymers on sequence of increasing \emph{finite} boxes $(V_n)_{n\ge 0}$ such that $\cup_n V_n=\bbH_d$, as in \eqref{polymer_intro}.
\end{theorem}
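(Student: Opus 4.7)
The plan is to adapt Junk's strategy \cite{J22cmp} for directed polymers to the non-directed VRJP on the half-space. The half-space setting is amenable to such an approach because the $d$-th coordinate provides a natural ``time-like'' direction: every path contributing to $M_n$ in \eqref{def:Mn} must cross each intermediate hyperplane $\{x_d=k\}$ before reaching $\partial_n^+$, which enables a renewal-type decomposition reminiscent of the directed polymer case.

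The first step is to decompose $M_{n+m}$ for $0<n<m$ by splitting paths at the hyperplane $\partial_n^+$. Using the operator-theoretic representation of $M_n$ in terms of the Green's function of $H_\beta$ restricted to slabs, one would obtain a factorization of the schematic form
\[
M_{n+m} \;=\; \sum_{y\in \partial_n^+} M_n(0,y)\,\widetilde{M}_m(y) + (\text{boundary correction}),
\]
where $M_n(0,y)$ is the polymer sum over paths ending at $y$, and $\widetilde M_m(y)$ is the analogous quantity in the shifted slab $\bbZ^{d-1}\times\lint n,n+m\rint$. Crucially, the $1$-dependence of $\beta$ implies that, after inserting a thin buffer layer around $\partial_n^+$, the two factors are independent conditionally on the values of $\beta$ on the buffer.

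The second step is to turn this decomposition into a recursion for the fractional moment $I_\delta(n):=\bbE[M_n^{1+\delta}]$ with $\delta>0$ small. Using the sub-additivity $(\sum_i a_i)^{1+\delta}\le \sum_i a_i^{1+\delta} + (\text{cross terms})$, together with conditional independence and the explicit formula for $\bbE[M_n(0,y)]$ in terms of the Green's function, I would aim for an estimate $I_\delta(n+m)\le C(\delta,m)\, I_\delta(n)$ with $C(\delta,m)\to 1$ as $\delta\to 0$. The hypothesis $M_\infty>0$ a.s., combined with uniform integrability from \cite{Rapenne}, yields concentration of $M_n$ around $\bbE[M_n]=1$ in the bulk, which should force the cross terms to contribute negligibly once $\delta$ is chosen small enough, and thus close the induction so that $\sup_n I_\delta(n)<\infty$.

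The main obstacle is the non-directed nature of the paths: unlike in Junk's i.i.d.\ directed polymer setting, a path may revisit intermediate hyperplanes many times, producing loops and excursions that complicate both the renewal decomposition and the control of the cross terms. The mere $1$-dependence of $\beta$ (as opposed to full independence) further forces the use of thickened buffer layers and introduces additional error contributions. To overcome this, I expect to rely heavily on Poudevigne's convex monotonicity in $W$ \cite{Poudevigne24}, which controls convex functionals of the polymer partition function under perturbations of the conductances, together with the restriction/conditioning stability of the law of $\beta$ recalled in Section~\ref{sec_preliminaries}. Finally, the passage from the slab martingale $M_n$ to the box-exhausted version claimed in the last sentence of the theorem would follow by a standard comparison argument between slab and box partition functions, exploiting the monotonicity of $\psi_n$ under modifications of the exhausting sequence.
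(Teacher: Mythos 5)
Your renewal decomposition at intermediate hyperplanes and your reliance on Poudevigne's convex monotonicity and the restriction/conditioning stability of $\beta$ are indeed the right ingredients, but the core mechanism you propose for getting $L^{1+\delta}$-boundedness out of them is not the one that works, and it contains a genuine gap. Recursion on fractional moments of the form $I_\delta(n+m)\le C(\delta,m)\,I_\delta(n)$ with $C(\delta,m)\to 1$ as $\delta\to 0$ is vacuous: for a martingale one automatically has $I_\delta(n)\to 1$ as $\delta\to 0$ for every $n$, regardless of whether weak or strong disorder holds, so ``$C$ close to $1$ for small $\delta$'' does not distinguish the phases. What you would need is $C(\delta,m)\le 1$ for some fixed $\delta>0$, and nothing in your sketch produces this; in particular ``concentration of $M_n$ around $1$'' from mere uniform integrability gives no quantitative control of the upper tail, which is exactly where the danger lies. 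The paper's argument instead introduces a stopping time $\tau_t=\inf\{n:M_n>t\}$ and reduces the question to controlling the \emph{overshoot} $\bbE[(M_{\tau})^2\,\indic_{\{\tau_t<\infty\}}]\le Ct^2\,\bbP(\tau_t<\infty)$ (Lemma~\ref{lem:oveshoot}), combined with the a priori input that $\bbE[\sup_n M_n]<\infty$ in weak disorder (Proposition~\ref{prop:domination}). That a priori input is obtained by invoking Junk's Theorem~2.1-(i) of \cite{J22cmp}, which converts the convex-ordering condition
\[
\bbE\bigl[f(M_{k+\ell}/M_k)\mid\cF_k\bigr]\le\bbE[f(M_\ell)]
\]
into integrability of the supremum; this is the pivotal step your proposal omits entirely.

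Two further structural points. First, you do not need ``thin buffer layers'' to decouple the two slabs: the exact conditioning formula (Lemma~\ref{condionning_Hd}) shows that, given $\cF_k$, the field on $\bbH_d\setminus\bbS_k$ has law $\nu^{\check W}$ with $\check W\ge W$, and then Poudevigne's monotonicity gives
\[
\bbE\bigl[f(\check M_\ell(z))\mid\cF_k\bigr]\le\bbE[f(M_\ell)]
\]
\emph{exactly}, with no approximation error and no error terms from a buffer. Second, the overshoot lemma ultimately reduces to second-moment estimates on the one-step ratio $\cR=M_{k+1}/M_k$ (Lemma~\ref{lem:ratio}, Claims~\ref{claim:1}--\ref{claim:2}), whose proof exploits the explicit inverse-Gaussian structure of the conditional law of the $\beta$-potential; this is the model-specific technical content that your sketch leaves unaddressed and that cannot be replaced by a soft concentration argument.
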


We also state some corollary of the proof of Theorem~\ref{th:moment}, which gives a contrasting property of the martingale in the strong disorder phase.

\begin{proposition}
  \label{prop_SD}
  For any $W$ such that $\lim_{n\to\infty} M_n = M_{\infty}=0$ a.s., in particular for any $W<W_c$, there is a constant $c>0$ such that for any $t>1$
  \[
  \bbP\Big( \sup_{n\geq 0} M_n \geq t \Big) \in \left[ \frac{c}{t}, \frac1t\right] \,.
  \]
\end{proposition}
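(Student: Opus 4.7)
The upper bound $\bbP(\sup_n M_n\ge t)\le 1/t$ is Doob's (Ville's) maximal inequality applied to the non-negative $\cF_n$-martingale $(M_n)$ with $\bbE[M_0]=1$.

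For the lower bound, introduce the first-passage time $\tau:=\inf\{n\ge 0\colon M_n\ge t\}$, so that $\{\sup_n M_n\ge t\}=\{\tau<\infty\}$. The first step is the identity
\[
  \bbE\bigl[M_\tau\,\mathbf{1}_{\{\tau<\infty\}}\bigr]=1.
\]
I would derive this from the fact that $\tau\wedge n$ is a bounded stopping time, so optional stopping gives $1=\bbE[M_{\tau\wedge n}]=\bbE[M_\tau\mathbf{1}_{\{\tau\le n\}}]+\bbE[M_n\mathbf{1}_{\{\tau>n\}}]$. Monotone convergence takes the first summand to $\bbE[M_\tau\mathbf{1}_{\{\tau<\infty\}}]$ as $n\to\infty$, while in the second summand $M_n\mathbf{1}_{\{\tau>n\}}\le t$ is bounded and tends to $0$ a.s.\ (using the assumption $M_n\to M_\infty=0$ a.s.), so dominated convergence kills it.

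The lower bound then reduces to an \emph{overshoot estimate}
\[
  \bbE\bigl[M_\tau\,\mathbf{1}_{\{\tau<\infty\}}\bigr]\le C\,t\,\bbP(\tau<\infty)
\]
for some constant $C$ independent of $t$; combined with the identity above, this gives exactly $\bbP(\tau<\infty)\ge 1/(Ct)$. Stated in terms of the conditional law of $M_\tau$ given $\{\tau<\infty\}$, the overshoot estimate says that the overshoot factor $M_\tau/t$ (bounded below by $1$) has expectation of order one.

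The main obstacle is the overshoot estimate: the crude tail bound $\bbP(M_\tau\ge s,\tau<\infty)\le 1/s$ coming from Doob applied to the stopped martingale $M_{\cdot\wedge\tau}$ is too weak, since it fails to be integrable above $t$ and hence by itself gives no usable control on $\bbE[M_\tau\mathbf{1}_{\{\tau<\infty\}}]$. To obtain a sharper control, I would extract from the proof of Theorem~\ref{th:moment} a one-step conditional moment estimate of the form $\bbE\bigl[(M_{k+1}/M_k)^{1+\eta}\,\big|\,\cF_k\bigr]\le C$, uniformly in $k$ and for some $\eta>0$, which is where the polymer representation of $M_n$ and the $1$-dependence of the potential $\beta$ enter. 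Combined with the conditional Markov inequality at the first-passage time (and using $M_{\tau-1}<t$ to convert the bound on the ratio into a bound on $M_\tau/t$), this upgrades to a tail estimate $\bbP(M_\tau\ge Kt\mid\tau<\infty)\lesssim K^{-(1+\eta)}$, which integrates to the desired $\bbE[M_\tau\mid\tau<\infty]\le Ct$. Crucially, this one-step ratio estimate is a local property of the environment and does not use the weak-disorder hypothesis of Theorem~\ref{th:moment}, which is why the argument carries over to the strong-disorder regime considered here.
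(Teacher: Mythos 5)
Your high-level plan for the lower bound is actually a genuine improvement in structure over the paper's argument: the paper proceeds by contradiction (re-running the $L^p$-moment bootstrap of Theorem~\ref{th:moment} to show that $\bbP(\tau_t<\infty)\le\gep t^{-1}$ for some fixed $t$ would force convergence in $L^p$), whereas you observe directly that the assumption $M_\infty=0$ a.s.\ upgrades the optional-stopping \emph{inequality} to the \emph{identity} $\bbE[M_\tau\indic_{\{\tau<\infty\}}]=1$, after which a linear-in-$t$ overshoot estimate finishes the job in one line. Your derivation of this identity (optional stopping at $\tau\wedge n$, monotone convergence for the stopped part, dominated convergence with the bound $M_n\indic_{\{\tau>n\}}\le t$ for the running part) is correct, and your remark that the overshoot machinery is a local property of the environment that does not use weak disorder is also correct: the paper's Lemma~\ref{lem:oveshoot} and Lemma~\ref{lem:ratio} are proved for all $W>0$.

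The gap is in how you propose to obtain the overshoot estimate. You plan to extract a one-step moment bound $\bbE[(M_{k+1}/M_k)^{1+\eta}\mid\cF_k]\le C$ and use conditional Markov to conclude $\bbP(M_\tau\ge Kt\mid\tau<\infty)\lesssim K^{-(1+\eta)}$. This step does not go through. On $\{\tau=k\}$ one has $M_{k-1}<t$ and $M_\tau\ge Kt$ is the event $\{\cR_k\ge Kt/M_{k-1}\}$, whereas $\{\tau=k\}$ itself is driven by $\{\cR_k\ge t/M_{k-1}\}$; both cutoffs depend on the random quantity $t/M_{k-1}$, so to compare them you would need a uniform conditional tail-ratio bound of the form
\[
\bbP\bigl(\cR_k\ge Ka\mid\cF_{k-1}\bigr)\le C\,K^{-(1+\eta)}\,\bbP\bigl(\cR_k\ge a\mid\cF_{k-1}\bigr)\qquad\text{uniformly in }a\ge 1\,,
\]
and a finite $(1+\eta)$-th moment does not imply such a uniform doubling property (a random variable supported on $\{2^{2^k}\}$ with mass $\asymp 2^{-(1+\eta)2^k}$ has finite $(1+\eta)$-th moment but arbitrarily bad tail ratios). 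In other words, ``finite moment of order $1+\eta$'' is strictly weaker than Condition~1 of~\cite{FJ23}, and it is the latter — expressed here through the inverse-Gaussian structure of the conditional ratio, Lemma~\ref{lem:claim2proof:IGlemma} — that powers the paper's Lemma~\ref{lem:oveshoot}. The fix is cheap: simply invoke Lemma~\ref{lem:oveshoot} as stated (its proof is indeed disorder-regime-independent), which gives $\bbE[(M_\tau)^2\indic_{\{\tau<\infty\}}]\le Ct^2\bbP(\tau<\infty)$, hence by conditional Cauchy--Schwarz $\bbE[M_\tau\indic_{\{\tau<\infty\}}]\le \sqrt{C}\,t\,\bbP(\tau<\infty)$, and your identity then yields the lower bound $\bbP(\tau<\infty)\ge 1/(\sqrt{C}\,t)$.
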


\subsection{Main results II: moments of all orders above the slab critical point}

For a fixed $m \geq 1$, consider the VRJP on the symmetric slab $V^{(m)}:=\bbZ^{d-1} \times \llbracket -m,m \rrbracket$.
The results recalled in the introduction about $\bbZ^d$ also holds here.
One can associate a VRJP martingale $(\psi_n^{(m)})_{n\geq 0}$ as in~\eqref{polymer_intro}, by considering the exhaustion $V_n^{(m)} :=\llbracket -n,n \rrbracket^{d-1} \times \llbracket -m,m \rrbracket$ (and $x=0$).
Then, for any $m\geq 1$, the martingale $(\psi_n^{(m)})_{n\geq 0}$ converges almost surely to a limit $\psi_{\infty}^{(m)}=\lim_{n\to\infty} \psi_n^{(m)}$, and  
there is some critical point:
\[
  W_c^{(m)}=W_c(\bbZ^{d-1}\times \llbracket -m,m \rrbracket) = \inf\big\{ W, \; \psi_{\infty}^{(m)} >0 \text{ a.s.} \big\} \in [0,+\infty] \,.
\] 
Poudevigne's monotonicity result~\cite{Poudevigne24} (see also Section~\ref{sec_monotonicity} below) shows that $W_c^{(m)}$ is non-decreasing in $m$, and also that $W_c(\bbZ^d) \leq W_c^{(m)} <+\infty$ in dimension $d\geq 4$, for any $m\in \bbN$.
In particular, we can define the so-called \textit{slab critical point}:
\begin{equation}
  \label{slabcritic}
  W_c^{\rm slab} = W_c^{\rm slab} (\bbZ^d) := \lim_{m\to\infty} W_c^{(m)}  = \inf_{m\geq 1} W_c^{(m)} \,.
\end{equation}
Our main result here is that, above the slab critical point, the $\bbZ^d$-VRJP martingale $\psi_n$ defined in~\eqref{polymer_intro} converges in $L^p$ for all $p\geq 1$.

\begin{theorem}
\label{thm:allmoments}
Let $d\ge4$ and $W>W_c^{\rm slab}$. Then the $\bbZ^d$-VRJP martingale $(\psi_n)_{n\geq 0}$ defined in~\eqref{polymer_intro} converges in $L^p$ for all $p\ge1$. 
In other words, we have 
\[
W_{c}^{\rm slab} \ge \inf\Big\{W,\; \forall p\ge1 \,, \; \sup_{n\geq 0} \bbE[ (\psi_{n})^{p} \big]  <+\infty \Big\} \,.
\]
\end{theorem}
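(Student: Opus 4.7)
My plan combines two ingredients: extending Theorem~\ref{th:moment} from the half-space to slabs $V^{(m)}=\bbZ^{d-1}\times\llbracket -m,m\rrbracket$, and then bootstrapping from slab $L^{1+\delta}$-bounds to full $L^p$-bounds on $\bbZ^d$ using a stacked-slab decomposition.

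First, I would adapt the proof of Theorem~\ref{th:moment} to slabs: for every $m\ge 1$ and every $W>W_c^{(m)}$, show that the slab martingale $(\psi_n^{(m)})_{n\ge 0}$ is bounded in $L^{1+\delta_m}$ for some $\delta_m=\delta_m(W)>0$. The slab shares with the half-space an essentially semi-directed geometry---it is unbounded in $d-1$ directions---so one can set up an analogous Junk-style renewal equation for paths exiting a transverse box $\llbracket -n,n\rrbracket^{d-1}\times\llbracket -m,m\rrbracket$ and rerun the argument used for Theorem~\ref{th:moment}. Since $W>W_c^{\rm slab}=\inf_m W_c^{(m)}$, one may then fix $m_0$ with $W>W_c^{(m_0)}$, so that the slab martingale $(\psi_n^{(m_0)})_n$ is bounded in $L^{1+\delta}$ for some $\delta>0$.

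Second, I would upgrade this slab $L^{1+\delta}$-bound to $L^p$-bounds on $\bbZ^d$ for every $p\ge 1$. The idea is to decompose $\bbZ^d$ into a stack of translated copies of $V^{(m_0)}$, separated in the $d$-th direction by buffer layers, so that by $1$-dependence of $\beta$ the restrictions of the potential to different slabs are independent. Using Poudevigne's convex monotonicity principle (Section~\ref{sec_monotonicity}) to compare the true $\bbZ^d$-VRJP martingale with an easier-to-analyze ``stacked'' object built from these independent slab martingales, the aim is to control $\psi_n$ in convex moments by a product of approximately independent mean-$1$ contributions of uniformly bounded $L^{1+\delta}$-norm. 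Exploiting independence across many slabs should then yield sub-exponential tails for $\log \psi_n$, and hence $L^p$-boundedness for every $p\ge 1$.

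The main obstacle lies in making this second step rigorous. Unlike the directed polymer setting, where paths progress monotonically through time slices and yield an immediate multiplicative structure, VRJP polymer paths can cross back and forth between slabs arbitrarily often and wander freely in the transverse directions, so no direct factorization of $\psi_n$ is available. The argument must instead go through Poudevigne's monotonicity to modify the graph into a more product-like structure at controlled convex-moment cost, which is a genuinely global graph comparison rather than local path surgery. The restriction $d\ge 4$ enters precisely here: only then is $W_c^{(m)}<\infty$ for every $m$, ensuring that the first ingredient produces a non-trivial slab $L^{1+\delta}$-regime to bootstrap from.
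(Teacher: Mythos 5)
Your high-level intuition---compare $\psi_n$ to a stacked-slab structure via Poudevigne's monotonicity and exploit the independence granted by $1$-dependence of~$\beta$---is the right starting point and matches the paper's strategy. However, both ingredients of your plan are off-target, and the second one as stated does not work.

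Your first ingredient (an $L^{1+\delta}$ analogue of Theorem~\ref{th:moment} on slabs) is neither needed nor straightforward. The renewal decomposition behind Theorem~\ref{th:moment} hinges on the half-space having a single directed coordinate with infinite, translation-invariant cross-sections $\partial_k^+$; on a finite-width slab $\bbZ^{d-1}\times\llbracket -m,m\rrbracket$, cutting along the box $\llbracket -n,n\rrbracket^{d-1}\times\llbracket -m,m\rrbracket$ does not yield a renewal structure, since the exit boundary spreads over all $d-1$ transverse faces rather than lying on a single hyperplane. Moreover, the paper only uses the \emph{non-degeneracy} of the slab martingale limit $\psi_\infty^{(m)}>0$ a.s.\ (Lemma~\ref{lem:nondegenerate}, a direct consequence of Theorem~\ref{Th_martingale} and transience), i.e.\ mere weak disorder above $W_c^{(m)}$---no $L^{1+\delta}$-integrability of the slab martingale enters at all.

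Your second step has a genuine gap: a product of approximately independent mean-$1$ factors with uniformly bounded $L^{1+\delta}$-norm does \emph{not} have $L^p$-bounded tails for any $p>1$, nor sub-exponential tails for its logarithm; the $L^{1+\delta}$-norm of such a product grows exponentially in the number of factors. The actual mechanism (Lemma~\ref{lem:allmoments:tildeG}) is a balance between two competing exponentials on a one-dimensional toy graph obtained by the graph comparisons: the $p$-th moment of the contribution from a polymer crossing $k$ consecutive slabs is $\asymp C_0^k$ (where $C_0$ involves moments of $\mathrm{IG}(1,\eps)$ factors and is independent of the threshold $\eta_0$), while the probability that the first $k$ i.i.d.\ slab weights $\tilde W_i\sim\mu_0$ are all below $\eta_0$ decays like $\eps_0^k$ with $\eps_0=\mu_0((0,\eta_0))$. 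Non-degeneracy of $\mu_0$ lets one shrink $\eta_0$ so that $\eps_0 C_0<1$ for any fixed $p$, making the geometric series converge. This tradeoff---not concentration of $\log\psi_n$---is what delivers the $L^p$ bounds, and the inverse-Gaussian computation that makes it explicit is precisely why the graph is reduced to a one-dimensional line with i.i.d.\ shortcuts to a cemetery.
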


Let us notice that in dimension $d=3$, the simple random walk is recurrent on all the slabs $\bbZ^{d-1} \times \llbracket -m,m \rrbracket$, so in particular we have that $W_c^{(m)} = +\infty$, again by Poudevigne monotonicity result.
We still believe that for sufficiently large $W$, the martingale $(\psi_n)_{n\geq 0}$ converges in $L^p$ for all $p\geq 1$, but new techniques are needed, even though the idea of the proof of Theorem~\ref{thm:allmoments} could be useful.

\subsection{Some related conjectures}

By monotonicity, we have that the critical point in the half space and the slab critical point~\eqref{slabcritic} are larger than that of the full space, \textit{i.e.}\ $W_c(\bbH_d), W_c^{\rm slab} (\bbZ^d) \geq W_c(\bbZ^d)$.
A natural question is to know whether these critical points coincide, and we conjecture that they do (at least in dimension $d\geq 4$ for the slab critical point).

\begin{conjecture}
\label{conj:space}
The critical point for $\bbZ^d$ is the same as for $\bbH_d$ in all dimension $d\geq 1$, namely $W_c(\bbZ^d) = W_c(\bbH_d)$.

In dimension $d\geq 4$, the critical point for $\bbZ^d$ is equal to the slab critical point, namely $W_c(\bbZ^d) = W_c^{\rm slab}(\bbZ^d) = \lim_{m\to\infty} W_c (Z^{d-1} \times \llbracket -m,m\rrbracket)$.
\end{conjecture}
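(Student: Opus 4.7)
In both statements the inequalities $W_c(\bbZ^d)\le W_c(\bbH_d)$ and $W_c(\bbZ^d)\le W_c^{\rm slab}(\bbZ^d)$ are already consequences of Poudevigne's convex monotonicity, as noted earlier. What is missing is the reverse inequality, which has the flavor of a Grimmett--Marstrand type statement: the full-space weak-disorder phase should already be detected at the slab (resp.\ half-space) scale. My plan would be to adapt the Grimmett--Marstrand strategy from supercritical Bernoulli percolation.

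The core would be a \emph{finite-size criterion}: I would try to establish the existence of a box-scale event $\cA_{L,\eta}$, depending only on the potential $\beta$ inside a cube of side~$L$, and of a small absolute constant $\epsilon_0>0$, such that whenever $\bbP(\cA_{L,\eta})\ge 1-\epsilon_0$ the slab martingale $\psi_\infty^{(L)}$ (resp.\ the half-space martingale $M_\infty$) is positive almost surely. A natural candidate for $\cA_{L,\eta}$ is a uniform lower bound of order $\eta>0$ on some ``crossing partition function'' of the $-\ln\beta$ polymer, from one face of the box to the opposite one. Since $\beta$ is $1$-dependent, disjoint boxes separated by distance $\ge 2$ give independent events, and a block renormalization combined with Liggett--Schonmann--Stacey stochastic domination should translate $\cA_{L,\eta}$ into supercritical Bernoulli percolation of ``good blocks'' in a slab of appropriate thickness, hence into a.s.\ positivity of a slab partition function.

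To verify the criterion when $W>W_c(\bbZ^d)$, I would combine $\psi_\infty(x)>0$ a.s.\ for all $x\in\bbZ^d$ with the uniform integrability of $(\psi_n)$ from \cite{Rapenne} to localize a positive fraction of the mass of $\psi_\infty$ on paths that remain in a large finite box; the restriction/conditioning properties of $\beta$ recalled in Section~\ref{sec_preliminaries} should then allow one to turn this into a high-probability statement about the law of $\beta$ restricted to a single box, i.e.\ into the event $\cA_{L,\eta}$.

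The main obstacle, I expect, is the renormalization step. Bernoulli percolation proofs of Grimmett--Marstrand rely crucially on \emph{sprinkling}: a small amount of extra edge density is used to bridge connections across scales. The VRJP offers two candidate surrogates, namely Poudevigne's monotonicity in $W$ (one may slightly enlarge $W$ at controlled cost) and the explicit conditional laws of $\beta$ given its values on a finite region, but turning either into a genuine sprinkling device is delicate, in particular because the object to be controlled is a positive partition function (a sum of products of $\beta_x^{-1}$) rather than a connectivity event. The half-space part of the conjecture in dimension $d=3$ is not covered by this slab strategy, since slabs are recurrent in $d=3$, and would presumably require a separate reflection-type comparison between $\bbZ^3$ and $\bbH_3$ that exploits the transience of simple random walk on $\bbZ^3$.
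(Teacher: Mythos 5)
The statement you are addressing is labeled as a \emph{conjecture} in the paper; the authors give no proof of it, and the one direction that is known (namely $W_c(\bbZ^d)\le W_c(\bbH_d)$ and $W_c(\bbZ^d)\le W_c^{\rm slab}(\bbZ^d)$, via Poudevigne's monotonicity) is already stated there as a remark preceding the conjecture. So there is no proof in the paper to compare your argument against, and what you have written is not a proof either: it is a research outline that you yourself describe as a ``plan,'' with the decisive steps left as open problems.

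Concretely, the gaps are exactly the ones you flag, and they are substantive rather than routine. First, you do not define the box-scale event $\cA_{L,\eta}$ nor show that $\bbP(\cA_{L,\eta})\ge 1-\epsilon_0$ forces positivity of the slab martingale; the Liggett--Schonmann--Stacey reduction to supercritical Bernoulli percolation of good blocks tells you that good blocks percolate, but it does not by itself give you a lower bound on the polymer partition function, because $\psi_n$ is a sum of weighted paths through possibly bad blocks as well, and one must argue that good-block crossings contribute a quantity bounded away from $0$ uniformly in $n$. Second, the sprinkling surrogate is genuinely missing: Grimmett--Marstrand uses a small increase in $p$ to glue local seeds across scales, and while you correctly identify monotonicity in $W$ and the conditional structure of $\beta$ as the two candidate levers, you do not show how either produces the needed gluing estimate for a multiplicative functional like $\psi_n$ rather than for a connectivity event. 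Third, the step ``uniform integrability plus $\psi_\infty>0$ should localize a positive fraction of the mass in a finite box'' is the content of the finite-size criterion, not a derivation of it. Finally, as you note, the slab strategy says nothing about the $d=3$ half-space statement, which is also part of the conjecture. In short: correct diagnosis that the reverse inequality is a Grimmett--Marstrand-type problem, and a reasonable sketch of a program, but none of the load-bearing lemmas are established.
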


\noindent
Another conjecture, supported by Theorem~\ref{thm:allmoments} and Conjecture~\ref{conj:space} is that above the critical point, the martingale is bounded in $L^p$ for any $p>1$.
Let us note that by monotonicity, the property that the martingale is bounded in $L^p$ is monotonous in $W$, so that there exists some critical point $W_c^{(p)}$ for the $L^p$ convergence (see Remark~\ref{rem_Lp} for details): we therefore believe that $W_c =W_c^{(p)}$ for all $p>1$ in dimension $d\geq 3$, as is the case on trees (see~\cite{Rapenne}).

\begin{conjecture}
\label{conj:moment}
In any dimension $d\geq 3$, for any $W>W_c$, the convergence holds in $L^p$ for any $p>1$.
In other words, 
\[
W_c = \inf\Big\{W,\; \sup_{n\geq 0} \bbE[ (\psi_{n})^{p} \big]  <+\infty \Big\} =: W_c^{(p)} \,.
\]
\end{conjecture}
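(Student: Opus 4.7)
Applying Poudevigne's monotonicity principle to the convex functions $x\mapsto x^p$ shows that $W\mapsto \sup_n\bbE[\psi_n^p]$ is non-increasing, so the critical parameter $W_c^{(p)}$ is indeed well-defined. The inequality $W_c^{(p)}\geq W_c$ is immediate, since $L^p$-boundedness for some $p>1$ implies uniform integrability and hence $\psi_{\infty}>0$ a.s.; the content of Conjecture~\ref{conj:moment} is the reverse inequality $W_c^{(p)}\leq W_c$, which I would attack differently in dimensions $d\geq 4$ and $d=3$.

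In dimension $d\geq 4$, my plan is to reduce the statement to Conjecture~\ref{conj:space}. Indeed, Theorem~\ref{thm:allmoments} already yields $W_c^{(p)}\le W_c^{\rm slab}$ for every $p\ge 1$, so it would suffice to show $W_c^{\rm slab}=W_c$. The natural tool is a sprinkling argument in the spirit of Grimmett--Marstrand: assuming $\psi_{\infty}>0$ a.s.\ on $\bbZ^d$ at some $W>W_c$, one would show that for $W'>W$ slightly larger, the slab martingale $\psi_{\infty}^{(m)}$ on a slab of width $m$ is still positive at conductance $W'$, provided $m$ is large enough. The extra conductance $W'-W$ would be used to enhance connectivity inside the slab and compensate for the removed directions, while Poudevigne's convex monotonicity together with the restriction and conditioning properties of $\beta$ would allow the comparison. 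One would likely need a coarse-graining step identifying ``good'' mesoscopic boxes on which the two polymer measures look alike.

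In dimension $d=3$, every slab is subcritical so Theorem~\ref{thm:allmoments} is unavailable and one must argue directly. I would first try to extend Theorem~\ref{th:moment} from $\bbH_3$ to $\bbZ^3$ via a reflection and coupling argument exploiting the $1$-dependence of $\beta$ to compare the full-space martingale with a product of two half-space martingales, yielding $L^{1+\delta}$ boundedness for some $\delta>0$ as soon as $W>W_c(\bbZ^3)$. The main obstacle --- and in my view the hardest part of the whole conjecture --- would then be to bootstrap from $L^{1+\delta}$ to arbitrary $L^p$. A natural template is the directed polymer approach of Junk~\cite{J22cmp}, where moments are controlled through a Doob decomposition of $\ln\psi_n$ combined with a multi-replica expansion of $\bbE[\psi_n^p]$ as a sum over $p$-tuples of paths, bounded according to their intersection pattern and decoupled via $1$-dependence on non-intersecting portions. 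Adapting such an analysis to the non-directed VRJP is non-trivial: paths are unbounded in length, can backtrack and revisit sites, and there is no obvious replacement for the diffusive transversal-fluctuation estimates that drive the directed case. I expect this last step to require genuinely new ideas.
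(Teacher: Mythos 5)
The statement you are addressing is a \emph{conjecture}: the paper does not prove it, and explicitly says so. There is therefore no ``paper's own proof'' to compare against, and your proposal is not a proof either --- it is a research plan, and you say as much when you write that the last step ``would require genuinely new ideas.'' That candor is welcome, but it means the proposal should not be presented as settling the statement.

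A few concrete gaps are worth naming. For $d\geq 4$, the reduction to Conjecture~\ref{conj:space} is logically correct (Theorem~\ref{thm:allmoments} gives $W_c^{(p)}\leq W_c^{\rm slab}$, so $W_c^{\rm slab}=W_c$ would finish), but the proposed Grimmett--Marstrand sprinkling is itself a substantial open problem in this setting. In percolation the slab criterion works because supercriticality can be recast as a \emph{local} geometric event amenable to coarse-graining and sprinkling; here the event $\{\psi_\infty>0\}$ is a statement about an a.s.\ limit of a martingale, and you do not indicate how to turn it into a finite-volume, essentially monotone ``good box'' criterion for the $\beta$-potential, nor how to patch good boxes across the 1-dependent field. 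This is exactly what makes Conjecture~\ref{conj:space} hard, and the proposal does not advance it.

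For $d=3$, the comparison of $\bbZ^3$ with ``a product of two half-space martingales'' has no mechanism behind it. The renewal structure underlying Theorem~\ref{th:moment} hinges on the layered geometry of $\bbH_d$ (Lemma~\ref{condionning_Hd}, the decomposition~\eqref{eq:decompoMn}); there is no analogous reflection that factorizes $\psi_n$ on $\bbZ^3$, and the 1-dependence of $\beta$ across a hyperplane does not by itself give a product bound because paths cross the hyperplane. More seriously, the directed-polymer template you invoke for bootstrapping from $L^{1+\delta}$ to all $L^p$ is contradicted by the very results the paper reviews in Section~\ref{sec_DirPol}: Junk and Lacoin show that in weak disorder $Z_n$ is bounded in $L^p$ only for $p<p^*(u)$ with $p^*(u)<\infty$, so a multi-replica expansion \`a la \cite{J22cmp} demonstrably does \emph{not} give all moments in the directed case. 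If Conjecture~\ref{conj:moment} is true, it is precisely because the VRJP behaves differently from directed polymers, and one should not expect imported directed-polymer estimates to close the gap; a genuinely model-specific argument is needed, as you suspect.
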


\subsection{Comparison with the directed polymer model}
\label{sec_DirPol}

Let us now comment on how results on the VRJP compare to those on the directed polymer model. 
We first introduce the model: consider $(S_n)_{n\geq 0}$ a simple (symmetric, nearest neighbor) random walk on $\bbZ^d$, with law denoted~$\bP$; the directed trajectory $(n,S_n)_{n\geq 0}$ is then interpreted as a directed polymer in $\mathbb{N}\times \bbZ^{d}$, \textit{i.e.}\ in dimension $1+d$.
Let also $(\go_{i,x})_{i\in \bbN,x\in \bbZ^d}$ be i.i.d.\ random variables (the environment) with law denoted $\bbP$, verifying $\lambda(u):=\log\bbE[e^{u \go}] <+\infty$ for all $u\in \bbR$.

Then, the directed polymer model is defined by the following Gibbs measures: for $n\in \bbN$ and $u >0$ (the inverse temperature), let
\[
\frac{\dd \bP_{n}^{u,\go}}{\dd \bP} (S) := \frac{1}{Z_n} \exp\Big( \sum_{i=1}^n\big(  u \, \go_{i,S_i} - \lambda(u) \big) \Big) \,,
\]
where $Z_n=Z_{n}^{u,\go}$ is the partition function of the model, that normalizes $\bP_{n}^{u,\go}$ to a probability measure.
The partition function can be written as
\[
Z_n = \bE\Big[\exp\Big( \sum_{i=1}^n u \go_{i,S_i} - \lambda(u) \Big) \Big]
= \sumtwo{\sigma \colon \text{ RW path}}{\text{of length }n} \frac{1}{2^n} \prod_{z\in \sigma} e^{u \go_z -\lambda(u)} \,, 
\] 
where the sum runs on \textit{directed} random walk paths $(\sigma_0,\ldots, \sigma_n)$ of length $n$ in $\bbN\times \bbZ^{d}$, that is $\sigma_i = (i,x_i)$ with $x_0=0$ and $|x_i-x_{i-1}|_1=1$ for all $0\leq i \leq n$. 
In particular, we see here that the partition function $Z_n$ has a similar flavor to the VRJP martingale on the slabs $\bbS_n$ defined in~\eqref{def:Mn}.

In fact, $(Z_{n})_{n\geq 0}$ is also a martingale, with respect to the filtration $\cG_n = \sigma\{ \go_{i,x}, i\leq n, x\in \bbZ^d\}$, as observed by~\cite{Bol89}.
We also have a similar phase transition as in~\eqref{eq:phasetransition} (the monotonicity being reversed): there is some $u_c \in [0,+\infty]$ such that 
\[
\bbP\text{-a.s.}\qquad 
\lim_{n\to\infty} Z_n = Z_{\infty}
\begin{cases}
=0 & \quad \text{ if } u>u_c \,, \\
>0 & \quad \text{ if } u<u_c \,.
\end{cases}
\]
The phase $\{u, Z_{\infty} =0\}$ is dubbed as the \textit{strong disorder} phase and $\{u, Z_{\infty} >0\}$ as the \textit{weak disorder} phase.
Also for directed polymers, we know that $u_c=0$ in dimension $d=1,2$ and that $u_c>0$ in dimension $d\geq 3$.
It is also known from~\cite{CV06} that the martingale is uniformly integrable for all $u<u_c$, namely $u_c= \sup\{u, \; Z_n \text{ is uniformly integrable}\}$.
We refer to~\cite{Com17} for a nice overview of the model and its phase transition, or to~\cite{Zyg24} for a more recent account.

Now, in dimension $d\geq 3$, a lot of progress has been made recently and let us review some of the results in relation with the VRJP.
First of all, contrary to what is expected for the VRJP martingale (see Conjecture~\ref{conj:moment}), in dimension $d\geq 3$ there exists a phase where weak disorder holds ($u<u_c$) but the convergence $\lim_{n\to\infty} Z_n =Z_{\infty}$ does not hold in $L^2$. More precisely, \cite{BT10,BGdH11,BS10,BS11} (see also~\cite[Theorem~B]{JL24a}) show that in any dimension $d\geq 3$,
\[
u_c > \sup\Big\{ u, \; \sup_{n\geq 0}\bbE\big[Z_{n}^2 \big] <+\infty \Big\}\,.
\]

By a series of recent works~\cite{FJ23,J22cmp,J23,J25,JL24a,JL24b,JL25}, Junk and his co-authors improved the understanding of the weak disorder phase, beyond the $L^2$ phase. 
First, in~\cite{J22cmp} Junk showed that, for a \textit{bounded} environment~$\go$ (the assumption is removed in~\cite{FJ23,JL24b}), in the weak disorder phase the martingale convergence holds in $L^p$ for some $p>1$.
After that, in~\cite{J23,JL24a,JL25,JL24b}, Junk and Lacoin obtained important results. We can summarize them in the following form:
\begin{itemize}
\item For any $u<u_c$ (weak disorder), the convergence holds in $L^p$ for any $p< p^*(u)$, where $u\mapsto p^*(u)$ is a continuous non-increasing function with $p^*(u)\geq 1+\frac2d$ (see \cite[Corollary~2.8]{JL24b}).
\item At $u=u_c$, then \textit{weak disorder} holds, $\lim_{n\to\infty} Z_n=Z_{\infty}>0$, and the convergence holds in $L^p$ for any $p< 1+\frac2d$ but not for $p>1+\frac2d$ (see \cite[Theorem~2.1]{JL24b}).
\item If $u>u_c$ (strong disorder), then not only $\lim_{n\to\infty} Z_n=0$ but $\lim_{n\to\infty} \frac1n \log Z_n <0$ (see \cite[Theorem~2.1]{JL25}); this is dubbed as \textit{very strong disorder}.
\end{itemize}

For the VRJP, even if the weak/strong disorder and localization/delocalization features are analogous, the picture should be a bit different than for directed polymers.
In fact, much less is known, but here is what one expects (recall the monotonicity in $W$ is reversed):
\begin{itemize}
  \item For any $W>W_c$ (weak disorder), the convergence $\lim_{n\to\infty} \psi_n=\psi_{\infty}>0$ should hold in $L^p$ for any $p>1$, see Conjecture~\ref{conj:moment}.
  \item At $W=W_c$, then \textit{strong disorder} should hold, \textit{i.e.}\ $\lim_{n\to\infty} \psi_n=0$.
  \item If $W<W_c$ (strong disorder), then not only $\lim_{n\to\infty} \psi_n=0$ but $\lim_{n\to\infty} \frac1n \log \psi_n <0$, which amounts to some \textit{exponential} localization. This is for instance an important conjecture to show that, in dimension $d=2$, $\lim_{n\to\infty} \frac1n \log \psi_n <0$ for any $W>0$.
\end{itemize}

All together, even if the phenomenology of the directed polymer model may be different than for the VRJP, we take inspiration from the recent works mentioned above.
In fact, our proof of Theorem~\ref{th:moment} follows the line of~\cite{J22cmp}, but with several adaptations due to the non i.i.d.\ and non directed character of the (polymer) martingale $M_n$ in our setting, see~\eqref{def:Mn}.

\subsection{Organisation of the rest of the paper}

\begin{itemize}
  \item In Section~\ref{sec_preliminaries}, we introduce some important notation and we review some definitions and properties of the VRJP (and of the $\beta$-potential), in particular extending the definition~\eqref{polymer_intro} of the martingale to infinite subsets.
  \item In Sections~\ref{sec_moments1}-\ref{sec_claims}, we prove Theorem~\ref{th:moment}. We focus on the main, general, steps of the proof (inspired by~\cite{J23}) in Section~\ref{sec_moments1}; then we deal with the technical claims that are specific to the VRJP in Section~\ref{sec_claims}.
  \item In Section~\ref{sec_allmoments}, we prove Theorem~\ref{thm:allmoments}, by a comparison argument between the VRJP on $\bbZ^d$ and on a stack of slabs $\bbZ^{d-1} \times \llbracket -m ,m \rrbracket$.
\end{itemize}

\section{Preliminary material and complements concerning the \texorpdfstring{$\beta$}{beta}-random potential}
\label{sec_preliminaries}

\subsubsection*{Some useful notation.} 

If $\cG=(V,E)$ is a countable graph and $U\subset V$ is a subset, for $x,y\in U$, we denote by
$$
\sum_{\sigma \colon x \xrightarrow{U} y} \,,
$$
the sum on finite paths $\sigma=(\sigma_0, \ldots, \sigma_{\vert \sigma\vert})$ on the graph $\ggg$ such that $\sigma_0=x$, $\sigma_i\in U$ for all $i\leq \vert \sigma \vert-1$ and $\sigma_{\vert \sigma\vert}=y$. 
We denote by $\partial^+_{U}=\{x\in U^c, \; \exists y\in U, \; x\sim y\}$ the outer boundary of $U$ and
$$
\sum_{\sigma \colon x \xrightarrow{U} \partial^+_U} \, ,
$$
the sum on finite paths $\sigma=(\sigma_0, \ldots, \sigma_{\vert \sigma\vert})$ on the graph $\ggg$ such that $\sigma_0=x$, $\sigma_i\in U$ for $i\leq  \vert \sigma\vert-1$ and $\sigma_{\vert \sigma\vert}\in \partial^+_U$.

Finally, if $N=(N_{x,y})_{x,y\in V}$ is a $V\times V$ matrix (or associated operator), we denote by $N_{U,U'}$ its restrictions to the subsets $U$ and $U'$. If $\eta=(\eta_x)_{x\in V}$ is a vector we denote by $\eta_U$ its restriction to~$U$.

\subsection{The \texorpdfstring{$\beta$}{beta}-potential on finite sets: definition, restriction and conditioning}
\label{sec_beta_inifinite}

Consider a finite set $V$ and a $V\times V$ real symmetric matrix $W$ with non-negative coefficient. We denote by $\ggg=(V,E)$ the associated graph  with positive coefficients, \textit{i.e.}\ $E$ is the set of edges $\{i,j\}$ such that $W_{i,j} > 0$. For $\beta=(\beta_i)_{i\in V} \in \bbR^V$ a function on the vertices we define
$$
H_\beta=\beta-W,
$$
where $W$ is the operator on $\bbR^V$ defined by $Wf(x)=\sum_{y\in V} W_{x,y} f(y)$ and $\beta$ is the operator of multiplication by $\beta$. 
We will call $\beta$ the ``potential'' and consider $H_\beta$ as a Schr\"odinger operator on $V$ with bond conductances $W_{i,j}$. 
We write $H_\beta>0$ to denote that $H_\beta$ is positive definite, and  we denote $G_\beta=\left(H_\beta\right)^{-1}$ its Green function in that case.
By classical results about $M$-matrices (see \cite[Chapter~6, Theorem~2.3]{Berman}), we have that $G_\beta$ has non-negative coefficients, and in fact the coefficients are positive if $\ggg$ is connected.

\subsubsection*{Law of the $\beta$-potential}

The following definition was introduced in \cite{STZ17}, and extended to the case of a boundary term~$\eta$ by Letac and Wezolowski, \cite[Theorem~2.2]{LW20}  (see also \cite[Lemma~4]{SZ19}). 
\begin{defA}\label{beta}
Let $\eta=(\eta_i)_{i\in V}\in \bbR_+^V$. Then the distribution on $\bbR^V$ given by
 \begin{align}  \label{nu_eta}
\nu^{W,\eta}_V(d\beta)
=
\frac{1}{\sqrt{2\pi}^{|V|}} \frac{\mathds{1}_{H_{\beta}>0}}{\sqrt{\det H_{\beta}}} e^{-\demi\left< 1, H_\beta 1 \right>-\demi \left<\eta, G_\beta \eta \right> +\left<\eta,1 \right>} d\beta
\end{align}
is a probability measure; the term \(1\) in the scalar products \(\left< 1,H_{\beta}1\right>\) and \(\left< \eta, 1\right>\) is to be understood as the vector with only ones, \textit{i.e.}\ $(1)_{i\in V}$.
The Laplace transform of $\nu^{W,\eta}_V$ is, for any \(\lambda\in \mathbb{R}_{+}^{V}\)
\begin{equation}
  \label{eq:laplace-nubetaweta}
\int e^{-\demi \left< \lambda,\beta \right>} \nu^{W,\eta}_V(d\beta)=e^{-\left< \eta,\sqrt{\lambda+1}-1 \right>-\sum_{i\sim j}W_{i,j}\left( \sqrt{(1+\lambda_{i})(1+\lambda_{j})}-1 \right)}\prod_{i\in V}\frac{1}{\sqrt{1+\lambda_{i}}} \,,
\end{equation}
where \(\sqrt{\lambda+1}-1\) should be considered as the vector \((\sqrt{\lambda_i+1}-1)_{i\in V}\).

As a simple consequence of the expression of the Laplace transform, we have that if $\beta = (\beta_i)_{i\in V}$ is a random variable with distribution $\nu_V^{W,\eta}$, then
\begin{itemize}
\item $\beta$ is 1-dependent, i.e.\ if $V_1,V_2$ are subsets of $V$ such that $\hbox{dist}_\ggg(V_1,V_2)\ge 2$, the restriction $\beta_{V_1}$ and $\beta_{V_2}$ are independent.
\item
$1/\beta_i$ has the inverse Gaussian law $\IG(\frac{1}{ \eta_i + \sum_j W_{i,j}}, 1)$.
\end{itemize}
\end{defA}

Let us stress that, compared to \cite{STZ17,SZ19}, we have changed $2\beta$ for $\beta$, which explains the extra $\demi$ in $e^{-\demi \left< \lambda,\beta \right>}$ in the Laplace transform~\eqref{eq:laplace-nubetaweta}, and some change in the normalizing constant in~\eqref{nu_eta}.

\begin{remark}
If $\eta=0$, we recover the Laplace transform given in the introduction, see \eqref{Laplace_intro}. 
The extra parameter $\eta$ appears naturally when we consider restriction of the law to a subset: this is why it is very convenient to consider the larger family of probabilities $\nu_V^{W,\eta}$ for $\eta\in \bbR^V_+$. 
\end{remark}

\subsubsection*{Relation to the VRJP}

The random potential $\beta$ is related to the VRJP on $V$, in the following way.
Consider $\beta=(\beta_i)_{i\in V}$ random and distributed according to $\nu_V^W$ (with boundary term $\eta=0$).
Then, conditionally on $\beta$, consider the Markov jump process $(X_t)$ starting at $i_0\in V$, and with jump rate from $i$ to $j$ (with $i\sim j$) given by
\begin{eqnarray}\label{VRJP-rep}
W_{i,j} \, \frac{G_\beta(i_0,j)}{G_\beta(i_0,i)} \,.
\end{eqnarray}
Then, the annealed law of the process $(X_t)$, \textit{i.e.} after taking the expectation with respect to $\beta$, is equal to the law of a time change of the VRJP starting at $i_0$. 
This was proved in \cite[Theorem~2]{ST15} in terms of the $\bbH^{2\vert 2}$-model and written in \cite[Theorem~3]{STZ17} in terms of $G_\beta$.

\subsubsection*{Restriction and conditioning formulas}

The family of probability distributions $\nu_V^{W,\eta}$ have a property of stability by restriction and conditioning, which will play an important role in the present paper. The following Lemma was found independently in \cite[Lemma~5]{SZ19} and \cite[Propositions~4.2 and~4.3]{LW20}.
\begin{lemA}
\label{lem:conditionalbeta} 
Assume $V$ is finite and let $U\subset V$. Under $\nu^{W,\eta}_V(d\beta)$, the following holds:
\begin{itemize}
  \item[(i)] The restriction $\beta_U$ is distributed according to $\nu^{W_{U,U},\hat\eta}_U$, where
  \begin{equation}
  \hat\eta\coloneqq \eta_U + W_{U,U^c} 1_{U^c}\,.
  \end{equation}
  \item[(ii)] Conditionally on $\beta_U$, its complementary part $\beta_{U^c}$ is distributed according to $\nu^{\check{W},\check{\eta}}_{U^c}$, where $\check{W}=(\check{W}_{i,j})_{i,j\in U^c}$ and $\check\eta\in(\bbR)^{U_c}$ are defined by
  \begin{equation}\label{eq:lem:conditionalbeta}
    \begin{split}
  \check{W} & \coloneqq W_{U^c,U^c} + W_{U^c,U}((H_\beta)_{U,U})^{-1}W_{U,U^c} \\
   \text{and}\qquad \check\eta & \coloneqq \eta_{U^c}+W_{U^c,U}((H_\beta)_{U,U})^{-1} \eta_U\,.
    \end{split}
  \end{equation}
\end{itemize}
\end{lemA}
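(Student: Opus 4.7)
The plan is to prove both statements simultaneously by direct computation on the density \eqref{nu_eta}, leveraging the Schur complement with respect to the block decomposition induced by $U\subset V$. Write $H_U := (H_\beta)_{U,U} = \beta_U - W_{U,U}$; the crucial observation is that the off-diagonal blocks of $H_\beta = \beta - W$ are $-W_{U,U^c}$ and $-W_{U^c,U}$, which do not depend on $\beta$. Consequently the Schur complement of $H_U$ in $H_\beta$ is exactly $\beta_{U^c} - \check W$, i.e.\ $H_{\beta_{U^c}}^{\check W}$ with $\check W$ as in~\eqref{eq:lem:conditionalbeta}; call this matrix $\check H_{U^c}$. Three standard block identities are then available: (a) $H_\beta > 0$ iff $H_U > 0$ and $\check H_{U^c} > 0$; (b) $\det H_\beta = \det H_U \cdot \det \check H_{U^c}$; and (c) an explicit block inversion giving $(G_\beta)_{U^c,U^c} = \check H_{U^c}^{-1}$, $(G_\beta)_{U,U^c} = H_U^{-1} W_{U,U^c} \check H_{U^c}^{-1}$, and $(G_\beta)_{U,U} = H_U^{-1} + H_U^{-1} W_{U,U^c} \check H_{U^c}^{-1} W_{U^c,U} H_U^{-1}$.

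Using (c) one computes directly
\[
\langle \eta, G_\beta \eta\rangle = \langle \eta_U, H_U^{-1}\eta_U\rangle + \langle \check\eta, \check H_{U^c}^{-1}\check\eta\rangle,
\]
with $\check\eta = \eta_{U^c} + W_{U^c,U} H_U^{-1} \eta_U$ exactly as in~\eqref{eq:lem:conditionalbeta}. The other quadratic form decomposes, after using Schur on $\check H_{U^c}$ for the $U^c$-block, as
\[
\langle 1, H_\beta 1\rangle = \langle 1_U, H_U 1_U\rangle - 2\langle 1_U, W_{U,U^c} 1_{U^c}\rangle + \langle 1_{U^c}, \check H_{U^c} 1_{U^c}\rangle + \langle W_{U,U^c}1_{U^c}, H_U^{-1} W_{U,U^c}1_{U^c}\rangle.
\]
Setting $\hat\eta = \eta_U + W_{U,U^c}1_{U^c}$ and expanding $\langle \hat\eta, H_U^{-1}\hat\eta\rangle$ and $\langle \hat\eta, 1_U\rangle$, the assembled exponent of $\nu^{W_{U,U},\hat\eta}_U(d\beta_U)\cdot \nu^{\check W,\check\eta}_{U^c}(d\beta_{U^c})$ coincides term-by-term with that of $\nu^{W,\eta}_V(d\beta)$; combined with (a) and (b) for the indicator and determinant factors, this shows the joint density factors as the product, which yields the conditional law (ii) and, by integrating out $\beta_{U^c}$, the marginal law (i).

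As an independent check for (i), one may also read it off from the Laplace transform~\eqref{eq:laplace-nubetaweta} by choosing $\lambda$ supported in $U$: the edge sum splits into an intra-$U$ contribution and a cross-boundary contribution that, using $\sqrt{1+\lambda_j}-1 = 0$ for $j\in U^c$, reduces to $\langle W_{U,U^c}1_{U^c}, \sqrt{1+\lambda_U}-1\rangle$, which combines with the boundary term carrying $\eta_U$ to produce the effective boundary $\hat\eta$, matching the Laplace transform of $\nu^{W_{U,U},\hat\eta}_U$.

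The main obstacle is essentially bookkeeping: tracking the quadratic and linear forms through the Schur decomposition and verifying the exact cancellations among the cross-terms $\langle \eta_U, H_U^{-1} W_{U,U^c}1_{U^c}\rangle$, $\langle 1_U, W_{U,U^c}1_{U^c}\rangle$, and so on. The key structural input that makes everything work is precisely that the off-diagonal blocks of $H_\beta$ are $\beta$-independent; this ensures that $\check W$ and $\check\eta$ depend only on $\beta_U$, and that the resulting conditional measure on $\bbR^{U^c}$ is again of the same form $\nu^{\cdot,\cdot}_{\cdot}$, keeping the family stable under restriction and conditioning.
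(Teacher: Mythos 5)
Your proof is correct: the Schur-complement factorization of the density \eqref{nu_eta} into $\nu^{W_{U,U},\hat\eta}_U(d\beta_U)\,\nu^{\check W,\check\eta}_{U^c}(d\beta_{U^c})$, using that the off-diagonal blocks of $H_\beta$ are $\beta$-independent so that $\check W$ and $\check\eta$ are functions of $\beta_U$ only, is exactly the standard argument. The paper does not reprove this lemma but cites \cite[Lemma~5]{SZ19} and \cite[Propositions~4.2--4.3]{LW20}, whose proofs proceed along the same lines as yours (your Laplace-transform cross-check of (i) is also the route taken in part of the literature), so there is nothing to add beyond noting that the second factor integrates to $1$ for each fixed $\beta_U$ because $\check W$ is symmetric non-negative and $\check\eta\ge 0$ (by non-negativity of $(H_U)^{-1}$), which you implicitly use.
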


An important remark in (ii) above is that by Schur's complement, we have 
$$
(G_\beta)_{U^c,U^c}= \left( (H_\beta)_{U^c,U^c}-W_{U^c,U}((H_\beta)_{U,U})^{-1}W_{U,U^c} \right)^{-1}= \left( \beta_{U^c}-\check W \right)^{-1}.
$$
Hence, by (ii), the restricted Green function $(G_\beta)_{U^c,U^c}$ has the same distribution as the Green function on $U^c$ with conductances $\check W$ and $\check \eta$.

\subsection{Extension of the \texorpdfstring{$\beta$}{beta}-potential to infinite sets}
\label{infinite-graph}

It is easy from the restriction property of Lemma~\ref{lem:conditionalbeta} to extend the definition of the $\beta$ potential to countable sets $V$. 
This was done in \cite[Section~2.2]{SZ19}, but we need here to slightly broaden the setup to include non locally finite graphs and boundary term $\eta$. 

Let $V$ be a countable set and $W=(W_{x,y})_{x,y\in V}$ be non-negative conductances such that $W_{x,y}=W_{y,x}$ and
\begin{equation}\label{finite_sum}
\sum_{y\in V} W_{x,y}<\infty, \;\;\; \forall x\in V.
\end{equation}
\begin{defA}\label{beta_infinite}
Let $\eta=(\eta_x)_{x\in V}\in \bbR_+^V$. There exists a unique probability distribution $\nu_V^{W,\eta}$ on $\bbR^V$ such that for all finite subset $U\subset V$, $\beta_{U}$ has distribution $\nu_{U}^{W_{U,U},\hat \eta_U}$, where
$$
\hat \eta_U=\eta_U + W_{U,U^c} 1_{U^c} \,.
$$
\end{defA}
\begin{proof}
The proof is similar to that in \cite{SZ19}. Take an increasing sequence of finite subsets $(V_n)_{n\ge 0}$ such that $\cup_n V_n=V$. Consider $\beta^{(n)}$ distributed as $\nu_{V_n}^{W_{V_n,V_n},\hat \eta_{V_n}}$. By \eqref{finite_sum} we see that $\hat \eta_{V_n}$ is finite for all $n$. By Lemma~\ref{lem:conditionalbeta}, it is easy to see that $\beta^{(n)}$ is a compatible sequence.
Hence, by Kolmogorov's extension theorem it gives a random variable $\beta\in \bbR^V$ such that $\beta_{V_n}\eqlaw \beta^{(n)}$. If $U\subset V$ is finite, then there is $n$ such that $U\subset V_n$ and it gives that $\beta_U$ has law $\nu_U^{W_{U,U}, \hat \eta_U}$ by restriction, as expected.
\end{proof}

\subsubsection*{Green function and martingale}

The martingale which is at the core of the paper was introduced in order to extend the representation of the VRJP in \eqref{VRJP-rep} to the case of infinite graphs. Consider now $\eta=0$ and $\beta = (\beta_x)_{x\in V}$ distributed according to $\nu_V^{W}$. We define as before
$$
H_\beta=\beta-W\,,
$$
where the operator $W$ on $\bbR^V$ is defined by $Wf(x)=\sum_{y\in V} W_{x,y} f(y)$ as soon as the function $f$ verifies $\sum_{y\in V} W_{x,y} \vert f(y)\vert<\infty$ for all $x$. 
Note that by Definitions~\ref{beta} and~\ref{beta_infinite}, for any finite subset $U\subset V$ we have $\left(H_\beta\right)_{U,U}>0$. Consider now an increasing sequence $(V_n)_{n\ge 0}$ of finite subsets such that $\cup_n V_n=V$, and define  
$$
\hat G^{(n)}_\beta= \left( \left(H_\beta\right)_{V_n,V_n}\right)^{-1} 
$$ 
the Green function of the operator $H_\gb$ restricted to $V_n$.
We sometimes extend $ \hat G^{(n)}_\beta$ to $V\times V$ by setting $0$ outside of $V_n$, \textit{i.e.}\ $\hat G^{(n)}_\beta (x,y)=0$ if $x$ or $y$ belongs to $V_n^c$. 
Since $\left(H_\beta\right)_{V_n,V_n}>0$, $\hat G^{(n)}_\beta$ has the following path expansion,
\begin{equation}\label{G_paths}
\hat G^{(n)}_\beta(x,y)= \sum_{\sigma \colon x\xrightarrow{V_n} y} \frac{ \prod_{i=0}^{\vert \sigma\vert -1} W_{\sigma_i,\sigma_{i+1}}}{\prod_{i=0}^{\vert \sigma\vert} \beta_{\sigma_i}}. 
\end{equation}

We also define $(\psi_{n}(x))_{x\in V}$ by
\begin{equation}\label{eq:def:psin}
\begin{cases}
\psi_{n}(x)=1,& \;\;\; \hbox{if $x\notin V_n$}\,,
\\
H_\beta(\psi_{n})(x)=0,& \;\;\; \hbox{if $x\in V_n$}\,.
\end{cases}
\end{equation}
It is easy to see from the positivity of $(H_\beta)_{V_n, V_n}$ that there is a unique such solution and that $\psi_{n}(x)$ can be represented as the partition function~\eqref{polymer_intro} of the non-directed polymer given in the introduction: if $x\in V_n$, 
$$
\psi_{n}(x)=\sum_{\sigma \colon x \xrightarrow{V_n} \partial^+_{V_n}} \left( \prod_{i=0}^{|\sigma| -1} \frac{W_{\sigma_i,\sigma_{i+1}}}{\beta_{\sigma_i}}\right) \,.
$$
From \eqref{G_paths}, we see that we can also write for $x\in V_n$,
$$
\psi_{n}(x)=(\hat G^{(n)} \hat \eta_{V_n})(x),
$$
where $\hat \eta_{V_n}$ is the boundary term obtained by restriction of the law of $\beta$ to $V_n$ (recall Lemma~\ref{lem:conditionalbeta}), \textit{i.e.}\ $\hat \eta_{V_n}(x)=\sum_{y\in V_n^c} W_{x,y}$ for all $x\in V_n$. The following is proved in \cite[Theorem~1]{SZ19}.

\begin{theoA}\label{Th_martingale}
For all $x,y\in V$, the sequence of random variables \((\hat{G}^{(n)}(x,y))_{n\geq 0}\) is non decreasing and converges a.s.\ to 
\[
\hat G_\beta(x,y):= \lim_{n\to \infty} \hat G_\beta^{(n)}(x,y).
\]
Moreover, $\nu_V^W$-almost surely, $0<\hat G_\beta(x,y)<\infty$ and the limit does not depend on the choice of the sequence of subsets $V_n$. 

Additionally, under the probability $\nu_V^W$, for all \(x\in V\), \((\psi_{n}(x))_{n\geq 0}\) is a positive \(\cF_{n}\)-martingale with $\cF_n=\sigma\{\beta_x, \; x\in V_n\}$. It converges a.s.\ to a
random variable \(\psi(x)\), such that $\psi(x)\ge 0$ a.s., and the limit does not depend on the choice of the
increasing sequence \((V_n)_{n\geq 0}\).
Besides, the quadratic variation of the vectorial martingale \((\psi_{n}(x))_{x\in V}\) is $\hat G^{(n)}(x,y)$ and
\(\psi_{n}(x)\) is bounded in~\(L^2\) if and only if \(\bbE_{\nu_V^W}(\hat G(x,x))<\infty\).
\end{theoA}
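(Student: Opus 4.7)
My plan is to prove the four assertions of Theorem~\ref{Th_martingale} in turn, leveraging the path expansion~\eqref{G_paths}, the boundary-value characterization~\eqref{eq:def:psin}, and crucially the restriction/conditioning identities in Lemma~\ref{lem:conditionalbeta}.

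First, the monotonicity of $(\hat G^{(n)}(x,y))_{n\ge 0}$ is immediate from~\eqref{G_paths}: all summands are non-negative, and the family of admissible paths staying in $V_n$ grows with $n$. Monotone convergence then gives the a.s.\ limit $\hat G_\beta(x,y)\in[0,+\infty]$, with strict positivity already from the $0$- or $1$-edge contribution. Non-dependence on the exhaustion $(V_n)$ follows from a standard interlacing argument: given two exhaustions, each $V_n$ is eventually contained in some $V_m'$ and vice-versa, so the two monotone limits coincide.

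The martingale property is the main step. From the boundary-value problem~\eqref{eq:def:psin} one reads $\psi_n|_{V_n}=\hat G^{(n)}\hat\eta_{V_n}$ with $\hat\eta_{V_n}(y)=\sum_{z\in V_n^c}W_{y,z}$. By Lemma~\ref{lem:conditionalbeta}(i), $\beta_{V_{n+1}}$ has law $\nu_{V_{n+1}}^{W,\hat\eta_{V_{n+1}}}$, and by part~(ii), conditionally on $\cF_n$ the block $\beta_{V_{n+1}\setminus V_n}$ follows $\nu^{\check W,\check\eta}_{V_{n+1}\setminus V_n}$ with $\check W,\check\eta$ given explicitly in terms of $\hat G^{(n)}$. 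A Schur-complement decomposition of $\hat G^{(n+1)}$ along $V_{n+1}=V_n\sqcup(V_{n+1}\setminus V_n)$, combined with the splitting $\hat\eta_{V_{n+1}}(y)=\hat\eta_{V_n}(y)-\sum_{z\in V_{n+1}\setminus V_n}W_{y,z}$ for $y\in V_n$, expresses
\[
\psi_{n+1}(x)=\psi_n(x)-[\hat G^{(n)}W_{V_n,V_{n+1}\setminus V_n}\mathbf{1}](x)+[\hat G^{(n)}W_{V_n,V_{n+1}\setminus V_n}\,\check G\check\eta](x).
\]
The collapse identity $\bbE[\check G\check\eta\mid\cF_n]=\mathbf{1}$---obtained by differentiating the normalization of~\eqref{nu_eta} with respect to the parameter $\eta$---cancels the last two terms, yielding $\bbE[\psi_{n+1}(x)\mid\cF_n]=\psi_n(x)$. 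Positive martingale convergence then gives the a.s.\ limit $\psi(x)\ge 0$, with non-dependence on $(V_n)$ by interlacing. The same conditioning argument applied to second moments gives
\[
\bbE\big[(\psi_{k}(x)-\psi_{k-1}(x))(\psi_{k}(y)-\psi_{k-1}(y))\mid\cF_{k-1}\big]=\hat G^{(k)}(x,y)-\hat G^{(k-1)}(x,y),
\]
so telescoping identifies the predictable quadratic variation at time $n$ with $\hat G^{(n)}(x,y)$; orthogonality of martingale increments then yields $\bbE[\psi_n(x)^2]=1+\bbE[\hat G^{(n)}(x,x)]$, and monotone convergence gives the $L^2$-boundedness criterion.

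The a.s.\ finiteness of $\hat G_\beta$ is arguably the most delicate point: one cannot simply control $\bbE[\hat G_\beta(x,y)]$ as it may be infinite in recurrent-like regimes. Instead, iterating the Schur-complement identity and exploiting the $1$-dependence of the $\beta$-field (so that the Schur-complement conditional law $\nu^{\check W,\check\eta}$ inherits the quasi-stochastic structure of its parent) produces uniform-in-$n$ tail bounds on $\hat G^{(n)}(x,y)$, from which finiteness follows. The technical heart of the whole argument is the Schur-complement bookkeeping combined with the collapse identity $\int\check G\check\eta\,\dd\nu^{\check W,\check\eta}=\mathbf{1}$; both make essential use of the very specific structure of the $\beta$-law (as opposed to that of a generic positive random potential), and this is precisely the place where the restriction/conditioning formulas of Lemma~\ref{lem:conditionalbeta} are indispensable.
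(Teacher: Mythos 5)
The paper does not prove Theorem~\ref{Th_martingale} itself; it cites \cite[Theorem~1]{SZ19}, so the comparison is against the argument in that reference. Your route for the martingale property is indeed the one used there: the Schur-complement decomposition of $\hat G^{(n+1)}$ along $V_{n+1}=V_n\sqcup(V_{n+1}\setminus V_n)$ combined with the identity $\int G_\beta\eta\,\dd\nu^{W,\eta}_U=\mathbf{1}$ (the ``collapse identity'' you extract by differentiating the normalization of~\eqref{nu_eta}) is exactly the mechanism behind \cite[Lemma~4 and Theorem~1]{SZ19}. The monotonicity of $\hat G^{(n)}$, the strict positivity, and the interlacing argument for independence of the exhaustion are all correct and standard. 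So the core of your proposal is sound.

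Two points to flag. First, a minor one: the increment identity
\[
\bbE\big[(\psi_{k}(x)-\psi_{k-1}(x))(\psi_{k}(y)-\psi_{k-1}(y))\mid\cF_{k-1}\big]=\hat G^{(k)}(x,y)-\hat G^{(k-1)}(x,y)
\]
cannot hold as written, because the right-hand side is not $\cF_{k-1}$-measurable. What the Schur-complement computation and the second normalization derivative actually give is
\[
\bbE\big[(\psi_{k}(x)-\psi_{k-1}(x))(\psi_{k}(y)-\psi_{k-1}(y))\mid\cF_{k-1}\big]=\bbE\big[\hat G^{(k)}(x,y)\mid\cF_{k-1}\big]-\hat G^{(k-1)}(x,y),
\]
equivalently that $\psi_n(x)\psi_n(y)-\hat G^{(n)}(x,y)$ is an $(\cF_n)$-martingale. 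This is enough: taking expectations and telescoping still gives $\bbE[\psi_n(x)\psi_n(y)]=1+\bbE[\hat G^{(n)}(x,y)]$ and hence the $L^2$-boundedness criterion by monotone convergence. But you should state the identity correctly.

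Second, and this is the genuine gap: your treatment of the almost-sure finiteness of $\hat G_\beta(x,y)$ is not an argument. You acknowledge it is the delicate point and then write that ``iterating the Schur-complement identity and exploiting the $1$-dependence\dots produces uniform-in-$n$ tail bounds,'' but you do not say how. The $1$-dependence of the field $\beta$ does not by itself yield tail control on the highly nonlinear functional $\hat G^{(n)}(x,y)$, and nothing in the Schur-complement bookkeeping you describe produces a bound $\sup_n\bbP(\hat G^{(n)}(x,y)>t)\le f(t)\to 0$. In fact this finiteness is exactly where \cite{SZ19} has to work hardest (it ultimately feeds into the representation of the VRJP on an infinite graph, with the additional $\mathrm{Gamma}(\tfrac12)$ variable $\gamma$ and the decomposition $G=\hat G_\beta+\tfrac{1}{2\gamma}\psi\otimes\psi$ recalled in Section~\ref{sec_beta_inifinite}); it is not a soft consequence of the ingredients you list. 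As it stands, this part of the proof would need to be supplied, either by following \cite[Section~2.2]{SZ19} or by another route.
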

Note that by construction, the function $\psi$ satisfies $H_\beta \psi=0$ and that $\hat G_\beta$ is a Green function in the sense that $H_\beta \hat G_\beta=\Id$. 

\subsubsection*{Relation with the VRJP}
The random variables $\hat G_\beta$ and $\psi$ enter in the representation of the VRJP on infinite graphs.
Even though it is not fully necessary in the present paper, it enlightens the relation between the potential $\beta$ and the VRJP.
Let
$$
G(x,y):=\hat G_\beta (x,y)+\frac{1}{2\gamma} \psi(x)\psi(y),\;\; x,y\in V^2
$$
where $\gamma$ is a $\mathrm{Gamma}(\demi)$ random variable, independent of $\beta$. Then, after some time change, the VRJP starting at $x_0$ is a mixture of Markov jump processes with jump rates from $x$ to $y$ given by
$$
W_{x,y} \,\frac{G(x_0,y)}{G(x_0,x)} \,.
$$

As explained in the introduction, the asymptotic behavior of the martingale $\psi_n$ is related to the recurrence or transience of the VRJP. More precisely, when the weighted graph $(\ggg, W)$ is vertex transitive, there is a zero-one law in the sense that either $\psi(x)=0$ a.s.\ for all $x\in V$, or $\psi(x)>0$ a.s.\ for all $x\in V$, see \cite[Proposition~3]{SZ19}. (We have a slightly weaker property for the half-space $\bbH_d$ but we give details in Section~\ref{sec_Hd}.) Then, \cite[Theorem~1]{SZ19} states that the VRJP is recurrent a.s.\ if $\psi=0$ and transient a.s.\ if $\psi>0$. Besides, the $L^p$ integrability of the martingale $\psi_n$ is also related to the asymptotic behavior of the VRJP. Indeed, if $\psi_n$ is bounded in $L^3$ then the VRJP satisfies a functional CLT. This is a consequence of the proof of \cite[Theorem~3]{SZ19} where it appears that the VRJP satisfies a functional CLT as soon as the limit martingale satisfies $\bbE(\psi(0)^2)<\infty$ and $\bbE(\psi(0)^{-2})<\infty$, while in \cite[Lemma~4.3]{Rapenne} it is proved that $\bbE(\psi_n(0)^{-2})=\bbE(\psi_n(0)^{3})$ for all $n$.


\subsection{Convex Monotonicity}\label{sec_monotonicity}

The following result of Poudevigne will play a central role in our arguments. We state it in the form we will need, which is in fact equivalent to \cite[Theorem~6]{Poudevigne24}.

\begin{theoA}\label{thm_monotonicity}
Let $\ggg=(V,E)$ be a countable graph, and consider $W^-$ and $W^+$ some conductances as in Section~\ref{infinite-graph} and such that, for all $x,y\in V$, $W^-_{x,y}\le W^+_{x,y}$. We denote by $\ggg_-$ and $\ggg_+$ their associated graphs. Then, there is a coupling $(\beta^-,\beta^+)$ such that $\beta^-\sim \nu_V^{W^-}$, $\beta^+\sim \nu_V^{W^+}$ such that for all $n\ge 1$ and for all $x\in V_n$ which is connected to $V_n^c$ in $\ggg_-$, 
$$
\bbE\left[ \psi^-_n(x) \mid \beta^+\right]=\psi^+_n(x),
$$
where $\psi^-$ and $\psi^+$ are the polymer partition functions associated with $(\beta^-,W^-)$ and $(\beta^+,W^+)$. 
In particular, for any non-negative convex function~$f$, we have that 
\[
  \bbE\left[ f\big(\psi^-_n(x) \big) \mid \beta^+\right] \geq f\big( \psi^+_n(x) \big) \,.
\]
\end{theoA}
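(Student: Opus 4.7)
The plan is to construct the coupling $(\beta^-,\beta^+)$ in finite volume first and then pass to the infinite-volume limit. For countable $V$, take an exhaustion $(V_n)_{n\geq 0}$ by finite subsets: by Lemma~\ref{lem:conditionalbeta}, the restrictions of $\nu_V^{W^\pm}$ to each $V_n$ are again of $\nu$-type (with modified boundary terms), so constructing compatible couplings on the $V_n$'s and applying Kolmogorov's extension theorem yields the infinite-volume coupling. Convergence of the finite-volume martingales to $\psi_n^\pm$ is provided by Theorem~\ref{Th_martingale}.

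On a finite graph, the idea is to perturb $W^-$ into $W^+$ one edge at a time, reducing to the case where $W^+-W^-$ is supported on a single edge $e_0=\{a,b\}$. Using the explicit density~\eqref{nu_eta} and Laplace transform~\eqref{eq:laplace-nubetaweta} of $\nu_V^{W,\eta}$, the goal is to construct $\beta^-$ from $\beta^+$ via an explicit randomization of the pair $(\beta_a,\beta_b)$ such that each marginal has the correct law and $\beta^+$ is recovered as a conditional expectation of $\beta^-$. The identity $\bbE[\psi_n^-(x)\mid \beta^+]=\psi_n^+(x)$ would then be established by plugging the coupling into the path representation
\[
  \psi_n^\pm(x) = \sum_{\sigma\colon x\xrightarrow{V_n}\partial^+_{V_n}}\prod_{i=0}^{|\sigma|-1}\frac{W^\pm_{\sigma_i,\sigma_{i+1}}}{\beta^\pm_{\sigma_i}},
\]
and checking that integrating out the single-edge perturbation correctly transforms the $-$ factors into the $+$ factors, path by path. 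The convex functional inequality is then immediate from Jensen's inequality, since $f$ is convex and non-negative and $\psi_n^+(x)=\bbE[\psi_n^-(x)\mid \beta^+]$.

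The main obstacle is the single-edge perturbation step: producing the correct auxiliary random variable coupling $(\beta^-_a,\beta^-_b)$ with $(\beta^+_a,\beta^+_b)$ so that the exact conditional expectation identity for $\psi_n$ holds, rather than merely a stochastic-domination statement. A promising route is to exploit the VRJP mixing representation~\eqref{VRJP-rep}: heuristically, raising $W_{e_0}$ corresponds to adding independent jump attempts across $e_0$ in the Markov-jump picture, so averaging over those extra jumps should transform the $-$ martingale into the $+$ martingale. Alternatively, one can attempt a direct computation with~\eqref{nu_eta}, exploiting that the extra conductance enters only quadratically in $\langle 1,H_\beta 1\rangle$ and through $\det H_\beta$, and writing the Radon--Nikodym derivative of $\nu_V^{W^+}$ with respect to $\nu_V^{W^-}$ as an integration kernel supported on $\{a,b\}$; the difficulty is that modifying $W_{e_0}$ simultaneously alters all Green function entries through $H_\beta^{-1}$, so even this ``local'' perturbation has a genuinely non-local effect on the martingale and must be handled with care.
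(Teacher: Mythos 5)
Your proposal takes a genuinely different route from the paper and, as written, it has a real gap. The paper does not reprove this monotonicity statement from first principles: it deduces it from \cite[Theorem~6]{Poudevigne24} by a short reduction. Namely, one collapses $V_n^c$ to a single vertex $\star$ to obtain a finite weighted graph $\tilde\cG_n$ (with conductances $\tilde W_{i,\star}=\sum_{j\sim i, j\in V_n^c}W_{i,j}$), observes that $\psi_n(i)=\tilde G_{\tilde\beta}(\star,i)/\tilde G_{\tilde\beta}(\star,\star)$, and then the conditional-expectation identity is exactly what Poudevigne's theorem asserts for ratios of Green functions under an increase of conductances on a finite graph. That reduction is the entire content of the paper's proof.

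What you are proposing is effectively to re-derive Poudevigne's coupling itself: reduce to a single-edge perturbation $W^+_{e_0}>W^-_{e_0}$ on a finite graph, construct an explicit randomization of $(\beta_a,\beta_b)$ from the density \eqref{nu_eta}, verify the conditional-expectation identity in the path expansion, and then extend to countable $V$ via Lemma~\ref{lem:conditionalbeta} and Kolmogorov's theorem. The outer shell of your plan (finite-volume reduction, compatibility and extension, Jensen at the end) is sound and roughly matches how such a construction would be organized. But the core step — exhibiting the single-edge coupling with the exact identity $\bbE[\psi_n^-(x)\mid\beta^+]=\psi_n^+(x)$ — is precisely where all the difficulty sits, and you explicitly leave it open (``the main obstacle''). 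It is not a cosmetic gap: that coupling is the theorem. Moreover, the intended verification ``path by path'' is not the right picture: increasing $W_{e_0}$ and integrating out the extra randomness redistributes mass between paths that cross $e_0$ and paths that do not, so the $-$ weight of an individual path does not transform into the $+$ weight of the same path; one only recovers equality after summing over all paths. Also note that a stochastic-domination coupling would not suffice here — the statement requires the exact martingale-type identity, from which the convex inequality follows by Jensen, and your final step correctly relies on having equality, not domination. As it stands the proposal is a reasonable high-level plan with its central lemma missing, whereas the paper simply cites the result.
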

Let us explain why Theorem~\ref{thm_monotonicity} is a consequence of  \cite[Theorem~6]{Poudevigne24}. Consider the graph $\tilde \cG_n$ obtained from $\cG$ by contracting all vertices of $V_n^c$ to a single vertex denoted $\star$: more precisely, let $\tilde \cG_n=(\tilde V_n:=V_n\cup\{\star\}, \tilde E_n)$ where $\tilde E_n$ is the set of edges included in $V_n$ to which we add the edges $\{i,\star\}$ if there exists $j\in V_n^c$ such that $i\sim j$. On the edge set $\tilde E_n$ we consider the conductances $(\tilde W_e)_{e\in \tilde E_n}$ such that $\tilde W_e=W_e$ if $e$ is included in $V_n$ and $\tilde W_{i,\star}=\sum_{j\sim i, j\in V_n^c} W_{i,j}$. On the graph $\tilde \cG_n$ we consider the random potential $(\tilde \beta_i)_{i\in \tilde V_n}$ with law $\nu_{\tilde V_n}^{\tilde W_n}$ and denote by $\tilde G_{\tilde \beta}(x,y)$ its associated Green function. Then, from Lemma~\ref{lem:conditionalbeta}~(i), the law of $\tilde \beta_{V_n}$ is the same as that of~$\beta_{V_n}$. Besides, it is easy to see that we have, coupling $\beta$ and $\tilde \beta$ so that $\beta_{V_n}=\tilde \beta_{V_n}$, 
$$
\psi_n(i)=\frac{\tilde G_{\tilde \beta}(\star,i)}{\tilde G_{\tilde \beta}(\star,\star)} \,.
$$
(See \cite[Lemma~3]{SZ19} for a more detailed proof.) Hence, Theorem~\ref{thm_monotonicity} above is a direct consequence of \cite[Theorem~6]{Poudevigne24}.

\subsection{The case of infinite subsets of \texorpdfstring{$\bbH_d$}{H\_d}}
\label{sec_Hd}

We fix now the graph $\ggg=(V, E)$ as the lattice graph on $\bbH_d$, \textit{i.e.}\ $V:=\bbH_d$. 
We take constant conductances $W_e=W$ for all edge $e\in E$ and define $\beta$ as the random potential with distribution $\nu_V^W$ of Definition~\ref{beta_infinite}, which coincides with the definition~\eqref{Laplace_intro} in the introduction.

\subsubsection*{Defining the slab martingale}
Recall the definition of the ``slab'' martingale~$M_n$ in~\eqref{def:Mn},
\[
  M_{n} = \sum_{\sigma \colon 0\xrightarrow{\bbS_{n}} \partial^+_n} W^{|\sigma|} \prod_{i=1}^{|\sigma|-1} \beta_{\sigma_i}^{-1} \,,
\]
where $\bbS_n = \bbZ^{d-1} \times \llbracket 0,n-1 \rrbracket$.
We now prove that the sum $M_n$ is finite (note that $\bbS_n$ is not a finite set) and that it is the limit of partition functions on finite boxes. 

More precisely, for $n,m\geq 1$, let 
$$
\bbB_{n,m}=\{x\in \bbH_d, \;\;  0\le x_d \le n-1, \; \vert x_i\vert \le m-1 \; \forall i = 1, \cdots, d-1\} \,,
$$
let $\partial^+_{n,m}=\{x\in \bbH_d\setminus \bbB_{n,m}, \; \exists y \in \bbB_{n,m}, \; x\sim y\}$ be its outer boundary, and let us define
\begin{equation}\label{M_mn}
M_{n,m} :=\sum_{\sigma \colon 0\xrightarrow{\bbB_{n,m}} \partial^+_{n,m}} W^{|\sigma|} \prod_{i=1}^{|\sigma|-1} \beta_{\sigma_i}^{-1}.
\end{equation}
In the last expression, the sum runs on finite paths $\sigma$ in $\bbH_d$ starting at $0$ and such that $\sigma_i\in \bbB_{n,m}$ for $i\le \vert \sigma\vert -1$ and $\sigma_{\vert \sigma \vert}\in \partial^+_{n,m}$.

We now have the tools to show (an improved version of) the convergence $\lim_{m\to\infty} M_{n,m} = M_n$ and some of the assertions of Section~\ref{results} concerning the $0$--$1$ law and critical parameters.
\begin{lemma}\label{lem_Mmn}
Fix $n$ a positive integer. Then, we have
$$
\lim_{m\to\infty} M_{n,m}=M_n
$$
a.s. and in $L^p$ for all $p\ge 1$.
As a consequence $M_n$ is a $\cF_n$-martingale with $\cF_n=\sigma\{\beta_x, \; x\in \bbS_n\}$ and the following limit exists 
$$ 
M_\infty=\lim_{n\to\infty} M_n \quad\text{a.s.}
$$
\end{lemma}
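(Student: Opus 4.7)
The pivotal observation is the identity
\[
M_{n,m} \;=\; \beta_0\, \psi_{\bbB_{n,m}}(0),
\]
where $\psi_{\bbB_{n,m}}$ is the polymer function from~\eqref{eq:def:psin} on the finite box $\bbB_{n,m}\subset \bbH_d$: the two path expansions differ only by the factor $\beta_{\sigma_0}^{-1}=\beta_0^{-1}$ coming from the starting vertex in the convention of~\eqref{polymer_intro}. The plan is first to show that $(\psi_{\bbB_{n,m}}(0))_{m\geq 1}$ is a positive martingale for $\cG_{n,m}:=\sigma\{\beta_x,\,x\in\bbB_{n,m}\}$: the identity $\bbE[\psi_{\bbB_{n,m+1}}(0)\mid \cG_{n,m}] = \psi_{\bbB_{n,m}}(0)$ is a purely local consequence of the conditional-distribution formula in Lemma~\ref{lem:conditionalbeta}(ii) applied to $\bbB_{n,m}\subset\bbB_{n,m+1}$; it does not require $(\bbB_{n,m})_m$ to exhaust $\bbH_d$ (one may always interleave it with a genuine exhaustion and invoke Theorem~\ref{Th_martingale}). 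Since $\beta_0\in\cG_{n,1}$, multiplying through shows that $(M_{n,m})_m$ is also a positive martingale, hence converges a.s.\ to some limit $\tilde M_n\geq 0$.

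Next I would identify $\tilde M_n$ with the series $M_n$ from~\eqref{def:Mn} by splitting $M_{n,m}=A_{n,m}+B_{n,m}$, according to whether the terminating step of $\sigma$ exits through the top face $\partial_n^+$ or through a lateral face of $\bbB_{n,m}$. Every path counted by $A_{n,m}$ is also counted by $M_n$, and conversely any fixed path counted by $M_n$ eventually lies in $\bbB_{n,m}$ for $m$ large enough; monotone convergence thus yields $A_{n,m}\uparrow M_n$, which shows in particular that the series defining $M_n$ is a.s.\ finite and that $\tilde M_n\geq M_n$. What remains is to show $B_{n,m}\to 0$.

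The hard part will be the uniform bound $\sup_m \bbE[M_{n,m}^p]<\infty$ for every $p\geq 1$. The finite vertical thickness $n$ is essential here: by Lemma~\ref{lem:conditionalbeta}(i) the restriction $\beta_{\bbS_n}$ has law $\nu^{W,\hat\eta_n}_{\bbS_n}$ with boundary term $\hat\eta_n(y)=W\,\indic_{y_d=n-1}$, which endows $H_\beta$ on the slab with an effective mass and controls the moments of the corresponding slab Green function. The plan is to combine this with the $1$-dependence of $\beta$ and the inverse-Gaussian moment estimates on $\beta_x^{-1}$ from Definition~\ref{beta} to bound the path-sum expansion of $\bbE[M_{n,m}^p]$ over $p$-tuples of paths in $\bbS_n$, uniformly in $m$. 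Once available, uniform integrability at every order upgrades the a.s.\ convergence to $L^p$-convergence; in particular $\bbE[B_{n,m}]=\bbE[M_{n,m}]-\bbE[A_{n,m}]\to 0$, so $B_{n,m}\to 0$ in $L^1$ and $\tilde M_n=M_n$.

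For the last two statements, I would apply the same local martingale argument but now to the increasing sequence $(\bbB_{n,m})_n$ for fixed $m$: this yields $\bbE[M_{n+1,m}\mid \cG_{n,m}]=M_{n,m}$. Passing to the limit $m\to\infty$, using $\cG_{n,m}\uparrow\cF_n$ together with the $L^1$-convergence established above, gives $\bbE[M_{n+1}\mid \cF_n]=M_n$; the a.s.\ existence of $M_\infty=\lim_n M_n$ then follows from positive martingale convergence. The only delicate step is the quantitative slab moment estimate; everything else is a formal consequence of the martingale structure provided by Lemma~\ref{lem:conditionalbeta}.
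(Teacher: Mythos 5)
Your proof has two genuine gaps, one of which you flag and one of which you do not.

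\textbf{The $L^p$ bound.} You rightly identify $\sup_m \bbE[M_{n,m}^p]<\infty$ as the delicate step, but the route you sketch (expanding $\bbE[M_{n,m}^p]$ over $p$-tuples of paths and controlling terms via $1$-dependence and inverse-Gaussian moments of $\beta_x^{-1}$, using $\hat\eta_n$ as an ``effective mass'') is quite far from obviously workable: the $\beta$-field is not a product measure, a single site can be visited arbitrarily many times by the $p$ paths (and $\bbE[\beta_x^{-k}]$ grows quickly in $k$), and it is not clear what would produce the needed uniform geometric decay in path length. The paper's argument is much shorter and of a completely different nature: Poudevigne's convex monotonicity (Theorem~\ref{thm_monotonicity}) lets one \emph{decrease} the conductances, which \emph{increases} $\bbE[f(M)]$ for convex $f$; comparing $\bbB_{n,m}$ with the finite one-dimensional segment $I_n$ from $0$ to $(0,\ldots,0,n)$ (conductance $W$ on $I_n$, $0$ elsewhere) gives $\bbE[(M_{n,m})^p]\le\bbE[(M_n^-)^p]<\infty$ in one line, uniformly in $m$. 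You should look for an argument of this kind rather than a direct moment expansion.

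\textbf{The identification $\tilde M_n=M_n$.} This is the more serious issue because you treat it as automatic, whereas it is the real content of the lemma and the paper devotes Appendix~\ref{appendix_A} to it. Your argument is circular: you write $\bbE[B_{n,m}]=\bbE[M_{n,m}]-\bbE[A_{n,m}]\to 0$, but $\bbE[M_{n,m}]\equiv 1$ while $\bbE[A_{n,m}]\uparrow\bbE[M_n]$, so the limit is $1-\bbE[M_n]$, and knowing this vanishes is exactly equivalent to knowing $\bbE[M_n]=1$, i.e.\ that no mass escapes through the lateral boundary. Uniform integrability only upgrades the a.s.\ convergences $A_{n,m}\to M_n$ and $B_{n,m}\to\tilde M_n-M_n\ge 0$ to $L^1$ convergences; it does not force $\tilde M_n-M_n=0$. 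A genuine input is needed: the paper proves it by showing (Lemma~\ref{M_n_VRJP}) that $\bbE[M_{n,m}^1/M_{n,m}]$ equals the probability that the VRJP started at $0$ exits $\bbB_{n,m}$ through its side, and then by a direct VRJP jump-rate estimate that the VRJP on $\bbH_d$ cannot remain forever in the slab $\bbS_n$, hence this exit probability vanishes as $m\to\infty$. You need something playing the role of this probabilistic input.
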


In other words, we somehow extend the definitions of Section~\ref{infinite-graph}, using an exhaustion of~$\bbH_d$ by an increasing sequence of \textit{infinite} subsets $V_n=\bbS_n$ such that $\cup_n V_n = \bbH_d$.
Then, the second convergence result of Lemma~\ref{lem_Mmn} is the analogue of Theorem~\ref{Th_martingale} in this context.

\begin{proof}
By Theorem~\ref{Th_martingale} applied to the sequence of finite boxes $\bbB_{m,n}$, we have that $(M_{n,m})_{m\ge 1}$ is a positive $(\cF_{n,m})_{m\geq 1}$-martingale with $\cF_{n,m}=\sigma\{\beta_x, \; x\in \bbS_{n,m}\}$. Thus, the almost sure limit $M_{n,\infty}= \lim_{m\to\infty} M_{m,n}$ exists and is non-negative. 
We now need to prove two things: that the martingale $(M_{n,m})_{m\ge 1}$ is bounded in $L^p$ for all $p>1$ and that $M_{n,\infty}=M_n$. 

The first part is a consequence of the monotonicity presented in Section~\ref{sec_monotonicity}. 
Indeed, let $z_n= (0, \cdots, 0, n)\in\partial^+_n$ and consider the subset $I_n$ of points on the segment between $0$ and $z_n$. 
Consider now $M_n^{-}$ the partition function of the polymer between $0$ and $z_n$ for conductances $W^-_{x,y}$ equal to $W$ for edges included in $I_n$ and $0$ outside. Then, by Theorem~\ref{thm_monotonicity}, for all $p\ge 1$, $\bbE((M_{n,m})^p)\le \bbE((M_n^{-})^p)$.
Besides, $\bbE((M_n^{-})^p)<\infty$ for all $p\ge 1$ since $I_n$ is a finite graph.

The last part of the proof, that $M_{n,\infty}=M_n$ a.s., is given in Appendix~\ref{appendix_A} since it is of a slightly different nature than the rest of the paper: it is based on the VRJP and is in fact equivalent to saying that the VRJP on $\bbH_d$ cannot stay an infinite time in the slab $\bbS_n$.

By the martingale property of Theorem~\ref{Th_martingale},  for $n\le n'$ and $m\le m'$ we have $\bbE(M_{n',m'}| \cF_{n,m})= M_{n,m}$. By the previous convergence it implies that $\bbE(M_{n'}| \cF_{n,m})= M_{n,m}$. Letting $m$ go to~$\infty$, we get that $\bbE(M_{n'}| \cF_{n})= M_{n}$, which proves that $(M_n)_{n\geq 0}$ is indeed a martingale.
\end{proof}

\subsubsection*{Phase transition}
We now state results on the phase transition on $\bbH_d$, which are easy extensions of the results already proved for $\bbZ^d$.  

\begin{lemma}
In dimension $d=1,2$ we have $M_\infty=0$ a.s.\ for any $W>0$.
In dimension $d\ge 3$ there is a critical parameter $W_c=W_c(\bbH_d) \in (0,+\infty)$ such that $M_\infty=0$ a.s. for $W<W_c$ and $M_\infty>0$ a.s. for $W>W_c$.
\end{lemma}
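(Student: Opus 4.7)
The statement splits into a zero-one law for $\{M_\infty > 0\}$, monotonicity in $W$ (yielding existence of $W_c$), and a dimension-dependent identification of $W_c$. For the zero-one law, Lemma~\ref{lem_Mmn} identifies $M_\infty$ with $\psi(0)$, where $\psi$ is the a.s.\ limit from Theorem~\ref{Th_martingale}; by the remark following that theorem, $\psi$ satisfies $H_\beta\psi=0$ a.s., equivalently $\beta_x\psi(x) = W\sum_{y\sim x}\psi(y)$ for every $x$. Since $\beta_x>0$ and $\psi\geq 0$, this forces $\{\psi(x)>0\} = \{\exists y\sim x,\psi(y)>0\}$, and by connectedness of $\bbH_d$ the event $A := \{M_\infty >0\}$ coincides with $\{\psi(x)>0 \text{ for all } x\in \bbH_d\}$. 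Since $A$ is invariant under the $\bbZ^{d-1}$-parallel translations of $\bbH_d$, while $\nu_{\bbH_d}^W$ is stationary and $1$-dependent (hence mixing) under these shifts, ergodicity gives $\bbP(A)\in\{0,1\}$.

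For monotonicity and the comparison with $\bbZ^d$, apply Theorem~\ref{thm_monotonicity} with the bounded convex function $f(x)=e^{-x}$: $W\mapsto \bbE[e^{-M_n^W}]$ is non-increasing, and bounded convergence passes this to $\bbE[e^{-M_\infty^W}]$. Combined with the zero-one law, $\{W:M_\infty^W>0 \text{ a.s.}\}$ is upward closed, and $W_c:=\inf\{W:M_\infty^W>0 \text{ a.s.}\}\in[0,+\infty]$ gives the phase transition. To transfer the recurrence of the VRJP on $\bbZ^d$ to $\bbH_d$, I would compare on $\bbZ^d$ the constant conductances $W^+\equiv W$ with $W^-=W$ on edges internal to $\bbH_d$ and $0$ on edges incident to $\bbZ^d\setminus\bbH_d$. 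By Lemma~\ref{lem:conditionalbeta}(i), $\beta^{W^-}|_{\bbH_d}$ has law $\nu_{\bbH_d}^W$, and $W^-$-paths from $0$ stay in $\bbH_d$, so the $\bbZ^d$-martingale at $0$ under $W^-$ coincides with $M_\infty^{\bbH_d}$. Poudevigne then yields $\bbE[e^{-M_\infty^{\bbH_d}}]\geq \bbE[e^{-\psi^{\bbZ^d}(0)}]$: recurrence of the $\bbZ^d$-VRJP in $d=1,2$ (\cite{dv1,Sabot21,Kozma-Peled}) gives $W_c(\bbH_d)=+\infty$ in those dimensions, while in $d\geq 3$ one obtains $W_c(\bbH_d)\geq W_c(\bbZ^d)>0$.

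The main obstacle is the upper bound $W_c(\bbH_d)<+\infty$ in $d\geq 3$. For $d\geq 4$, I would reverse the previous comparison, this time on $\bbH_d$ itself: $W^+\equiv W$ versus $W^-=W$ only on an interior slab $\bbZ^{d-1}\times\llbracket 1,m\rrbracket$. Poudevigne gives $\bbE[e^{-M_\infty^{\bbH_d}}]\leq \bbE[e^{-\psi^{{\rm slab},W}(0)}]$, and since $W_c^{\rm slab}(\bbZ^d)<+\infty$ for $d\geq 4$ (as recalled in the introduction), choosing $W>W_c^{(m)}$ for some $m$ forces $M_\infty^{\bbH_d}>0$ a.s., whence $W_c(\bbH_d)\leq W_c^{\rm slab}(\bbZ^d)<+\infty$. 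In dimension $d=3$, however, slabs of $\bbZ^3$ are essentially two-dimensional and recurrent for every $W$, so this argument collapses; the bound $W_c(\bbH_3)<+\infty$ must come either from a direct adaptation of the $\bbH^{2|2}$-sigma-model delocalization of \cite{DS10,DSZ10} to $\bbH_3$ (which should only require $\bbZ^{d-1}$-stationarity and polynomial growth), or from a doubling/reflection argument transferring the transience of the $\bbZ^3$-VRJP to $\bbH_3$.
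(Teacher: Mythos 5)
Your overall architecture (zero-one law, Poudevigne monotonicity, then pin down $W_c$ at the extremes) is the same as the paper's. The zero-one law argument (via $H_\beta\psi=0$ forcing connectedness of $\{\psi>0\}$, then ergodicity under $\bbZ^{d-1}$-shifts) is exactly the argument of \cite[Proposition~3]{SZ19} that the paper invokes, so this part is fine. The monotonicity step with $f(x)=e^{-x}$ is also correct, and matches the paper.

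Where you diverge is in how you pin down $W_c$. For $d=1,2$ and for $W_c(\bbH_d)>0$ in $d\geq 3$, the paper cites Poudevigne's ``recurrent base graph $\Rightarrow$ recurrent VRJP'' theorem and the small-$W$ recurrence criterion from \cite{ACK14,ST15} directly, whereas you go through a convex-order coupling with $\bbZ^d$ (zeroing out the conductances incident to $\bbZ^d\setminus\bbH_d$). This is a legitimate alternative: together with Rapenne's result that $M_\infty>0$ a.s.\ implies uniform integrability, convex domination does transfer the strong-disorder conclusion from $\bbZ^d$ to $\bbH_d$, giving $W_c(\bbH_d)\geq W_c(\bbZ^d)$. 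The trade-off is that you now need the $\bbZ^d$ facts from the literature rather than the $\bbH_d$-specific ones; both routes are roughly equally self-contained.

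The genuine gap is the upper bound $W_c(\bbH_3)<+\infty$. For $d\geq 4$ your slab comparison works (though it still ultimately imports delocalization on $\bbZ^{d-1}$ from \cite{DS10,DSZ10}, since $W_c^{(m)}\leq W_c(\bbZ^{d-1})$ and the finiteness of $W_c(\bbZ^{d-1})$ is exactly the DSZ result). But for $d=3$ you correctly observe the slab route collapses, and then you do not give a proof: you only list two candidate strategies. The first --- adapting the $\bbH^{2|2}$/Ward-identity delocalization of \cite{DS10,DSZ10} to the half-space --- is precisely what the paper does for all $d\geq3$, so you would need to actually carry it out (the paper asserts this adaptation is routine, and moreover that it yields boundedness of $M_n$ in every $L^p$ for $W$ large, which is stronger). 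The second candidate, a reflection or doubling argument transferring transience from $\bbZ^3$ to $\bbH_3$, is not a known technique here: the environment $\beta$ has no reflection symmetry across $\{x_3=0\}$, and any such argument relating $W_c(\bbZ^3)$ and $W_c(\bbH_3)$ in either direction would essentially settle Conjecture~\ref{conj:space} for $d=3$, which the paper explicitly leaves open. So as written, the $d=3$ part of the lemma is not proved in your proposal.

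A small remark on the paper itself: its proof text says ``$W_c=0$'' for $d=1,2$, which is clearly a typo for $W_c=+\infty$ (consistent with the lemma statement and the earlier discussion); your reasoning reaches the correct conclusion.
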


\begin{proof}
The proof of the $0$--$1$ law of \cite[Proposition~3]{SZ19} extends readily to the case of $\bbH_d$ since the graph $\bbH_d$ is invariant by all horizontal translations (\textit{i.e.}\ translation by a vector of $\bbZ^{d-1}\times\{0\}$). 
The monotonicity property of Theorem~\ref{thm_monotonicity} easily shows the existence of some critical value 
\[ W_c = \inf\{W,\; M_{\infty} >0 \text{ a.s.}\} \in [0,+\infty] \] 
such that $M_{\infty}=0$ if $W<W_c$ and $M_{\infty} >0$ if $W>W_c$.

For $d=1,2$, since the random walk on $\bbH_d$ is recurrent we can apply \cite[Theorem~4]{Poudevigne24}, which implies that $W_c=0$ since the recurrence of the VRJP is equivalent to having $M_\infty=0$. 
For $d\ge 3$, we know that $W_c >0$ since the VRJP on a graph with bounded degree is recurrent for $W$ small enough, see~\cite{ACK14,ST15}. 
Besides, the proof of delocalization in \cite{DSZ10} extends easily to the case of the half-space and it implies that the VRJP is transient for $W$ large enough; in fact, it shows that for any $p>1$, $M_n$ is bounded in $L^p$ for $W$ large enough. 
\end{proof}

\begin{remark}
  \label{rem_Lp}
  Let us stress that in dimension $d \geq 3$, there is also some critical point $W_c^{(p)}$ for the $L^p$ convergence. 
  Indeed, the monotonicity property of Theorem~\ref{thm_monotonicity} easily shows that $W\mapsto \bbE[ (M_n)^p]$ is non-increasing, so for $p>1$ we can define
  \[
  W_c^{(p)} = \inf\Big\{W,\; \sup_{n\geq 1} \bbE[(M_n)^p] <+\infty\Big\}  \,.
  \] 
  We then have that $M_{\infty}\in L^p$ for $W>W_c^{(p)}$ and $M_{\infty}\notin L^p$ for $W<W_c^{(p)}$. 
  We obviously have $W_c^{(p)} \geq W_c >0$, and the fact that $W_c^{(p)}<+\infty$ in dimension $d\geq 3$ is a consequence of the proof of delocalization in \cite{DSZ10}.
\end{remark}

\subsubsection*{Restriction and conditioning properties}

It will be useful in the remainder of this paper to extend Lemma~\ref{lem:conditionalbeta} to infinite subsets such as $U=\bbS_n$. In particular, we want to compute the law of $\beta_{\bbH_d\setminus \bbS_n}$ conditioned on $\cF_n$. 
In the following, we denote $\hat G^{(n)}$ the Green function restricted to the slab $\bbS_n$, that is 
\begin{equation}\label{G_paths_slab}
\hat G^{(n)}(x,y)=\lim_{m\to \infty} \left( (H_\beta)_{\bbB_{n,m}\times \bbB_{n,m}}\right)^{-1}(x,y)= \sum_{ \sigma \colon x \xrightarrow{\bbS_n} y} W^{\vert \sigma\vert} \prod_{i=1}^{|\sigma|-1} \beta_{\sigma_i}^{-1}.
\end{equation}
For $x,y\in \partial^+_n$, we denote by $\tilde x,\tilde y$ the neighbors of $x,y$ which lay in $\bbS_n$. For $x,y\in \bbH_d\setminus \bbS_n$, we set
\begin{equation}\label{Wcheck}
\check W_{x,y}=
\begin{cases}
W \indic_{x\sim y} \,, &\hbox{ if either $x$ or $y$ are not in $\partial^+_n$}
\\
W\indic_{x\sim y} + W^2 \hat G^{(n)}(\tilde x, \tilde y) \,, &\hbox{ if both $x$ and $y$ are in $\partial^+_n$}
\end{cases}
\end{equation}
\begin{lemma}\label{condionning_Hd}
The conductances $\check W$ satisfy the condition of Section~\ref{sec_beta_inifinite}, i.e.\ for all $x\in \bbH_d\setminus \bbS_n$ we have $\sum_{y} \check W_{x,y}<\infty$. Additionally, conditionally on $\cF_n$, $\beta_{\bbH_d\setminus \bbS_n}$ has the law $\nu^{\check W}_{\bbH_d\setminus \bbS_n}$.
\end{lemma}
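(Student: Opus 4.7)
The plan is to use the characterisation of $\nu^{W,\eta}_V$ on an infinite set through its restrictions to finite subsets (Definition~\ref{beta_infinite}), and to derive the conditional law from the finite-volume formula of Lemma~\ref{lem:conditionalbeta}(ii) after exhausting the slab $\bbS_n$ from inside by finite sets $U'\uparrow\bbS_n$.

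First I would deal with integrability. For $x\in\bbH_d\setminus\bbS_n$ with $x\notin\partial_n^+$ one has $\sum_{y}\check W_{x,y}=\sum_{y\sim x}W<\infty$, since $\bbH_d$ has bounded degree. For $x\in\partial_n^+$, the nontrivial quantity to estimate is $W^2\sum_{y\in\partial_n^+}\hat G^{(n)}(\tilde x,\tilde y)$. The one-step decomposition of the last edge in a polymer path (equivalently, the identity $\psi_n(x)=(\hat G^{(n)}\hat\eta)(x)$ from Section~\ref{infinite-graph}) gives $W\sum_{y\in\partial_n^+}\hat G^{(n)}(\tilde x,\tilde y)=\Psi(\tilde x)$, where $\Psi(\tilde x)$ denotes the polymer partition function on $\bbS_n$ starting from $\tilde x$; by the same argument as Lemma~\ref{lem_Mmn} (the proof only uses horizontal translation invariance of~$\nu_{\bbH_d}^W$ and the monotonicity of Theorem~\ref{thm_monotonicity}, not the specific starting point~$0$), $\Psi(\tilde x)<\infty$ a.s., and the integrability follows.

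For the conditional law, Definition~\ref{beta_infinite} reduces the proof to showing that, for every finite $U\subset \bbH_d\setminus\bbS_n$, the conditional law of $\beta_U$ given $\cF_n$ is $\nu_U^{\check W|_{U\times U},\hat\eta_U}$ with boundary term $\hat\eta_U(x)=\sum_{y\in(\bbH_d\setminus\bbS_n)\setminus U}\check W_{x,y}$. Fix such a $U$ and an increasing exhaustion $U'\uparrow \bbS_n$ by finite subsets. Applying Lemma~\ref{lem:conditionalbeta}(i) to the finite set $V=U\cup U'$, the law of $\beta_{V}$ is $\nu_V^{W,\hat\eta^V}$ with $\hat\eta^V(v)=W\,|\{z\sim v,\, z\notin V\}|$; then Lemma~\ref{lem:conditionalbeta}(ii) applied with the partition $U'\sqcup U$ gives that $\beta_U$ conditional on $\beta_{U'}$ has law $\nu_U^{\check W^{U'},\check\eta^{U'}}$. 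Since $U\subset\bbH_d\setminus\bbS_n$, the only neighbours of $U$ inside $\bbS_n$ lie in $\bbZ^{d-1}\times\{n-1\}$ and correspond to $x\in\partial_n^+$; consequently
\begin{equation*}
\check W^{U'}_{x,y}=W\indic_{x\sim y}+W^2\,\big((H_\beta)_{U',U'}\big)^{-1}(\tilde x,\tilde y)\,\indic_{x,y\in\partial_n^+}\,,
\end{equation*}
and by Theorem~\ref{Th_martingale} the right-hand side converges a.s.\ to $\check W_{x,y}$ as $U'\uparrow\bbS_n$.

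The main obstacle is tracking the limit of the boundary term $\check\eta^{U'}$. For $x\in U\cap\partial_n^+$ a direct expansion yields
\begin{equation*}
\check\eta^{U'}(x)=\hat\eta^V(x)+W\!\!\sum_{\tilde y\in U'\cap(\bbZ^{d-1}\times\{n-1\})}\!\!\big((H_\beta)_{U',U'}\big)^{-1}(\tilde x,\tilde y)\,\hat\eta^V(\tilde y)\,,
\end{equation*}
because $\hat\eta^V$ vanishes on the interior of $\bbS_n$ once $U'$ is large, and at $\tilde y\in\bbZ^{d-1}\times\{n-1\}$ one has $\hat\eta^V(\tilde y)=W\indic_{\tilde y+e_d\notin U}$. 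Letting $U'\uparrow\bbS_n$, the sum converges to $W^2\sum_{y\in\partial_n^+\setminus U}\hat G^{(n)}(\tilde x,\tilde y)$, while $\hat\eta^V(x)\to W\,|\{y\sim x,\,y\in(\bbH_d\setminus\bbS_n)\setminus U\}|$; combining the two and using the definition~\eqref{Wcheck}, the limit is exactly $\hat\eta_U(x)=\sum_{y\in(\bbH_d\setminus\bbS_n)\setminus U}\check W_{x,y}$ (the case $x\notin\partial_n^+$ is analogous and easier). Continuity of the Laplace transform~\eqref{eq:laplace-nubetaweta} in its parameters then gives $\nu_U^{\check W^{U'},\check\eta^{U'}}\to\nu_U^{\check W|_{U\times U},\hat\eta_U}$ weakly, while Lévy's upward theorem applied to $\sigma(\beta_{U'})\uparrow\cF_n$ gives $\bbE[f(\beta_U)\mid\beta_{U'}]\to\bbE[f(\beta_U)\mid\cF_n]$ a.s.\ for bounded continuous $f$. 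Identifying the two limits concludes the proof.
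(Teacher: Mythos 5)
Your overall strategy matches the paper's: exhaust from inside by finite sets, apply Lemma~\ref{lem:conditionalbeta}, pass to the limit, and conclude via Definition~\ref{beta_infinite}. The integrability argument via the partition-function identity is also fine. However, there is a genuine gap in the treatment of the boundary term $\check\eta^{U'}$.

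You write that, in the expansion of $\check\eta^{U'}(x)$, the sum over $U'$ may be restricted to $\tilde y\in U'\cap(\bbZ^{d-1}\times\{n-1\})$ ``because $\hat\eta^V$ vanishes on the interior of $\bbS_n$ once $U'$ is large.'' This is false for any finite $U'$: $\bbS_n$ is infinite, so no matter how large $U'$ is, the vertices of $U'$ adjacent to $\bbS_n\setminus U'$ (the ``side boundary'' of $U'$) carry a nonzero boundary term $\hat\eta^V$. The correct expansion of $\check\eta^{U'}(\cdot)$ therefore contains an extra piece
\[
W\sum_{\tilde y\in U'\setminus(\bbZ^{d-1}\times\{n-1\})}\big((H_\beta)_{U',U'}\big)^{-1}(\tilde x,\tilde y)\,\hat\eta^V(\tilde y)
\]
supported on the side boundary of $U'$, and the whole point of the lemma is to show that this contribution vanishes as $U'\uparrow\bbS_n$. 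This is exactly the nontrivial step that the paper isolates: it invokes the argument of Appendix~\ref{appendix_A}, which proves a genuinely probabilistic statement equivalent to the fact that the VRJP on $\bbH_d$ cannot stay forever in the slab $\bbS_n$ (Lemma~\ref{M_n_VRJP} and the estimate~\eqref{lower_prob}). Without such an argument, the limit $\check\eta^{U'}\to\hat\eta_U$ is unjustified, and the proof does not go through. So your proposal follows the same route as the paper up to this point, but misses the one input that makes the lemma more than formal bookkeeping.
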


\begin{proof}
Let $U$ be a finite subset of $\bbH_d\setminus \bbS_n$ and let $U_{n,m}=U\cup \bbB_{n,m}$. 
By Definition~\ref{beta_infinite}, the law of~$\beta_{U_{n,m}}$ is given by $\nu_{U_{n,m}}^{W_{U_{n,m},U_{n,m}}, \hat \eta^{(n,m)}}$ with $\hat \eta^{(n,m)}_x =\sum_{y\in \bbH_d\setminus U_{n,m}} W_{x,y}$ for $x\in U_{n,m}$.
Then, by Lemma~\ref{lem:conditionalbeta}, the law of $\beta_U$ conditioned on $\bbB_{n,m}$ is given by $\nu_{U}^{\check W^{(n,m)}, \check \eta^{(n,m)}}$ with
$$
\check W^{(n,m)}= W_{U,\bbB_{n,m}} \hat G^{(n,m)} W_{\bbB_{n,m},U} \,, \quad \hbox{ and } \quad  \check \eta^{(n,m)}= \hat \eta^{(n,m)}_{U} + W_{U,\bbB_{n,m}} \hat G^{(n,m)}  \hat \eta^{(n,m)}_{\bbB_{n,m}} \,.
$$
where we have denoted 
$$
\hat G^{(n,m)}= \left( (H_{\beta})_{\bbB_{n,m}, \bbB_{n,m}}\right)^{-1} \,,
$$
which verifies $\lim_{m\to \infty}  \hat G^{(n,m)}= \hat G^{(n)}$.
We also clearly have $\lim_{m\to \infty} \check W^{(n,m)}= \check W_{U,U}$ with the definition~\eqref{Wcheck}. 
Additionally, for the same reason as in the proof of Appendix~\ref{appendix_A}, we have for all $x\in U$ 
$$
\lim_{m\to \infty} \check \eta^{(n,m)}_x=\sum_{y\in \bbH_d\setminus (\bbS_n\cup U)} \check W_{x,y} ,
$$
since in $\hat G^{(n,m)}  \hat \eta^{(n,m)}_{\bbB_{n,m}}$ the contribution coming from the side boundary of $\bbB_{n,m}$ goes to $0$ when $m$ tends to $+\infty$.
By Definition~\ref{beta_infinite}, this implies that the law of $\beta_{\bbH_d\setminus \bbS_n}$ conditioned on $\cF_n$ is given by~$\nu^{\check W}$. 
\end{proof}

\begin{remark}\label{rem:condionning_Hd}
In the remainder of this paper, it will sometimes be useful to augment the set $\bbS_n$ with finitely many points from $\partial^+_n$ (see the definitions of $\Lambda_n$, $\cG_n$ in~\eqref{def:Rn} below). Lemma~\ref{condionning_Hd} and its proof can be adapted to that setting straightforwardly, but we do not write the details for the sake of conciseness.
\end{remark}

\section{Proof of Theorem~\ref{th:moment}: main steps}
\label{sec_moments1}

We now adapt the strategy of \cite{J22cmp} to prove Theorem~\ref{th:moment}. 
First, we show a product-type structure for the martingale $(M_n)_{n\geq 1}$,  based on a renewal formula: this is where the half-space $\bbH_d$ is really used, and enables us to use the critical \cite[Theorem~2.1]{J22cmp}, see Section~\ref{sec_renewal}.
(Note that in the directed polymer setting, this product structure is immediate due to the directedness and i.i.d.\ form of the environment.)
This enables us to show as a first step that, in the weak disorder phase, not only is the martingale uniformly integrable but $\sup_{n\geq 0} M_n$ is in fact integrable, see Section~\ref{sec_dominated}.
Then, we bound $\bbE[(M_n)^p]$ by using the overshoot of the martingale when it exceeds a threshold~$t$, see Section~\ref{sec_Lpcontrol}.
Finally, the last step consists in controlling this overshoot, see Section~\ref{sec_overshoot}: we treat the problem in a general fashion at first, postponing technical, model-dependent estimates to Section~\ref{sec_claims} --- this is the most technical part of the paper because of the intricate definition and restriction/conditioning properties of the $\beta$ potential (at the same time, we rely strongly on these specific properties).

\subsection{Renewal formulae on \texorpdfstring{$\bbH_d$}{H\_d}}\label{sec_renewal}
 
To adapt the strategy of \cite{J22cmp} we want to write a renewal type equation for $M_{k+\ell}$ for $k,\ell$ two positive integers, and decompose polymers of width $k+\ell$ into polymers from~$0$ to $x\in \partial^+_k$ and polymers from $x$ to $\partial^+_{k+\ell}$. 
However, our polymers are non directed and a polymer from $x$ to $\partial^+_{k+\ell}$ may make some excursion in $\bbS_k$. Besides, the potential $\beta_{\bbS_k}$ and $\beta_{\bbS_{\ell}\setminus \bbS_k}$ are not independent. 
However, thanks to the specific form of the potential, these two difficulties combine well and we can write a type of renewal equation at the price of a modification of the conductances on the slab from level $k$ to level $k+\ell$. 

We denote by $\bbS_{k,k+\ell}$ the slab between level $k$ et $k+\ell-1$,
$$
\bbS_{k,k+\ell}=\{x\in \bbH_d, \; k\le x_d\le k+\ell-1\}.
$$
Recall the definition of $\check W$ in \eqref{Wcheck}. For $x\in \partial_k^+$, we denote by $\check M_\ell (x)$ the partition function of polymers in the slab $\bbS_{k,k+\ell}$ and with conductances $\check W$,
\begin{equation}\label{eq:def:checkM:1}
\check M_\ell (x)= \sum_{ \sigma \colon x\xrightarrow{\bbS_{k,k+\ell}} \partial_{k+\ell}^+}  \prod_{i=1}^{|\sigma|-1} \check W_{\sigma_i, \sigma_{i+1}} \beta_{\sigma_i}^{-1} \,,
\end{equation}
where the sum runs on finite paths for the graph associated with conductances $\check W$ (which is the lattice graph with extra edges between any two points of $\partial^+_k$).
By construction of $\check W$, we see that we also have
\begin{equation}\label{eq:def:checkM:2}
\check M_\ell (x)= \sum_{ \sigma \colon x \xrightarrow{\bbS_{k+\ell}} \partial_{k+\ell}^+}   W^{\vert\sigma\vert}  \prod_{i=1}^{|\sigma|-1} \beta_{\sigma_i}^{-1} \,,
\end{equation}
where the paths are allowed to venture inside~$\bbS_k$.
For $z\in \partial^+_k$, we denote by $\alpha_z$ the probability that a polymer in $\bbS_k$ started from $0$ reaches $\partial_k^+$ at the site~$z$, that is
$$
\alpha_z  \coloneqq \frac{1}{M_k} \sum_{ \sigma \colon 0\xrightarrow{\bbS_{k}} z}   W^{\vert\sigma\vert}  \prod_{i=1}^{|\sigma|-1} \beta_{\sigma_i}^{-1} \,.
$$
With these definitions, we then have our renewal product-type formula:
\begin{equation}
\label{eq:decompoMn}
M_{k+\ell} =  M_k \sum_{z \in \partial^+_k} \alpha_z \check{M}_{\ell}(z) \,.
\end{equation}
(The analogous decomposition is immediate in the context of directed polymers, see e.g.\  \cite[\S2.1.1, Eq.~(2.3)]{Com17}.)

Recall that $\cF_k=\sigma\{\beta_x, \; x\in \bbS_k\}$.
Then, by Lemma~\ref{condionning_Hd}, conditionally on $\cF_k$, the law of $(\beta_x)_{x\in \bbH_d\setminus \bbS_k}$ is given by $\nu^{\check W}$. Hence, conditionally on $\cF_k$, $\check{M}_{\ell}(z)$ is indeed the polymer partition function in $\bbS_{k,k+\ell}$, starting from $z$, with the weights $\check W$. Since $\check W_{x,y}\ge W\indic_{x\sim y}$, the monotonicity result Theorem~\ref{thm_monotonicity} gives the following crucial inequality: 
for any non-negative convex function $f$, we have
\begin{equation}\label{convex_Sk}
  \bbE\left( f\big( \check M_\ell(z) \big) \; \big| \; \cF_k\right) \leq \bbE(f(M_\ell))  \, .
\end{equation}

%
%
%
%
%
%
%

\subsection{Step 1. Integrability of \texorpdfstring{$\sup_{n\geq 0} M_n$}{sup M\_n} in the weak disorder phase}
\label{sec_dominated}

We start by proving the following result, which is the analogue of \cite[Th.~1.1-(i)]{J22cmp} in the context of the VRJP.

\begin{proposition}
\label{prop:domination}
Assume that $\lim_{n\to\infty} M_n = M_{\infty} >0$, which holds in particular if $W>W_c$.
Then we have that
\[
\bbE\Big[ \sup_{n\geq 0} M_n \Big] <+\infty \,.
\]
\end{proposition}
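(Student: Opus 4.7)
\emph{Strategy.} The plan is to show that the martingale $(M_n)$ cannot overshoot its limit by much. The key estimate to establish is that there exists $c>0$ such that, for every $k\geq 0$,
\begin{equation}\label{eq:keyest}
\bbP\bigl(M_{\infty}\geq (c/2)\, M_{k}\,\big|\,\cF_{k}\bigr) \;\geq\; \tfrac{1}{2} \quad \text{a.s.}
\end{equation}
Granted~\eqref{eq:keyest}, the conclusion follows by applying it at the first-passage time $\tau_{t}:=\inf\{n\geq 0\colon M_{n}\geq t\}$. Partitioning on the $\cF_{k}$-measurable events $\{\tau_{t}=k\}$ yields $\bbP\bigl(M_{\infty}\geq (c/2)\,M_{\tau_{t}},\, \tau_{t}<\infty\bigr) \geq \tfrac{1}{2}\,\bbP(\tau_{t}<\infty)$, and since $M_{\tau_{t}}\geq t$ on $\{\tau_{t}<\infty\}$, this gives $\bbP(\sup_{n}M_{n}\geq t) = \bbP(\tau_{t}<\infty) \leq 2\,\bbP(M_{\infty}\geq ct/2)$. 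Integrating over $t\geq 0$ and using that $M_{\infty}\in L^{1}$ (from the uniform integrability of $(M_{n})_{n\geq 0}$, see~\cite{Rapenne}) gives $\bbE[\sup_{n}M_{n}]\leq 1+(4/c)\,\bbE[M_{\infty}]<\infty$.

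\emph{Proof of~\eqref{eq:keyest}.} The renewal decomposition~\eqref{eq:decompoMn} expresses $M_{k+\ell}/M_{k}$ as a convex combination (with $\cF_{k}$-measurable weights $(\alpha_z)$, which sum to $1$) of the variables $\check{M}_{\ell}(z)$, $z\in\partial^{+}_{k}$. Combining Jensen's inequality with the convex monotonicity~\eqref{convex_Sk} yields, for every non-negative convex $f$:
\begin{equation}\label{eq:fconvex}
\bbE\Bigl[f\bigl(M_{k+\ell}/M_{k}\bigr)\,\Big|\,\cF_{k}\Bigr] \;\leq\; \sum_{z}\alpha_{z}\,\bbE\bigl[f(\check{M}_{\ell}(z))\,\big|\,\cF_{k}\bigr] \;\leq\; \bbE[f(M_{\ell})].
\end{equation}
Set $Z_{k}:=M_{\infty}/M_{k}$, which is well-defined since $M_{k}>0$ a.s.\ and $M_{\infty}<\infty$ a.s. Letting $\ell\to\infty$ in~\eqref{eq:fconvex} for any bounded convex $f$, using conditional dominated convergence on the left (since $M_{k+\ell}/M_{k}\to Z_{k}$ a.s.) and bounded convergence on the right (since $M_{\ell}\to M_{\infty}$ a.s.), I obtain $\bbE[f(Z_{k})\,|\,\cF_{k}] \leq \bbE[f(M_{\infty})]$. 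I then apply this with $f(x)=(c-x)_{+}$. Since $M_{\infty}>0$ a.s.\ in weak disorder, $\bbE[(c-M_{\infty})_{+}]\leq c\,\bbP(M_{\infty}\leq c)\to 0$ as $c\to 0^{+}$, so one can choose $c>0$ small enough that $\bbE[(c-M_{\infty})_{+}]<c/4$. Markov's inequality then gives $\bbP(Z_{k}<c/2\,|\,\cF_{k}) \leq (2/c)\,\bbE[(c-Z_{k})_{+}\,|\,\cF_{k}] < 1/2$, which is exactly~\eqref{eq:keyest}.

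\emph{Main obstacle.} The crux is the passage to the limit $\ell\to\infty$ in~\eqref{eq:fconvex}, which forces the use of a bounded convex $f$ -- hence the choice $f(x)=(c-x)_{+}$, which is precisely why the argument requires $M_{\infty}>0$ a.s.\ (i.e.\ weak disorder) through $\bbP(M_\infty\leq c)\to 0$. Compared to the directed polymer setting of~\cite{J22cmp}, the novelty is the use of the convex monotonicity~\eqref{convex_Sk} of Poudevigne, which compensates for the lack of independence between $\beta_{\bbS_{k}}$ and $\beta_{\bbH_{d}\setminus\bbS_{k}}$; in the directed setting, this role is played by the i.i.d.\ structure of the environment. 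The strong Markov step at $\tau_{t}$ is routine since $\{\tau_t=k\}\in\cF_k$.
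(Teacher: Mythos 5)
Your proof is correct, and for the model-specific part it is the same as the paper's: you derive, from the renewal decomposition~\eqref{eq:decompoMn} together with Poudevigne's convex monotonicity~\eqref{convex_Sk} and Jensen's inequality, exactly the domination inequality $\bbE[f(M_{k+\ell}/M_k)\mid\cF_k]\le \bbE[f(M_\ell)]$ for all non-negative convex $f$ (your~\eqref{eq:fconvex} is the paper's~\eqref{eq:condJunk}). Where you differ is in the concluding step: the paper then invokes Junk's abstract Theorem~2.1-(i) from~\cite{J22cmp} as a black box, whereas you re-derive its conclusion directly, by passing to the limit $\ell\to\infty$ in~\eqref{eq:fconvex} with the bounded convex test function $f(x)=(c-x)_+$ (valid precisely because bounded convex functions on $\bbR_+$ are non-increasing, so dominated/bounded convergence applies on both sides), choosing $c$ small so that $\bbE[(c-M_\infty)_+]<c/4$ (here the weak-disorder hypothesis $M_\infty>0$ a.s.\ is used), applying conditional Markov to obtain $\bbP(M_\infty\ge (c/2)M_k\mid\cF_k)\ge 1/2$ uniformly in $k$, and then decomposing on the first-passage time $\tau_t$ to get $\bbP(\sup_n M_n\ge t)\le 2\bbP(M_\infty\ge ct/2)$, which integrates to $\bbE[\sup_n M_n]\le 1+(4/c)\bbE[M_\infty]<\infty$. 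The net effect is a self-contained proof that does not rely on the external citation; this buys transparency (one sees exactly where weak disorder enters) at the modest cost of reproducing an argument that is, in substance, the one behind Junk's lemma. The small details check out: $M_k>0$ a.s.\ makes $Z_k=M_\infty/M_k$ well-defined, the events $\{\tau_t=k\}$ are $\cF_k$-measurable so the conditional estimate can be applied there, and $\bbE[M_\infty]\le 1$ by Fatou (or $=1$ by the uniform integrability of~\cite{Rapenne}).
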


This shows that for any $W>W_c$, the martingale $(M_n)_{n\geq 0}$ is not only uniformly integrable (as proven in~\cite[Theorem~1]{Rapenne}), but it is dominated by an integrable random variable.

\begin{proof}
The proof mostly relies on the inequality from \cite[Theorem~2.1-(i)]{J22cmp}, together with our renewal decomposition formula~\eqref{eq:decompoMn} and convex-monotonicity~\eqref{convex_Sk}.
Let us reproduce here Theorem~2.1-(i) of~\cite{J22cmp} in our context, for convenience. 
If $(M_n)_{n\geq 0}$ is a non-negative $(\cF_n)$-martingale such that, for any convex function $f:\bbR_+\to \bbR$ we have a.s.
\begin{equation}
\label{eq:condJunk}
\bbE\bigg[ f\Big( \frac{M_{k+\ell}}{M_k} \Big) \;\Big|\; \cF_k\bigg] \leq \bbE\big[ f(M_{\ell}) \big] \qquad \text{ for all } k,\ell\geq 0 \,,
\end{equation}
then, if $M_{\infty}=\lim_{n\to\infty} M_n$ a.s., having $\bbP(M_{\infty}>0)>0$ implies that $\bbE[\sup_n M_n] <+\infty$.

We therefore only have to prove that the condition~\eqref{eq:condJunk} is satisfied in our case. 
We use the decomposition~\eqref{eq:decompoMn}: noticing that $\sum_{z \in \partial^+_k} \alpha_z = 1$ and applying Jensen's inequality, we get that for a non-negative convex function~$f$
\[
f\Big( \frac{M_{k+\ell}}{M_k} \Big) = f\Big(\sum_{z \in \partial^+_k} \alpha_z \check{M}_{\ell}(z)\Big) \leq \sum_{z \in \partial^+_k} \alpha_z f\big(\check{M}_{\ell}(z)\big) \,.
\]
Now, since the $\alpha_z$ are $\cF_k$-measurable, we obtain that 
\begin{equation}
  \label{convconv}
  \bbE\bigg[ f\Big( \frac{M_{k+\ell}}{M_k} \Big) \;\Big|\; \cF_k\bigg]  \leq \sum_{z \in \partial^+_k} \alpha_z \bbE\big[ f\big(\check{M}_{\ell}(z)\big) \;\big|\; \cF_k \big] \leq \sum_{z \in \partial^+_k} \alpha_z \bbE[f(M_{\ell})] \,,
\end{equation}
where we have used~\eqref{convex_Sk} for the last inequality.
Since $\sum_{z \in \partial^+_k} \alpha_z = 1$, this concludes the proof of~\eqref{eq:condJunk} and thus of Proposition~\ref{prop:domination}.
\end{proof}

\subsection{Step 2. Control of \texorpdfstring{$\bbE[(W_n)^p]$}{E[W\_n\^{}p)]} via its overshoot}
\label{sec_Lpcontrol}
We now show Theorem~\ref{th:moment} using the strategy of~\cite{FJ23,J22cmp}.
For $t>1$, let us define the stopping time
\[
\tau= \tau_t = \inf\{n\geq 1, M_n >t\} \,.
\]

\begin{remark}
\label{rem:tailtau}
We stress right away that, since $\{\sup_{n\geq 0} M_n > t\} = \{\tau_t<\infty\}$, Proposition~\ref{prop:domination} gives that, for any $W>0$ such that $\lim_{n\to\infty} M_n = M_{\infty} >0$ a.s.\
\[
\bbE\Big[ \sup_{n\geq 0} M_n \Big] = \int_0^{\infty} \bbP( \tau_t <+\infty ) \dd t <+\infty \,. 
\]
As a consequence, using that $t\mapsto \bbP( \tau_t <+\infty)$ is non-increasing, we have $\lim_{t\to\infty}t\bbP( \tau_t <+\infty) =0$.
In particular, for any $\gep>0$ there is some $t_{\gep} >1$ such that $\bbP( \tau_t <+\infty) \leq  \gep t^{-1}$ for all $t\geq t_{\gep}$.
\end{remark}

Let us fix $p \in (1,2]$ and $t>1$ (specified later in the proof) and write
\[
\bbE\big[ (M_n)^p \big] \leq t^p + \bbE\big[ (M_n)^p \indic_{\{\tau_t \leq n\}}\big] = t^p + \sum_{k=1}^n  \bbE\big[ (M_n)^p \indic_{\{\tau_t = k\}}\big] \,.
\]
Now, using again the decomposition~\eqref{eq:decompoMn}, and since the function $x\mapsto x^p$ is convex, we get, exactly as in~\eqref{convconv},
\[
\bbE\bigg[ \Big(\frac{M_n}{M_k}\Big)^p \;\Big|\; \cF_k \bigg] \leq \sum_{z\in \partial^+_{k}} \alpha_z \bbE\big[ \check{M}_{n-k}(z)^p \;\big|\; \cF_k\big] \leq \sum_{z\in \partial^+_{k}} \alpha_z \bbE\big[ (M_{n-k})^p \big]  = \bbE\big[ (M_{n-k})^p \big]\,.
\]
All together, conditioning first by $\cF_k$, we get
\[
 \bbE\big[ (M_n)^p \indic_{\{\tau_t = k\}}\big] = \bbE\Big[ (M_k)^p \indic_{\{\tau_t = k\}} \bbE\big[ (M_n/M_k)^p \mid \cF_k \big] \Big]
 \leq  \bbE\big[ (M_{n-k})^p \big] \bbE\big[ (M_k)^p \indic_{\{\tau_t = k\}} \big] \,.
\]
Since $\bbE[ (M_{n-k})^p] \leq \bbE[(M_n)^p]$ because $(M_n^p)_{n\geq 0}$ is a submartingale, we end up with the following inequality:
\begin{equation}
\label{ineq:Lp}
\bbE\big[ (M_n)^p \big] \leq t^p + \bbE\big[ (M_n)^p \big] \sum_{k=1}^{n} \bbE\big[ (M_k)^p \indic_{\{\tau_t = k\}} \big]  \leq t^p + \bbE\big[ (M_n)^p \big] \bbE\big[ (M_{\tau})^p \indic_{\{\tau_t <+\infty\}} \big]   \,.
\end{equation}

We then rely on a technical lemma, which controls the second moment of the so-called overshoot of $(M_n)_{n\geq 0}$ at the instant $\tau_t$.
This lemma is the core of the method; we postpone its proof to the next subsection.
\begin{lemma}
\label{lem:oveshoot}
There exists a constant $C=C_W>1$ such that, for any $t>1$,
\[
\bbE\big[ (M_{\tau})^2 \indic_{\{\tau_t <+\infty\}} \big] \leq C t^2 \, \bbP(\tau_t <+\infty) \,.
\]
\end{lemma}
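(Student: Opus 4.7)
The plan, adapting the approach of~\cite{J22cmp}, combines three ingredients: the renewal decomposition \eqref{eq:decompoMn}, the convex monotonicity of Theorem~\ref{thm_monotonicity} made accessible via the conditioning description of Lemma~\ref{condionning_Hd}, and a conditional overshoot estimate which is the main technical step and which I would defer to Section~\ref{sec_claims}. The starting point is to partition the expectation according to the value of $\tau$: writing $A_{k-1} := \{\sup_{j < k} M_j \leq t\} \in \cF_{k-1}$, so that $\{\tau_t = k\} = A_{k-1} \cap \{M_k > t\}$, one has
\[
\bbE\bigl[(M_\tau)^2 \indic_{\tau < \infty}\bigr] = \sum_{k \geq 1} \bbE\bigl[(M_k)^2 \indic_{A_{k-1}} \indic_{M_k > t}\bigr].
\]
Setting $R_{k-1} := M_k/M_{k-1}$, the renewal formula \eqref{eq:decompoMn} presents $R_{k-1}$ as a convex combination $R_{k-1} = \sum_z \alpha_z \check{M}_1(z)$, with $\cF_{k-1}$-measurable non-negative weights $\alpha_z$ summing to~$1$. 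Combining Jensen's inequality on this convex combination with the convex monotonicity of Theorem~\ref{thm_monotonicity} (applied as in~\eqref{convex_Sk} via Lemma~\ref{condionning_Hd}), one obtains the fundamental comparison
\[
\bbE\bigl[f(R_{k-1}) \bigm| \cF_{k-1}\bigr] \leq \bbE[f(M_1)] \qquad \text{for every convex } f \colon \bbR_+ \to \bbR,
\]
which plays the role of the i.i.d.\ / directed structure used in~\cite{J22cmp}.

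The core of the proof is the following conditional overshoot estimate, whose proof is the place where I expect to spend most of the work and would defer to Section~\ref{sec_claims}: there exists a constant $C = C_W$ such that, $\bbP$-almost surely, for all $u \geq 1$,
\begin{equation}
\label{eq:plan:overshoot}
\bbE\bigl[R_{k-1}^2 \indic_{R_{k-1} > u} \bigm| \cF_{k-1}\bigr] \leq C u^2 \, \bbP\bigl(R_{k-1} > u \bigm| \cF_{k-1}\bigr).
\end{equation}
This is the main obstacle: the mere finiteness of $\bbE[R_{k-1}^2 \mid \cF_{k-1}] \leq \bbE[M_1^2]$ is not enough, since the conditional expectation $\bbE[R_{k-1}^2 \mid R_{k-1} > u, \cF_{k-1}]$ could a priori blow up faster than~$u^2$. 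The regularity of the tail of $R_{k-1}$ must be extracted from the explicit structure of the $\beta$-potential: the plan is to use the Laplace-transform formula~\eqref{eq:laplace-nubetaweta}, the inverse-Gaussian marginal law of $1/\beta_x$, and the $1$-dependence, to first establish a tail bound of the form $\bbE[M_1^2 \indic_{M_1 > u}] \leq C u^2 \bbP(M_1 > u)$, and then to transfer it to $R_{k-1}$ via the convex comparison above together with Lemma~\ref{condionning_Hd} applied to the single slab $\bbS_{k-1, k}$.

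Once \eqref{eq:plan:overshoot} is available, the conclusion is immediate. On $A_{k-1}$ one has $M_{k-1} \leq t$, hence $u := t/M_{k-1} \geq 1$ is an admissible choice in~\eqref{eq:plan:overshoot}; multiplying by $M_{k-1}^2$, which is $\cF_{k-1}$-measurable, and noting that $\{M_{k-1} R_{k-1} > t\} = \{M_k > t\}$, we get on $A_{k-1}$
\[
\bbE\bigl[(M_k)^2 \indic_{M_k > t} \bigm| \cF_{k-1}\bigr] \leq C t^2 \, \bbP(M_k > t \mid \cF_{k-1}).
\]
Taking expectations, the right-hand side becomes $C t^2 \bbP(\tau_t = k)$, and summing over $k \geq 1$ yields $\bbE[(M_\tau)^2 \indic_{\tau < \infty}] \leq C t^2 \bbP(\tau < \infty)$, as claimed.
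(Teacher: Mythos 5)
Your reduction of the lemma to the conditional overshoot estimate \eqref{eq:plan:overshoot} is sound and is essentially the route the paper takes: decompose over $\{\tau_t=k\}$, write $M_k=M_{k-1}R_{k-1}$ with $R_{k-1}=\sum_z\alpha_z\check M_1(z)$, and apply a conditional bound on $\bbE[R_{k-1}^2\indic_{\{R_{k-1}>u\}}\mid\cF_{k-1}]$ with $u=t/M_{k-1}\ge1$ on the event $\{\tau_t>k-1\}$. One cosmetic difference: the paper only proves the conditional estimate for $u\ge B_0$ with $B_0$ large (Lemma~\ref{lem:ratio}), and therefore inserts an intermediate splitting $\{M_k<At\}$ vs.\ $\{M_k\ge At\}$ with $A=B_0$; your version for all $u\ge1$ is equivalent up to enlarging the constant (for $1\le u<B_0$ one bounds $\bbE[R^2\indic_{\{R>u\}}]\le B_0^2\,\bbP(R>u)+C_0B_0^2\,\bbP(R\ge B_0)\le(1+C_0)B_0^2u^2\,\bbP(R>u)$), so your bookkeeping is fine.

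The genuine gap is in your plan for proving \eqref{eq:plan:overshoot} itself. You propose to establish a tail bound $\bbE[M_1^2\indic_{\{M_1>u\}}]\le Cu^2\,\bbP(M_1>u)$ for the unconditioned $M_1$ and then ``transfer it to $R_{k-1}$ via the convex comparison.'' This cannot work: the convex monotonicity \eqref{convex_Sk} only gives \emph{upper} bounds $\bbE[f(R_{k-1})\mid\cF_{k-1}]\le\bbE[f(M_1)]$, whereas \eqref{eq:plan:overshoot} requires the conditional tail probability $\bbP(R_{k-1}>u\mid\cF_{k-1})$ of $R_{k-1}$ itself as a \emph{lower} bound on the right-hand side; monotonicity gives you $\bbP(R_{k-1}>u\mid\cF_{k-1})\le\bbP(M_1>u)$, which goes the wrong way, and the conditional law of $R_{k-1}$ genuinely depends on $\cF_{k-1}$ (through the weights $\alpha_z$ and the modified conductances $\check W$), so a fixed-law tail estimate cannot be transported. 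This two-sided control is where the real work lies: the paper's Section~\ref{sec_claims} builds an auxiliary martingale $\cR_n=\bbE[\cR\mid\cG_n]$ by enumerating the boundary sites $\partial_k^+$, proves a reverse second-moment bound $\bbE[\cR^2\mid\cG_n]\le c_1(1+\cR_n^2)$ (Claim~\ref{claim:1}) so that Paley--Zygmund yields a lower bound on the conditional tail, and controls the one-step jumps of $\cR_n$ via explicit inverse-Gaussian computations (Claim~\ref{claim:2} and Lemma~\ref{lem:claim2proof:IGlemma}). Without an argument of this type, the key step of your proof is missing, not merely deferred.
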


Let us now conclude the proof of Theorem~\ref{th:moment} thanks to this lemma. First, notice that Lemma~\ref{lem:oveshoot} and the conditional H\"older inequality $\bbE[ (M_{\tau})^p \mid \tau_t <+\infty ] \leq \bbE[ (M_{\tau})^2 \mid \tau_t <+\infty ]^{p/2}$ imply that
\[
\bbE\big[ (M_{\tau})^p \indic_{\{\tau_t <+\infty\}} \big] \leq C t^p \, \bbP(\tau_t <+\infty) \,,
\]
where $C$ is the constant in Lemma~\ref{lem:oveshoot} (note that $C^{p/2}\leq C$).

Let us now fix $\gep:=\frac{1}{4C}$ and also $t=t_{\gep}$ from Remark~\ref{rem:tailtau} such that $C t \bbP(\tau_t<\infty) \leq \frac{1}{4}$. Then, for such fixed $t$, we have $\bbE[ (M_{\tau})^p \indic_{\{\tau_t <+\infty\}}] \leq  \frac14 t^{p-1}$.
Hence, since $t=t_{\gep}$ is fixed, one can choose $p = p_{\gep}$ close enough to $1$ so that $\bbE\big[ (M_{\tau})^p \indic_{\{\tau_t <+\infty\}} \big] \leq \frac12$.
Going back to~\eqref{ineq:Lp}, this gives that
\[
\bbE\big[ (M_n)^p \big] \leq t^p + \frac12  \bbE\big[ (M_n)^p \big] \,,
\]
so that $\bbE\big[ (M_n)^p \big] \leq 2 t^p$, uniformly in $n$.
This concludes the proof of Theorem~\ref{th:moment} since $t$ is a fixed quantity.
\qed

\begin{proof}[Proof of Proposition~\ref{prop_SD}]
Recalling that $\tau_t\coloneqq \inf\{n\ge1, M_n>t\}$ and that we assume that $\lim_{n\to\infty} M_{n} =0$ a.s., one deduces from the optional stopping theorem and Fatou's lemma that
\[
t\,\bbP(\tau_t <+\infty) \leq \bbE\left[M_{\tau_t}\indic_{\{\tau_t<+\infty\}}\right] = \bbE\Big[\lim_{n\to+\infty} M_{\tau_t\wedge n}\Big] \leq 1\,.
\]
This yields the upper bound.

Regarding the lower bound, it follows from a direct adaptation of the arguments that yield Proposition~\ref{prop:domination}. Let us assume that the lower bound in Proposition~\ref{prop_SD} does not hold: then for any $\gep>0$, there is some $t=t_{\gep}>1$ such that $\bbP(\sup_{n} M_n \geq t)  = \bbP(\tau_t <+\infty)\leq \gep t^{-1}$, similarly as in Remark~\ref{rem:tailtau}. But then one can repeat the arguments above to show that, for some $p$ sufficiently close to $1$, we have $\bbE[(M_n)^p] \leq 2 t^{p}$ for all $n\geq 1$, for some well-chosen (but fixed) $t>0$. Thus, the martingale $(M_n)_{n\geq 0}$ converges in $L^p$, contradicting the assumption that $M_{\infty}=0$ a.s.
\end{proof}

\subsection{Step 3. control of the overshoot: proof of Lemma~\ref{lem:oveshoot}}
\label{sec_overshoot}

We now prove Lemma~\ref{lem:oveshoot}.
We only have to prove that for any $k\geq 1$,
\[
  \bbE\big[  (M_k)^2  \indic_{\{\tau_t=k\}} \big] \leq C t^2\, \bbP(\tau_t=k) \,.
\]
For this, we decompose the expectation according to whether $M_k$ is large or not. 
Let $A>1$ be a fixed (large) constant, and write
\[
 \bbE[ (M_k)^2  \indic_{\{\tau_t=k\}}]  =  \bbE[ (M_k)^2  \indic_{\{\tau_t=k,M_{k}< At\}}] + \bbE[ (M_k)^2  \indic_{\{\tau_t=k,M_{k}\geq At\}}]  \,.
\]
The first term is obviously bounded by $(A t)^2\, \bbP(\tau_t=k)$, while for the second term, introducing the ratio $R_k:= M_k/M_{k-1}$ and conditioning by $\cF_{k-1}$, we have
\begin{equation}
\label{ineq:L2tau=k}
\bbE\big[ (M_k)^2  \indic_{\{\tau_t=k,M_{k}\geq At\}} \big]
\leq \bbE\Big[ (M_{k-1})^2 \indic_{\{\tau_t>k-1\}} \bbE\Big[ (R_k)^2 \indic_{\{R_k \geq \frac{At}{M_{k-1}}\}} \;\big|\; \cF_{k-1} \Big] \Big] \,.
\end{equation}
We are now reduced to the following lemma.
\begin{lemma}
\label{lem:ratio}
Let $\cR:= M_k/M_{k-1}$ and let us denote $\tilde\bbP:= \bbP( \;\cdot \mid \cF_{k-1})$.
There exists some $B_0$ and some constant $C_0$ such that, for any $B\geq B_0$ and any $k\geq 1$,
\[
\tilde \bbE\big[ \cR^2 \indic_{\{\cR \geq B\}}  \big] \leq C_0 B^2 \,\tilde\bbP( \cR \geq B)  \,.
\]
\end{lemma}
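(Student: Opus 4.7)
The plan is to reduce the overshoot bound on $\cR$ to an integrability estimate for the ``fresh'' one-layer polymer $M_1$, using the renewal decomposition of Section~\ref{sec_renewal} together with the convex monotonicity~\eqref{convex_Sk}. First, I would apply the renewal formula~\eqref{eq:decompoMn} with $\ell=1$ (and $k$ replaced by $k-1$), which decomposes the ratio as a convex combination
\[
\cR = \sum_{z\in \partial^+_{k-1}} \alpha_z\, \check M_1(z)\,,
\]
where the weights $\alpha_z\ge 0$ are $\cF_{k-1}$-measurable and sum to $1$ and, conditionally on $\cF_{k-1}$, each $\check M_1(z)$ is a polymer partition function on the single-layer slab $\bbS_{k-1,k}$ started at $z$, with modified conductances $\check W\ge W$. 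Jensen's inequality followed by the convex monotonicity~\eqref{convex_Sk} then yields, for any non-negative convex function $f$, the master inequality
\[
\tilde\bbE\big[f(\cR)\big] \leq \sum_z \alpha_z\, \tilde\bbE\big[f(\check M_1(z))\big] \leq \bbE\big[f(M_1)\big]\,,
\]
which transfers any integrability bound on the ``fresh'' one-layer partition function $M_1$ to the conditional law of $\cR$, almost-surely and uniformly in $k\ge 1$.

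The key technical input, which I would postpone to Section~\ref{sec_claims}, is that $M_1$ admits a finite exponential moment: for some $\theta_0=\theta_0(W,d)>0$, $\bbE[\exp(\theta_0 M_1)] =: K_0 < +\infty$. Granted this, plugging $f(x)=e^{\theta_0 x}$ into the master inequality yields $\tilde\bbE[e^{\theta_0 \cR}] \le K_0$, hence the uniform upper tail bound $\tilde\bbP(\cR\ge s) \le K_0\, e^{-\theta_0 s}$ for every $s\ge 0$. The layer-cake formula
\[
\tilde\bbE\big[\cR^2 \indic_{\cR\ge B}\big] = B^2\, \tilde\bbP(\cR\ge B) + 2\int_B^{+\infty} s\, \tilde\bbP(\cR\ge s)\, \dd s
\]
then bounds the integral term by $O(B\, e^{-\theta_0 B})$, and to absorb it into $C_0 B^2\, \tilde\bbP(\cR\ge B)$ I would combine the upper tail with a matching lower tail of the form $\tilde\bbP(\cR\ge B) \ge c\, e^{-\theta_0' B}/B^{\alpha}$. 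Such a lower bound is obtained from a one-site argument: forcing a single inverse-Gaussian weight $\beta_y^{-1}$ at some $y\in \partial^+_{k-1}$ to be atypically large (of order $B$) is enough to ensure $\cR\ge B$, and this event has probability of the required order by the explicit inverse Gaussian density of $\beta_y^{-1}$ recalled in Definition~\ref{beta}.

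The hardest part will be establishing the exponential moment for $M_1$. Since $M_1$ is essentially a Green-function-type quantity on the base layer $\bbZ^{d-1}$ weighted by the $\beta$-potential, and the underlying walk on $\bbZ^{d-1}=\bbZ^2$ is recurrent in the critical dimension $d=3$, one cannot rely on the walk alone to control paths: the argument must crucially exploit the random positive mass carried by $\beta$ (which makes $H_\beta$ positive-definite on the layer, with a random ``spectral gap'') and the explicit Laplace transform~\eqref{eq:laplace-nubetaweta} of the $\beta$-potential, presumably through an iterative or sub-additive construction based on the $1$-dependence of~$\beta$ recalled in Section~\ref{sec_preliminaries}.
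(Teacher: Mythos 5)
Your opening moves are the same as the paper's: the one-layer decomposition $\cR=\sum_{z}\alpha_z\check M_1(z)$ and the master inequality $\tilde\bbE[f(\cR)]\leq\bbE[f(M_1)]$ via Jensen and convex monotonicity. Your claimed exponential moment for $M_1$ is also correct, and in fact easier than you seem to expect: rather than the spectral-gap / $1$-dependence iteration you sketch, one can simply apply Theorem~\ref{thm_monotonicity} with $W^-$ supported on the single edge from $0$ to $\partial_1^+$, which gives $\bbE[f(M_1)]\leq\bbE[f(W/\beta_0^-)]$ for any convex $f$, and $W/\beta_0^-\sim\mathrm{IG}(1,W)$ has $\bbE[e^{\theta\cdot}]<\infty$ for $\theta<W/2$. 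So the part you flag as hardest is actually a one-liner; the real gap is elsewhere.

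The gap is in Step~6, where you want to absorb the integral term $\int_B^\infty s\,\tilde\bbP(\cR>s)\,\dd s$ into $B^2\tilde\bbP(\cR\geq B)$ using a lower tail $\tilde\bbP(\cR\geq B)\geq c\,e^{-\theta_0' B}/B^\alpha$ that matches the universal upper rate $\theta_0$. This uniform lower bound is false. The conditional law of $\cR$ given $\cF_{k-1}$ depends on the $\cF_{k-1}$-measurable weights $(\alpha_z)$, and when those weights are spread over $N$ sites, $\cR$ is an average of (roughly) $\mathrm{IG}$-type variables and its tail decays at a rate growing linearly in $N$: heuristically, $\tilde\bbP(\cR\geq B)\approx e^{-N\cdot c_W B}$. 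No almost-sure lower bound with a fixed rate $\theta_0'$ can hold. In that same regime your universal upper bound $\tilde\bbP(\cR>s)\leq K_0 e^{-\theta_0 s}$ becomes extremely loose, so the integral you control is exponentially larger than the true one, and the quotient $\tilde\bbE[\cR^2\indic_{\cR\geq B}]/\big(B^2\tilde\bbP(\cR\geq B)\big)$ is not bounded by your estimates. A toy counterexample: if $\cR$ takes the values $0$ and $M$ with $\tilde\bbP(\cR=M)=e^{-\theta_0 M}$, your tail upper bound holds, yet for $B\ll M$ one has $\tilde\bbE[\cR^2\mid\cR\geq B]=M^2\gg B^2$, so a universal upper bound alone can never yield the overshoot estimate. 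This is exactly why the paper does not try to match upper and lower tails of $\cR$. Instead it builds the Doob martingale $\cR_n=\tilde\bbE[\cR\mid\cG_n]$ by revealing the $\beta$-field one boundary site at a time, stops it at the first time $T$ it exceeds $2B$, and uses Paley--Zygmund (Claim~\ref{claim:1}) to convert the event $\{\cR_n\geq 2B\}$ into a bounded-below conditional probability $\tilde\bbP(\cR\geq B\mid\cG_n)\geq\tfrac{1}{8c_1}$, while Claim~\ref{claim:2} controls the overshoot of $\cR_n$ at time $T$ using the inverse-Gaussian structure of a single revealed coordinate. That construction adapts automatically to the configuration $(\alpha_z)$, which is precisely what a fixed exponential rate cannot do. To repair your argument you would need, at minimum, a lower tail whose rate degrades with the effective number of sites carrying $\alpha$-mass, matched to a correspondingly sharper upper tail; at that point you are essentially reinventing the martingale/Paley--Zygmund mechanism.
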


Using this lemma inside~\eqref{ineq:L2tau=k} with $A=B_0$, and noticing that $\frac{At}{M_{k-1}} \geq A\geq B_0$ on the event $\tau_t>k-1$, we obtain that
\[
\begin{split}
\bbE\big[ (M_k)^2 \indic_{\{\tau_t=k,M_{k}> At\}} \big]
& \leq C_0 \bbE\bigg[ (M_{k-1})^2 \indic_{\{\tau_t>k-1\}} \Big(\frac{At}{M_{k-1}}\Big)^2 \indic_{\{R_k \geq \frac{At}{M_{k-1}}\}} \bigg]\\
&= C_0 (A t)^2 \, \bbP\big(\tau_t>k-1 , M_k\geq At \big) \leq  C_0 (A t)^2 \bbP\big(\tau_t=k \big)  \,.
\end{split}
\]
All together, we obtain that for $A=B_0$, 
\[ 
\bbE\big[ (M_k)^2 \indic_{\{\tau_t=k\}} \big] \leq  (1+C_0) (A t)^2\, \bbP(\tau_t=k) \,,
\] 
which was our initial goal.
It therefore only remains to prove Lemma~\ref{lem:ratio}: we first give a general scheme of proof that relies on two claims; these claims are model-dependent and are proven afterwards.

\begin{proof}[Proof of Lemma~\ref{lem:ratio}]
First of all, and in order to lighten notation, we relabel the indices (replacing~$k$ by $k+1$), and we write
\begin{equation}\label{eq:def:cR:decomposition}
 \cR\coloneqq \frac{M_{k+1}}{M_k} = \sum_{z\in \partial^+_k} \alpha_z \check{M}_1(z)\,,
\end{equation}
where the second identity follows from the decomposition~\eqref{eq:decompoMn}; we also change the index in $\tilde \bbP(\cdot):= \bbP(\;\cdot\mid \cF_{k})$. 
Our goal is to show that, for all sufficiently large $B$, we have
\begin{equation}
\label{goal:ratio}
\tilde \bbE[\cR^2 \mid \cR\geq B] \leq C_0\, B^2 \,.
\end{equation}
The difficulty here is that, contrary to what happens in the directed polymer setting, the random variables $\check{M}_1(z)$ that appear in the above sum are \textit{not} independent conditionally on $\cF_k$.
Our strategy is to see $\cR$ as the limit of some martingale. To do that, we enumerate the elements of $\partial_k^+$, writing $\partial^+_k\eqqcolon\{z_n, n\geq1\}$, and for $n\geq0$ we let $\Lambda_n\coloneqq \bbS_k \cup \{z_1,\ldots,z_n\}$ and $\cG_n\coloneqq \sigma\{ \beta_{z},z\in \Lambda_n \}$ (in particular $\cG_0=\cF_k$). 
Finally, we define
\begin{equation}\label{def:Rn}
\cR_n \coloneqq \tilde \bbE[\cR\mid \cG_n] = \sum_{z\in \partial^+_k} \alpha_z \tilde \bbE[\check{M}_{1}(z)\mid\cG_n]\,,
\end{equation}
where the second equality uses the fact that the $\alpha_z$ are $\cF_k=\cG_0$-measurable.
(Note that no $\check M_1(z)$ is $\cG_n$-measurable, since $\check W_{x,y} >0$ for any $x,y \in \partial_k^+$, recall~\eqref{eq:def:checkM:1}.)

We now conclude the proof of the lemma subject to two estimates that are model-dependent.
Recall that $\tilde \bbP(\cdot) = \bbP(\;\cdot\mid \cF_{k})$ and $\cG_n$ have a dependence in $k$ that does not appear in the notation.

\begin{claim}
\label{claim:1}
There is a constant $c_1$ (uniform in $k$) such that, for any $n\geq 1$, 
\[
  \tilde \bbE[ \cR^2 \mid \cG_n] \leq c_1 (1+\cR_n^2)\,.
\]
\end{claim}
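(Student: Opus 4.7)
My plan is to reduce the claim to a bound on the conditional variance of $\cR$, and then to exploit the conditioning and convex-monotonicity properties of the $\beta$-potential. Using that $\cR_n=\tilde\bbE[\cR\mid\cG_n]$, one has the decomposition
\[
\tilde\bbE[\cR^2\mid\cG_n] = \cR_n^2 + \Var(\cR\mid\cG_n)\,,
\]
so the claim is equivalent to showing that $\Var(\cR\mid\cG_n)\leq c'(1+\cR_n^2)$ for some constant $c'$ uniform in $k$ and $n$, which yields the inequality with $c_1:=1+c'$.

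Next, I would exploit the extension of Lemma~\ref{lem:conditionalbeta}(ii) to the present infinite-volume setting (combining Lemma~\ref{condionning_Hd} with Remark~\ref{rem:condionning_Hd}): under $\tilde\bbP(\cdot\mid\cG_n)$, the restriction $\beta|_{\bbH_d\setminus\Lambda_n}$ has a law of the form $\nu^{\widetilde W,\widetilde\eta}$ for some $\cG_n$-measurable conductances $\widetilde W$ and boundary term $\widetilde\eta$, with $\widetilde W_{x,y}\geq W\indic_{x\sim y}$ pointwise. Splitting each path in the definition~\eqref{eq:def:checkM:1} of $\check M_1(z)$ according to its visits to the ``known'' sites $\{z_1,\ldots,z_n\}$ and summing out the (now $\cG_n$-measurable) weights $\beta_{z_i}^{-1}$ recasts $\cR$ as a polymer partition function in the environment $(\widetilde W,\widetilde\eta)$, whose $\cG_n$-conditional mean equals~$\cR_n$.

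With this representation in hand, I would then apply Poudevigne's convex monotonicity (Theorem~\ref{thm_monotonicity}) to compare this conditioned partition function with the one for the reference uniform conductances $W_{x,y}=W\indic_{x\sim y}$, using $f(x)=x^2$. The ``bulk'' contribution should then be controlled by the unconditional moment $\bbE[(M_1)^2]<+\infty$ (finite by the same convex-monotonicity argument that yielded~\eqref{convex_Sk}, applied to a simpler comparison environment), while the contribution of the boundary term $\widetilde\eta$ is expected to reproduce precisely the $\cR_n^2$ correction, since $\widetilde\eta$ encodes the $\cG_n$-information about the mean of $\cR$.

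The main obstacle will be to handle the boundary term $\widetilde\eta$ rigorously, since Theorem~\ref{thm_monotonicity} is only stated for $\eta=0$. Two possible routes are to extend Poudevigne's monotonicity to the full $\nu^{W,\eta}$ family (which should follow by the same strategy as in~\cite{Poudevigne24}), or to work directly from the explicit Laplace transform~\eqref{eq:laplace-nubetaweta}. A perhaps more direct route would be to establish a pointwise two-point bound
\[
\tilde\bbE\bigl[\check M_1(z)\check M_1(z')\mid\cG_n\bigr]\leq c\Bigl(1+\tilde\bbE[\check M_1(z)\mid\cG_n]\,\tilde\bbE[\check M_1(z')\mid\cG_n]\Bigr)
\]
uniformly in $z,z'\in\partial_k^+$, from which the claim would follow by summing against $\alpha_z\alpha_{z'}$ and using $\sum_z\alpha_z=1$; this would likely require a careful analysis of the $\check W$-Green function entering~\eqref{Wcheck}.
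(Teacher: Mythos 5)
Your first reduction is fine and matches the paper: $\tilde\bbE[\cR^2\mid\cG_n]=\cR_n^2+\Var(\cR\mid\cG_n)$, so it suffices to control the conditional variance. From there, however, your proposal diverges and does not close the gap. The route via "recast $\cR$ as a polymer in $(\widetilde W,\widetilde\eta)$ and apply convex monotonicity with $f(x)=x^2$" has two problems. First, Theorem~\ref{thm_monotonicity} is a comparison in the conductance matrix $W$, not in the boundary term $\eta$, and you acknowledge this yourself; but the more serious issue is that such a comparison naturally produces a bound of the form $\tilde\bbE[\cR^2\mid\cG_n]\leq\bbE[M_1^2]$ (a constant), whereas what you need is a bound that \emph{scales like $\cR_n^2$}. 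The assertion that "the boundary term $\widetilde\eta$ is expected to reproduce precisely the $\cR_n^2$ correction" is exactly where the argument would have to happen, and it is unsubstantiated — the $\cR_n^2$ factor in the paper does not come from tracking $\widetilde\eta$ in the law of $\beta$, but from a purely algebraic observation about the path decomposition.

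The missing ingredient is the linear structure provided by Proposition~\ref{prop:condGn}: for $z\in\partial_k^+\cap\Lambda_n$, the path expansion of the restricted Green's function gives
\[
\check M_1(z)=\sum_{w'\in\Lambda_n}G^{\Lambda_n}_\beta(z,w')\sum_{w\notin\Lambda_n}W_{w',w}\bigl(\indic_{\{w\in\partial_{k+1}^+\}}+\check M_1(w)\indic_{\{w\in\partial_k^+\}}\bigr)\,,
\]
which, combined with $\bbE[\check M_1(w)\mid\cG_n]=1$ for $w\notin\Lambda_n$, shows that
\[
\cR-\cR_n=\sum_{w\in\partial_k^+\setminus\Lambda_n}\alpha_w^{(n)}\bigl(\check M_1(w)-1\bigr)
\]
with explicit $\cG_n$-measurable non-negative weights $\alpha_w^{(n)}$ satisfying $\sum_w\alpha_w^{(n)}\leq\cR_n$. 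Jensen plus the second-moment bound $\bbE[\check M_1(w)^2\mid\cG_n]\leq 1+W^{-1}$ (itself a quick application of monotonicity, but to a one-edge comparison graph, not to the full conditioned $\cR$) then give $\Var(\cR\mid\cG_n)\leq(1+W^{-1})\cR_n^2$. Your proposed two-point covariance bound as an alternative would indeed suffice if proved, but it is stated without justification and would in any case require essentially the same structural input as Proposition~\ref{prop:condGn}; it is not a shortcut. As written, the proposal has a genuine gap.
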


\begin{claim}
\label{claim:2}
Let $T := \inf\{ n\geq 0,\, \cR_n\geq 2 B\}$. Then, there is some constant $c_2$ (uniform in $k$) such that, for $B$ sufficiently large and $n\geq 1$,
\[
\tilde \bbE[\cR^2 \mid T=n] \leq c_1\big(1+\tilde \bbE[\cR_n^2 \mid T=n]\big) \leq c_2 B^2\,,
\] 
where $c_1$ is the constant appearing in Claim~\ref{claim:1}.
\end{claim}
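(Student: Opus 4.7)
The first inequality follows at once from Claim~\ref{claim:1} and the tower property. Since $\cR_m$ is $\cG_m$-measurable for every $m\geq 0$, $T$ is a $(\cG_n)$-stopping time and thus $\{T=n\}\in \cG_n$; conditioning and applying Claim~\ref{claim:1} yields
\[
\tilde\bbE[\cR^2 \mid T=n] \;=\; \tilde\bbE\big[\tilde\bbE[\cR^2 \mid \cG_n] \,\big|\, T=n\big] \;\leq\; c_1\big(1 + \tilde\bbE[\cR_n^2 \mid T=n]\big).
\]

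For the second inequality, it suffices to show $\tilde\bbE[\cR_n^2 \mid T=n] = O(B^2)$, which is an overshoot estimate for the martingale $(\cR_n)$ at the crossing time $T$. By definition of $T$, on $\{T=n\}$ (for $n\geq 1$) one has $\cR_{n-1}<2B$ and $\cR_n\geq 2B$. Writing $\cR_n = \cR_{n-1}+\Delta_n$ with $\Delta_n := \cR_n-\cR_{n-1}$, the elementary bound $(a+b)^2\leq 2a^2+2b^2$ gives
\[
\tilde\bbE[\cR_n^2 \mid T=n] \;\leq\; 8B^2 + 2\,\tilde\bbE[\Delta_n^2 \mid T=n],
\]
so that the problem reduces to the jump estimate $\tilde\bbE[\Delta_n^2 \mid T=n]=O(B^2)$.

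This overshoot bound is the model-dependent part of the argument. The jump $\Delta_n = \tilde\bbE[\cR\mid \cG_n] - \tilde\bbE[\cR\mid \cG_{n-1}]$ encodes the effect of revealing $\beta_{z_n}$ on the conditional expectation of $\cR = \sum_z \alpha_z \check M_1(z)$. The plan is to use the restriction/conditioning property of the $\beta$-potential (Lemmas~\ref{lem:conditionalbeta} and~\ref{condionning_Hd}) to express $\check M_1(z)$ explicitly as a function of $\beta_{z_n}$, and to exploit the polynomial tail of $\beta_{z_n}^{-1}$, which has an inverse Gaussian law (see Definition~\ref{beta}), to establish a tail estimate of the form $\tilde\bbP(\Delta_n \geq u \mid \cG_{n-1}) \leq C_0/u^{\alpha}$ with some $\alpha>2$, valid for $u$ above a threshold. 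Since $\{T=n\}\subset\{T\geq n,\,\Delta_n \geq 2B-\cR_{n-1}\}$ and $\cR_{n-1}<2B$ on $\{T\geq n\}$, the standard overshoot identity for polynomial tails then gives $\tilde\bbE[\Delta_n^2 \mid T=n,\cG_{n-1}] \leq C(2B)^2$ for $B$ sufficiently large, and taking a further conditional expectation concludes. The main obstacle is the uniform-in-$k,n$ polynomial-tail bound for $\Delta_n$, which requires a delicate analysis of the dependence of $\check M_1(z)$ on $\beta_{z_n}$ through the modified conductances $\check W$ of~\eqref{Wcheck}, and is where the specific structure of the $\beta$-distribution must be used.
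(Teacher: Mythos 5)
Your first inequality and the reduction to an overshoot bound are fine and match the paper's starting point. The genuine gap is that the entire model-dependent part --- which is the substance of Claim~\ref{claim:2} --- is left as a plan, and the plan as stated would not go through. Two concrete problems. First, the conditional law of the relevant increment is \emph{not} polynomial-tailed: conditionally on $\cG_{n-1}$ one has $\cR_n = X_{n-1}Z_n + Y_{n-1}$ with $X_{n-1},Y_{n-1}$ being $\cG_{n-1}$-measurable and $Z_n = \bbE[\check M_1(z_n)\mid\cG_n] = \check\eta_{z_n}/\check\beta_{z_n} \sim \IG(1,\check\eta_{z_n})$ with $\check\eta_{z_n}\geq W$, whose tail decays exponentially. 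More importantly, even granting a tail bound, a one-sided estimate $\tilde\bbP(\Delta_n\geq u\mid\cG_{n-1})\leq C_0 u^{-\alpha}$ does \emph{not} imply $\tilde\bbE[\Delta_n^2\indic_{\{\Delta_n\geq A\}}\mid\cG_{n-1}]\leq CA^2\,\tilde\bbP(\Delta_n\geq A\mid\cG_{n-1})$: you need the \emph{ratio} form of the tail (a matching lower bound, i.e.\ Condition~1 of \cite{FJ23}), which is exactly what the paper extracts from the explicit inverse-Gaussian density in Lemma~\ref{lem:claim2proof:IGlemma} via $\bbP(Z\geq x+t)\leq e^{-t\lambda/4}\bbP(Z\geq x)$.

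Second, your additive decomposition creates a scaling problem that the paper's multiplicative one avoids. On $\{T=n\}$ the threshold $2B-\cR_{n-1}$ can be arbitrarily close to $0$ (since $\cR_{n-1}$ may be just below $2B$), so the event $\{\Delta_n\geq 2B-\cR_{n-1}\}$ need not be a tail event and ``for $u$ above a threshold / $B$ sufficiently large'' does not apply; the naive bound $\tilde\bbE[\Delta_n^2\indic_{\{T=n\}}]\leq \tilde\bbE[\Delta_n^2\indic_{\{T>n-1\}}]\lesssim B^2\,\tilde\bbP(T>n-1)$ produces the wrong probability on the right-hand side. The paper instead phrases the overshoot multiplicatively (Lemma~\ref{lem:claim2proof}): the event $\{\cR_n\geq B\cR_{n-1}\}$ becomes $\{Z_n\geq A\}$ with $A = B+(B-1)Y_{n-1}/X_{n-1}\geq B$, uniformly bounded below, so the IG lemma applies with a uniform constant and yields $\tilde\bbE[\cR_n^2\indic_{\{\cR_n\geq B\cR_{n-1}\}}\mid\cG_{n-1}]\leq c(B\cR_{n-1})^2\,\tilde\bbP(\cR_n\geq B\cR_{n-1}\mid\cG_{n-1})$, which is then combined with $\cR_{n-1}<2B$ on $\{T>n-1\}$. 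To complete your argument you would need to (i) derive the decomposition $\cR_n=X_{n-1}Z_n+Y_{n-1}$ from Propositions~\ref{prop:VRJP:2ndmom}--\ref{prop:condGn} and Lemma~\ref{condionning_Hd}, and (ii) prove the ratio-form second-moment tail bound for the inverse Gaussian; neither step is present or correctly anticipated in the proposal.
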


For the VRJP, these estimates turn out to be quite technical and rely on the specific form of the $\beta$-potential: their proofs are postponed to Section~\ref{sec_claims} below.
However, they are easily verified in the context of the directed polymer (under some mild conditions) since in that case the variables $(\check{M}_{1}(z))_{z\in \partial_k^+}$ turn out to be i.i.d., as outline in the following remark.

\begin{remark}
  \label{rem:claimsforpolymer}
Consider the case where $\cR= \sum_{i=1}^{\infty} \alpha_i \xi_i$ with $\alpha_i\geq 0$ verifying $\sum_{i=1}^{\infty} \alpha_i \leq 1$ and $(\xi_i)_{i\geq 0}$ i.i.d.\ non-negative random variables with $\bbE[\xi_i]=1$ and $\bbE[\xi_i^2] <+\infty$ (this is the setting appearing in directed polymers).
Then, with the filtration $\cG_n:= \sigma\{ \xi_i, i\leq n\}$, one can easily verify that Claim~\ref{claim:1} holds without further condition and that Claim~\ref{claim:2} holds provided that $\bbE[ \xi_i \mid \xi_i \geq A ]\leq c A$ for some constant $c>0$, which corresponds to Condition~1 in~\cite{FJ23}.
Note that~\cite[Proposition~2.3]{JL24b} uses a slightly different proof for Lemma~\ref{lem:oveshoot}, with only a finite moments assumption for $\xi_i$ --- however, their proof, and in particular \cite[Eq.~(3.17)]{JL24b}, does not seem to be adaptable to the VRJP setting.
\end{remark}

We can now conclude the proof of Lemma~\ref{lem:ratio}.
First of all, since $\cR_n = \tilde \bbE[ \cR \mid \cG_n]$, applying Paley--Zygmund inequality, we obtain
\begin{equation*}
\label{eq:claim1}
\tilde \bbP\Big( \cR \geq \demi \cR_n \;\Big|\; \cG_n \Big) \geq \frac{1}{4} \frac{\bbE[\cR\mid\cG_n]^2}{\bbE[\cR^2 \mid \cG_n]} \geq \frac{1}{4 c_1} \frac{\cR_n^2}{1+\cR_n^2} \,,
\end{equation*}
thanks to Claim~\ref{claim:1}.
Now, since $ \frac12 \cR_n \geq B$ on the event $T=n$, this readily gives that  
\[
\tilde \bbP(\cR\geq B \mid T=n \big) \geq \frac{1}{4 c_1} \tilde \bbE\Big[\frac{\cR_n^2}{1+\cR_n^2} \;\Big|\; T=n \Big] \geq \frac{1}{4 c_1} \frac{B^2}{1+B^2} \geq \frac{1}{8c_1} \,,
\]
the last inequality holding as soon as $B\geq 1$.
We then deduce that
\[
\tilde \bbE\big[ \cR^2 \mid T=n, \cR\geq B\big]  = \frac{\tilde \bbE\big[ \cR^2 \indic_{\{\cR\geq B\}} \mid T=n\big]}{\tilde \bbP(\cR\geq B \mid T=n \big)}\leq 8c_1 \tilde \bbE\big[ \cR^2 \mid T=n \big]  \leq 8 c_1 c_2 \, B^2\,,
\]
the last inequality following from Claim~\ref{claim:2}.
All together, we obtain
\begin{align*}
\tilde \bbE\big[\cR^2 \mid \cR\geq B\big] &\leq B^2 + \sum_{n=1}^{+\infty} \tilde \bbE\big[\cR^2 \indic_{\{T=n\}} \mid \cR\geq B\big]  \\
& \leq  B^2 +  \sum_{n=1}^{+\infty} \tilde \bbP\big(T=n \mid \cR \geq B\big) \tilde \bbE\big[ \cR^2 \mid T=n, \cR\geq B \big]  \\
& \leq  B^2 + 8 c_1 c_2 B^2 \sum_{n=1}^{+\infty} \tilde \bbP\big(T=n \mid \cR \geq B\big) + B^2 \leq (1+8c_1 c_2) \, B^2 \,.
\end{align*}
This proves~\eqref{goal:ratio} and thus concludes the proof of Lemma~\ref{lem:ratio} --- assuming Claims~\ref{claim:1}-\ref{claim:2}.
\end{proof}

\section{Proof of Theorem~\ref{th:moment}: technical steps,  Claims~\ref{claim:1}-\ref{claim:2}}
\label{sec_claims}

As mentioned in Remark~\ref{rem:claimsforpolymer}, the proofs of Claims~\ref{claim:1}-\ref{claim:2} are easy when $\cR$ is a sum of i.i.d. random variables (under Condition~1 in \cite{FJ23}). 
There is no such independence in the case of the VRJP, however it satisfies other model-specific properties (see Section~\ref{sec_preliminaries}), most notably the fact that the law of the field $\gb$ is stable by taking marginals and conditional distributions, as stated in Lemma~\ref{lem:conditionalbeta}.

\subsection{Some useful estimates}
Recall from~\eqref{def:Rn} that we defined $\partial^+_k\eqqcolon\{z_n, n\geq1\}$, and for $n\geq0$, $\Lambda_n\coloneqq \bbS_k \cup \{z_1,\ldots,z_n\}$, $\cG_n\coloneqq \sigma\{\beta_{z},z\in \Lambda_n \}$ and $\cR_n=\bbE[\cR\mid \cG_n]$. 
Recall also the decomposition~\eqref{eq:def:cR:decomposition}, where $(\alpha_z)_{z\in \partial^+_k}$ are $\cG_0$-measurable and almost surely a probability distribution on $\partial^+_k$; recall also that $\check M_1$ is defined in~\eqref{eq:def:checkM:1}-\eqref{eq:def:checkM:2}.
In preparation to the proofs of Claims~\ref{claim:1}-\ref{claim:2}, let us collect some other properties of the VRJP.

\begin{proposition}\label{prop:VRJP:2ndmom}
  For all $n,k\geq0$, one has
  \begin{equation}\label{eq:prop:VRJP:2ndmom}
  \forall\, w\in \partial^+_k\setminus \Lambda_n,\qquad 
  \bbE\left[\check{M}_1(w)\,\big|\, \cG_n\right]\,=\, 1 \quad \text{ and } \quad 
  \bbE\left[\check{M}_1(w)^2\,\big|\, \cG_n\right]\,\leq\, 1+W^{-1}\,. 
  \end{equation}
  \end{proposition}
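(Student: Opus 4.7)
The plan is to recast $\check M_1(w)$ under the conditional measure as a slab-$1$ polymer martingale on a \emph{reduced} graph, then use the general VRJP theory for~(1) and Poudevigne's convex monotonicity (Theorem~\ref{thm_monotonicity}) for~(2). First, combining Lemma~\ref{lem:conditionalbeta}(ii) with Lemma~\ref{condionning_Hd} and Remark~\ref{rem:condionning_Hd} shows that, conditionally on $\cG_n$, the field $(\beta_z)_{z\in \bbH_d\setminus \Lambda_n}$ has law $\nu^{\check{\check W}}_{\bbH_d\setminus \Lambda_n}$ with \emph{vanishing} boundary term, where $\check{\check W}$ are modified conductances obtained by integrating out $\Lambda_n$; the $\check{\check\eta}=0$ property is preserved because the initial measure on $\bbH_d$ has $\eta=0$, so the boundary formula in Lemma~\ref{lem:conditionalbeta}(ii) produces a vanishing term. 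Partitioning the paths in~\eqref{eq:def:checkM:2} according to their excursions inside $\Lambda_n$ and summing each excursion via the corresponding Green's function on $\Lambda_n$ (exactly as in the derivation of $\check W$ in~\eqref{Wcheck}), one can rewrite $\check M_1(w)$ as the slab-$1$ polymer on $(\bbH_d\setminus \Lambda_n, \check{\check W})$ from $w$ to $\partial^+_{k+1}$.

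With this reinterpretation, Part~(1) is immediate: $\check M_1(w)$ is the first step of a slab-martingale starting at $M_0=1$ on a graph whose $\beta$-potential has vanishing boundary term, so the argument of Lemma~\ref{lem_Mmn} applies verbatim to give $\bbE[\check M_1(w)\mid \cG_n]=1$.

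For Part~(2), the plan is to apply Theorem~\ref{thm_monotonicity} with $W^+ \coloneqq \check{\check W}$ and $W^-$ supported only on the single edge $\{w,w+e_d\}$ with weight $W$ (and $W^-=0$ elsewhere). Since for $x,y\in\bbH_d\setminus\Lambda_n$ with $x\sim y$ one has $\check W_{x,y}\geq W$ (direct inspection of~\eqref{Wcheck}) and $\check{\check W}\geq \check W$ by construction, the ordering $W^-\leq W^+$ holds; taking $f(x)=x^2$ gives
\[
\bbE[\check M_1(w)^2\mid \cG_n] \,\leq\, \bbE_{\nu^{W^-}}\!\left[M_1^-(w)^2\right].
\]
In the comparison graph only the single path $w\to w+e_d$ contributes, so $M_1^-(w)=W/\beta_w^-$ with $1/\beta_w^-\sim \IG(1/W,1)$ (since $\sum_y W^-_{w,y}=W$), and the inverse-Gaussian moment formula $\bbE[X^2]=\mu^2+\mu^3/\lambda$ for $X\sim \IG(\mu,\lambda)$ yields $\bbE[(W/\beta_w^-)^2]=W^2(W^{-2}+W^{-3})=1+W^{-1}$, as announced. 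Note that this bound is tight: equality already holds in the $d=1$, $k=0$ case.

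The main technical obstacle is the partial degeneracy of $\nu^{W^-}$: at vertices $z\neq w,w+e_d$ one has $\sum_y W^-_{z,y}=0$, so $\nu^{W^-}$ is singular there. This does not affect $M_1^-(w)$ itself (which depends only on $\beta_w^-$ and the single edge), but applying Theorem~\ref{thm_monotonicity} cleanly requires a finite-volume approximation, e.g., working on increasing finite boxes with non-degenerate boundary terms $\eta$ (which can be chosen to vanish in the limit, as in the proof of Lemma~\ref{condionning_Hd}) and passing to the limit using the $L^p$-convergence guaranteed by convex monotonicity.
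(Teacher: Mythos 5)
Your proof is correct and follows essentially the same route as the paper: the first identity via the conditional law $\nu^{\check{\check W}}$ (with vanishing boundary term) and the normalization/martingale property of the polymer partition function, and the second via Theorem~\ref{thm_monotonicity} applied to the single edge $\{w,w+e_d\}$ with weight $W$, followed by the inverse-Gaussian second-moment computation giving $1+W^{-1}$. Your extra caution about the degeneracy of $W^-$ is not needed, since Theorem~\ref{thm_monotonicity} as stated only requires the starting vertex to be connected to $V_n^c$ in the sparser graph $\ggg_-$, which holds here.
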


\begin{proof}
The first identity on the conditional expectation of $\check{M}_1(w)$ immediately follows from the definitions of Section~\ref{infinite-graph} (see~\eqref{eq:def:psin} in particular).

The second identity is a direct consequence of Theorem~\ref{thm_monotonicity}. 
Indeed, there exists a unique $w'\in \partial^+_{k+1}$ such that $w'\sim w$, so letting $\tilde W_{x,y}\coloneqq W\indic_{\{x,y\}=\{w,w'\}}\leq \check{W}_{x,y}$ for $x,y\in\bbS_{k+1}$, we obtain from Theorem~\ref{thm_monotonicity} that
\[
\bbE\left(\check M_1(w)^2 \;\big|\;\cG_n\right)\le \bbE\left(M^{\tilde W}_1(w)^2 \right) \,,
\]
where $M^{\tilde W}_1$ is the partition function of the polymer measure associated with the weights $\tilde W$, that is $M^{\tilde W}_1(w) = W \tilde\beta_{w}^{-1}\sim\IG(1,W)$ (recall Definition~\ref{beta}). 
Using standard calculations for the inverse Gaussian law, we get \(\bbE[M^{\tilde W}_1(w)^2] = 1+W^{-1}\), which concludes the proof.
\end{proof}

Let us also give a useful result when $z\in \partial_k^+ \cap \Lambda_n$.

\begin{proposition}
\label{prop:condGn}
For any $z\in \partial_k^+\cap \Lambda_n$, we have
\begin{equation}\label{eq:Mcheck1}
  \check{M}_1(z)\,=\, \sum_{w'\in \Lambda_n} G^{\Lambda_n}_\gb(z,w') \sum_{w\notin \Lambda_n} W_{w',w}   \left( \indic_{\{w \in \partial_{k+1}^+\}} + \check{M}_1(w) \indic_{\{w \in \partial_{k}^+\}}\right)\,,
\end{equation}
where the Green's functions $G^{\Lambda_n}_\gb$ is taken over paths that remain in $\Lambda_n$ and $W_{w',w} =W \indic_{w'\sim w}$.
As a consequence,
\begin{equation}\label{eq:condGn}
  \bbE\left[\check{M}_1(z) \; \big| \; \cG_n \right]\,=\,   \sum_{w'\in \Lambda_n} G^{\Lambda_n}_\gb(z,w') \sum_{w\notin \Lambda_n} W_{w',w}\,.
\end{equation}
\end{proposition}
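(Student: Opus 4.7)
The plan is to establish the deterministic identity \eqref{eq:Mcheck1} first, by a path decomposition, and then deduce \eqref{eq:condGn} by conditioning on $\cG_n$ and invoking Proposition~\ref{prop:VRJP:2ndmom}.

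For \eqref{eq:Mcheck1}, I would start from the polymer-path representation \eqref{eq:def:checkM:2} of $\check M_1(z)$ as a sum over paths $\sigma\colon z \xrightarrow{\bbS_{k+1}} \partial^+_{k+1}$. Each such path can be uniquely decomposed at its first exit time from $\Lambda_n$: letting $\tau \ge 1$ be the smallest index with $\sigma_\tau \notin \Lambda_n$ and $w := \sigma_\tau$, the vertex $w$ necessarily lies in the outer boundary $\partial^+_{\Lambda_n} = \{z_j : j > n\} \cup \{z_j' : j \le n\}$, where $z_j'$ denotes the unique neighbor of $z_j$ in $\partial^+_{k+1}$. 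Two cases occur: if $w \in \partial^+_{k+1}$, then $\tau = |\sigma|$ and the path terminates, contributing the indicator $\indic_{\{w\in\partial^+_{k+1}\}}$; if instead $w \in \partial_k^+ \setminus \Lambda_n$, the continuation $(\sigma_\tau, \ldots, \sigma_{|\sigma|})$ is itself a polymer path from $w$ to $\partial^+_{k+1}$ in $\bbS_{k+1}$, contributing the factor $\check M_1(w)$. The pre-exit piece $(\sigma_0, \ldots, \sigma_{\tau-1}, w)$ is then summed over the last $\Lambda_n$-vertex $w' := \sigma_{\tau-1}$ via the Green's function expansion \eqref{G_paths}, producing $\sum_{w' \in \Lambda_n} G^{\Lambda_n}_\beta(z, w') W_{w', w}$. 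Assembling these two pieces across all $w'$ and $w$ yields \eqref{eq:Mcheck1}.

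To derive \eqref{eq:condGn}, I would take conditional expectations of both sides of \eqref{eq:Mcheck1} given $\cG_n$. The Green's function $G^{\Lambda_n}_\beta(z, w')$ depends only on $(\beta_x)_{x \in \Lambda_n}$, hence is $\cG_n$-measurable and factors out; the weights $W_{w', w}$ and indicators $\indic_{\{w \in \partial^+_{k+1}\}}$ are deterministic. For each $w \in \partial_k^+ \setminus \Lambda_n$, Proposition~\ref{prop:VRJP:2ndmom} gives $\bbE[\check M_1(w) \mid \cG_n] = 1$. Substituting produces
\[
\bbE[\check M_1(z) \mid \cG_n] \,=\, \sum_{w' \in \Lambda_n} G^{\Lambda_n}_\beta(z, w') \sum_{w \notin \Lambda_n} W_{w', w}\bigl(\indic_{\{w \in \partial^+_{k+1}\}} + \indic_{\{w \in \partial_k^+\}}\bigr),
\]
and since any $w \notin \Lambda_n$ with $W_{w', w} \neq 0$ must lie in $\partial^+_{\Lambda_n}\subset \partial^+_{k+1} \cup \partial_k^+$, the bracketed sum equals $1$ everywhere it contributes, yielding \eqref{eq:condGn}.

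The main obstacle is the careful bookkeeping of the $\beta^{-1}$ weights at the junction vertex $w$ in the path decomposition, since the normalization of $\check M_1$ (which omits $\beta^{-1}$ at the starting and terminating vertices) must be consistently reconciled with the Green's function convention of \eqref{G_paths} and with the modified conductances $\check W$ arising from Lemma~\ref{condionning_Hd} (extended to $\Lambda_n$ via Remark~\ref{rem:condionning_Hd}). These verifications are technical but routine and do not affect the overall structure of the argument.
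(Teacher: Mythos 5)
Your proposal is correct and follows essentially the same approach as the paper: you decompose the polymer path from $z$ at its first exit time from $\Lambda_n$, handle the two cases (exit to $\partial_{k+1}^+$ versus $\partial_k^+\setminus\Lambda_n$), and then condition on $\cG_n$ invoking Proposition~\ref{prop:VRJP:2ndmom}, which is precisely the decomposition in Eq.~\eqref{eq:proof:claim1:1} of the paper. Your final caveat about reconciling $\beta^{-1}$ conventions at the junction vertex is well-placed (the paper's own conventions in \eqref{G_paths} versus \eqref{G_paths_slab} and \eqref{polymer_intro} versus \eqref{def:Mn} are not entirely uniform), but as you note this is routine bookkeeping and does not change the structure of the argument.
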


\begin{proof}
Recall the formula~\eqref{eq:def:checkM:2} for $\check M_{1}(z)$. For $z\in \partial_k^+\cap\Lambda_n$, the sum over paths~$\gs$ can be decomposed on whether the path enters $\partial^+_k\setminus\Lambda_n$ or not, see Figure~\ref{fig:decoupe} for an illustration.
This yields
\begin{equation}\label{eq:proof:claim1:1}
  \check{M}_1(z)\,=\, \sum_{w'\in \Lambda_n} G^{\Lambda_n}_\gb(z,w') \sum_{w\in \partial_{k+1}^+} W_{w',w}  + \sum_{w'\in \Lambda_n, w\in \partial^+_k\setminus \Lambda_n} G^{\Lambda_n}_\gb(z,w') W_{w',w} \check{M}_1(w)\,,
\end{equation}
where the Green's functions $G^{\Lambda_n}_\gb$ is taken over paths that remain in $\Lambda_n$ and $W_{w',w} = W \indic_{w'\sim w}$.
Reorganizing the sums, we get~\eqref{eq:Mcheck1}.

\begin{figure}[ht]
  \begin{center}
    \includegraphics[width=.8\linewidth]{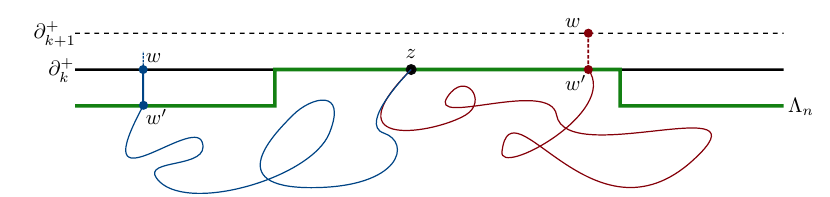}
    \caption{\footnotesize Illustration of the decomposition~\eqref{eq:proof:claim1:1}: we decompose the sum over paths in~\eqref{eq:def:checkM:2} according to whether the path stays inside $\Lambda_n$ before reaching $\partial_{k+1}^+$ (red/right path in the figure, first contribution in~\eqref{eq:proof:claim1:1}) or whether it first reaches $\partial^+_k\setminus \Lambda_n$ (blue/left path in the figure, second contribution in~\eqref{eq:proof:claim1:1}).}
      \label{fig:decoupe}
  \end{center}
\end{figure}

For~\eqref{eq:condGn}, we simply use~\eqref{eq:Mcheck1}, together with the fact that $\bbE[\check{M}_1(w) \mid \cG_n]=1$ for $w\in \partial_k^+ \setminus \Lambda_n$, thanks to Proposition~\ref{prop:VRJP:2ndmom}.
\end{proof}

\subsection{Proof of Claim~\ref{claim:1}}
Using Proposition~\ref{prop:VRJP:2ndmom} and Proposition~\ref{prop:condGn}, we get that
\[
  \check{M}_1(z)- \bbE\left[\check{M}_1(z)\,\big|\, \cG_n\right]
= \begin{cases}
  \  \check{M}_1(z) -1 &  \text{ if } z\in \partial_k^+\setminus \Lambda_n \,,\\
\displaystyle\sum_{w'\in \Lambda_n} G^{\Lambda_n}_\gb(z,w')\sum_{w\in \partial^+_k\setminus \Lambda_n} W_{w',w} (\check{M}_1(w)-1) & \text{ if } z \in \partial_k^+\cap \Lambda_n \,.
\end{cases}
\]
Therefore, splitting the sum $\cR-\cR_n = \sum_{z \in \partial_k^+} \alpha_z ( \check{M}_1(z)- \bbE[\check{M}_1(z)\mid \cG_n])$ according to whether $z\notin \Lambda_n$ or $z\in\Lambda_n$, we obtain that
\[
  \cR-\cR_n  =  \sum_{z\in \partial^+_k\setminus \Lambda_n} \ga_z\left(\check{M}_1(z)-1\right)  + \sum_{z\in \partial^+_k\cap\Lambda_n} \ga_z  \sum_{w'\in \Lambda_n, w\in \partial^+_k\setminus \Lambda_n} G^{\Lambda_n}_\gb(z,w') W_{w',w} \left(\check{M}_1(w)-1\right) \,.
\]
Rearranging the second sum, we can rewrite this as 
\[
  \cR-\cR_n  = \sum_{w \in \partial^+_k\setminus \Lambda_n}  \alpha_w^{(n)} \left(\check{M}_1(w)-1\right) 
  \  \text{ with } \ \alpha_w^{(n)} = \alpha_w + \sum_{z\in \partial^+_k\cap\Lambda_n , w'\in \Lambda_n} \ga_z  G^{\Lambda_n}_\gb(z,w') W_{w',w} \,.
\]
Then, applying Jensen's inequality (the $\alpha_w^{(n)}$ are non-negative), this yields
\[
  (\cR-\cR_n)^2 \,\leq\,  \bigg( \sum_{w\in \partial^+_k\setminus \Lambda_n} \ga_w^{(n)} \bigg) \bigg(\sum_{w\in \partial^+_k\setminus \Lambda_n} \ga_w^{(n)} \left(\check{M}_1(w)-1\right)^2 \bigg)\,,
\]
so that by Proposition~\ref{prop:VRJP:2ndmom} we get that 
\[
  \bbE\left[(\cR-\cR_n)^2\,\big|\, \cG_n\right] \leq (1+W^{-1}) \; \bigg( \sum_{w\in \partial^+_k\setminus \Lambda_n} \ga_w^{(n)} \bigg)^2
\]
recalling also that the $\alpha_w^{(n)}$ are $\cG_n$-measurable.
It remains to see that $\sum_{w\in \partial^+_k\setminus \Lambda_n} \ga_w^{(n)} \leq \cR_n$. 
But this comes from the same computation as above: 
indeed, using Propositions~\ref{prop:VRJP:2ndmom}-\ref{prop:condGn}, and decomposing the sum $\cR_n = \sum_{z \in \partial_k^+} \alpha_z  \bbE[\check{M}_1(z)\mid \cG_n]$ according to whether $z\notin\Lambda_n$ or $z\in \Lambda_n$, we get that
\begin{equation}
  \label{calculRn}
  \cR_n  =  \sum_{z\in \partial^+_k\setminus \Lambda_n} \ga_z  + \sum_{z\in \partial^+_k\cap\Lambda_n} \alpha_z \sum_{w'\in \Lambda_n, w\notin  \Lambda_n}  G_{\gb}^{\Lambda_n}(z,w') W_{w',w}\,,
\end{equation}
which is larger than the sum of the $\alpha_w^{(n)}$ (there is simply an extra term coming from the sum over $w\in \partial_{k+1}^+$, \textit{i.e.}\ $w\notin \Lambda_n \cup \partial_k^+$).

All together, we have proven that $\bbE[(\cR-\cR_n)^2\,|\, \cG_n] \leq (1+W^{-1})\cR_n^2$, which concludes the proof of Claim~\ref{claim:1}, with the constant $c_1 = 2+W^{-1}$.
\qed

\subsection{Proof of Claim~\ref{claim:2}}
Let us first observe that Claim~\ref{claim:2} can be deduced from Claim~\ref{claim:1} and the following lemma.

\begin{lemma}\label{lem:claim2proof}
There exists some (explicit) constant $c=c_{W}>0$ such that for all $B\geq 2$ and all $n\geq1$, one has a.s.\ 
\[ 
\bbE\left[\cR_{n}^2\indic_{\{\cR_{n}\geq B\cR_{n-1}\}} \; \middle| \; \cG_{n-1}\right] \leq c(B\cR_{n-1})^2 \, \bbP(\cR_{n}\geq B\cR_{n-1} \;|\; \cG_{n-1}) \,.
\]
\end{lemma}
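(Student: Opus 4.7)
The plan is to isolate the dependence of $\cR_n$ on the single new variable $\beta_{z_n}$ revealed between $\cG_{n-1}$ and $\cG_n$, and then to exploit the explicit conditional distribution of the latter. First, we would apply the Schur complement to the block decomposition of $H_\beta^{\Lambda_n}$ with respect to $\Lambda_n = \Lambda_{n-1} \cup \{z_n\}$: introducing
$$
\gamma_n \,:=\, \beta_{z_n} - [W\,G^{\Lambda_{n-1}}_\beta\, W]_{z_n,z_n} > 0 \qquad \text{and} \qquad g(x) \,:=\, \sum_{y' \in \Lambda_{n-1}} G^{\Lambda_{n-1}}_\beta(x,y')\, W_{y',z_n}
$$
(with the convention $g(z_n) := 1$), one obtains $G^{\Lambda_n}_\beta(x,y) = G^{\Lambda_{n-1}}_\beta(x,y)\,\indic_{\{x,y \in \Lambda_{n-1}\}} + g(x) g(y)/\gamma_n$. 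Plugging this into formula~\eqref{calculRn} for $\cR_n$ would then yield the key decomposition $\cR_n = a_n + b_n/\gamma_n$ with $a_n, b_n \geq 0$ both $\cG_{n-1}$-measurable. Next, we would identify the conditional law of $Y_n := 1/\gamma_n$: by Lemma~\ref{condionning_Hd} and Remark~\ref{rem:condionning_Hd}, conditionally on $\cG_{n-1}$ the field $\beta$ on $\bbH_d \setminus \Lambda_{n-1}$ has law $\nu^{\check W}$ with $\check W_{z_n,z_n} = [W G^{\Lambda_{n-1}}_\beta W]_{z_n,z_n}$, so $\gamma_n = \beta_{z_n} - \check W_{z_n,z_n}$ is precisely the effective Schr\"odinger potential at $z_n$ in the conditioned measure. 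Extracting the single-site marginal from~\eqref{nu_eta} and the change of variables $\gamma = \beta_{z_n} - \check W_{z_n,z_n}$ should identify the conditional law of $Y_n$ as inverse Gaussian $\IG(\mu_n, 1)$ with parameter $\mu_n^{-1} = \sum_{y \neq z_n} \check W_{z_n,y}$, consistent with $\cR_{n-1} = a_n + b_n \mu_n$.

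The core analytic input would then be the following uniform right-tail estimate for inverse Gaussian laws, to be established separately: there exists an absolute constant $C > 0$ such that for every $\mu > 0$ and $t \geq 2\mu$,
$$
\bbE\bigl[Y^2\, \indic_{\{Y \geq t\}}\bigr] \,\leq\, C\, t^2\, \bbP(Y \geq t)\,, \qquad Y \sim \IG(\mu, 1)\,.
$$
The strategy here would be a Laplace-type analysis of the explicit density $f_Y(y) = (2\pi)^{-1/2} y^{-3/2} e^{-(y-\mu)^2/(2\mu^2 y)}$: the derivative of the exponent $(y-\mu)^2/(2\mu^2 y) = y/(2\mu^2) - 1/\mu + 1/(2y)$ is bounded below by $3/(8\mu^2)$ on $\{y \geq 2\mu\}$, from which one deduces that both $\bbE[Y^2 \indic_{\{Y \geq t\}}]$ and $t^2 \bbP(Y \geq t)$ are of order $t^{1/2} e^{-t/(2\mu^2)}$ up to a universal multiplicative factor.

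Finally, to assemble the pieces: if $b_n = 0$ then $\cR_n = \cR_{n-1}$ is $\cG_{n-1}$-measurable and the lemma is trivial, so we may assume $b_n > 0$. The event $\{\cR_n \geq B \cR_{n-1}\}$ would then rewrite as $\{Y_n \geq Y_*\}$ with threshold $Y_* := B\mu_n + (B-1) a_n/b_n \geq 2\mu_n$, the last inequality crucially using $B \geq 2$. Bounding $\cR_n^2 \leq 2 a_n^2 + 2 b_n^2 Y_n^2$, applying the IG tail estimate above, and using the elementary inequalities $a_n \leq \cR_{n-1}$ and $b_n Y_* = (B-1) a_n + B b_n \mu_n \leq B \cR_{n-1}$, would yield the lemma with constant $c = 2(1+C)$. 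The hardest part will be the IG tail estimate itself, which must hold uniformly in $\mu$ (over which we have no a priori control), despite the fact that the family $\IG(\mu,1)$ does not admit a simple scaling in~$\mu$; the role of the hypothesis $B \geq 2$ is precisely to place $Y_*$ in the exponentially-decaying right-tail regime, without which such an estimate would genuinely fail.
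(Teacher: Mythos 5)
Your decomposition is essentially the one the paper uses: the rank-one (Schur complement) update of $G^{\Lambda_{n-1}}_\beta$ at the newly revealed site $z_n$ gives exactly the paper's splitting $\cR_n = X_{n-1}Z_n + Y_{n-1}$ with $X_{n-1},Y_{n-1}$ both $\cG_{n-1}$-measurable and $\cR_{n-1}=X_{n-1}+Y_{n-1}$ (your $a_n$ and $b_n/\gamma_n$ correspond to $Y_{n-1}$ and $X_{n-1}Z_n$ after the rescaling $Z_n=\check\eta_{z_n}/\check\beta_{z_n}\sim\IG(1,\check\eta_{z_n})$), and the reduction of $\{\cR_n\geq B\cR_{n-1}\}$ to a one-dimensional right-tail event for an inverse Gaussian variable, with the same threshold computation, is also identical.

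However, the analytic input you rely on is false as stated: there is \emph{no} absolute constant $C$ such that $\bbE[Y^2\indic_{\{Y\geq t\}}]\leq C\,t^2\,\bbP(Y\geq t)$ for all $\mu>0$, all $t\geq 2\mu$ and $Y\sim\IG(\mu,1)$. Indeed $\IG(\mu,1)$ has second moment $\mu^2+\mu^3$, essentially all of which is carried by $\{Y\geq 2\mu\}$ when $\mu$ is large (the fluctuations are of order $\mu^{3/2}\gg\mu$), so at $t=2\mu$ the left-hand side is of order $\mu^3$ while the right-hand side is at most $4\mu^2$. Your Laplace-type heuristic that both sides are of order $t^{1/2}e^{-t/(2\mu^2)}$ is only valid for $t\gtrsim\mu^2$; on the intermediate range $t\in[2\mu,\mu^2]$ the density is essentially the heavy tail $y^{-3/2}$ and the estimate fails. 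The observation you are missing --- and which you explicitly, and wrongly, claim is unavailable --- is that one \emph{does} have a priori control on $\mu_n$: since $\check W_{x,y}\geq W\indic_{x\sim y}$ and $z_n\in\partial_k^+$ always has its neighbour in $\partial_{k+1}^+$ lying outside $\Lambda_n$, one gets $\mu_n^{-1}=\check\eta_{z_n}\geq W$, i.e.\ $\mu_n\leq 1/W$. The tail estimate is then only needed for $\mu$ bounded above, equivalently for $\IG(1,\lambda)$ with shape parameter $\lambda\geq W$ and threshold $A\geq 2$, where your derivative bound on the exponent does yield geometric decay of $\bbP(Y\geq kt)$ with a rate bounded below in terms of $W$, hence a constant $c=c_W$ --- the $W$-dependence announced in the statement of the lemma, which should have signalled that a uniform-in-$\mu$ bound was neither available nor required. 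With this correction the rest of your assembly goes through.
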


\begin{proof}[Proof of Claim~\ref{claim:2} subject to Lemma~\ref{lem:claim2proof}]
The first inequality in Claim~\ref{claim:2} is an immediate consequence of Claim~\ref{claim:1}, since we easily have
\[
\tilde \bbE[\cR^2 \mid T=n] = \frac{\tilde \bbE[\tilde\bbE[\cR^2|\cG_n]\indic_{\{T=n\}}]}{\tilde\bbP(T=n)} \leq \frac{\tilde \bbE[c_1(1+\cR_n^2)\indic_{\{T=n\}}]}{\tilde\bbP(T=n)} = c_1\big(1+\tilde \bbE[\cR_n^2 \mid T=n]\big)\,.
\] 
Regarding the second inequality, let us show that $\tilde \bbE[ \cR_n^2 \mid T=n] \leq c' B^2$ for some constant~$c'$.
Note that since $\cR_0 =1$ $\tilde \bbP$-a.s.\ (this follows from~\eqref{eq:def:cR:decomposition} and the fact that $\bbE[\check M_1(z) \mid \cG_0]=1$ from~\eqref{eq:prop:VRJP:2ndmom}), one has $T\ge1$ a.s. 
Let us write, for $n\ge1$,
\[
\tilde\bbE\big[\cR_{n}^2\, \indic_{\{T=n\}}\big] = \tilde\bbE\left[\cR_{n}^2 \indic_{\{T=n\}}\Big(\indic_{\{\cR_{n}< \tilde B\cR_{n-1}\}} + \indic_{\{\cR_{n}\geq \tilde B\cR_{n-1}\}}\Big) \right] 
\]
where we set $\tilde B\coloneqq 2 B/\cR_{n-1}$. Then one has,
\begin{align*}
\tilde\bbE\big[\cR_{n}^2\, \indic_{\{T=n\}}\big]
&= \tilde\bbE\left[(\tilde B\cR_{n-1})^2 \indic_{\{T=n\}}\right] + \tilde\bbE\left[\cR_{n}^2 \indic_{\{T=n\}}\indic_{\{\cR_{n}\geq \tilde B\cR_{n-1}\}}\right]\\
&\leq (2B)^2\tilde\bbP(T=n) + \tilde\bbE\left[ \indic_{\{T>n-1\}}\bbE\left[\cR_{n}^2 \indic_{\{\cR_{n}\geq \tilde B\cR_{n-1}\}}\,\big|\,\cG_{n-1}\right]\right]\\
&\leq (2B)^2\tilde\bbP(T=n) + c\, \tilde\bbE\left[ (\tilde B\cR_{n-1})^2 \indic_{\{T>n-1\}} \indic_{\{\cR_{n}\geq \tilde B\cR_{n-1}\}} \right] \,,
\end{align*}
where the last inequality follows from Lemma~\ref{lem:claim2proof}, since $\tilde B\geq 2$ a.s.\ on the event $\{T>n-1\}$.
Then, noticing that 
\[
\{T>n-1\}\cap \{\cR_{n}\geq \tilde B\cR_{n-1}\} \,\subset\, \{T = n\}\,,
\]
this gives that $\tilde\bbE[\cR_{n}^2\, \indic_{\{T=n\}}] \leq 4(1+c)B^2 \tilde \bbP(T=n)$, which concludes the proof of the claim.
\end{proof}

\begin{proof}[Proof of Lemma~\ref{lem:claim2proof}]
Recall that $\Lambda_n = \Lambda_{n-1} \cup \{z_n\}$.
Using Proposition~\ref{prop:condGn} with $n-1$ in place of $n$, we get that for $z \in \partial_k^+ \cap \Lambda_{n-1}$,
\[
\check M_1 (z) = \sum_{w'\in \Lambda_{n-1}} G_{\beta}^{\Lambda_{n-1}}(z,w') \Big( \sum_{w\notin \Lambda_n} W_{w',w} \big(\indic_{\{w\in \partial_{k+1}^+\}} + \check M_1(w) \indic_{\{w \in \partial_k^+ \cap \Lambda_n\}} \big)  + W_{w',z_n} \check M_1(z_n)\Big) \,.
\]
Hence, using also that $\bbE[\check M_1 (z) \mid \cG_n ] =1$ for $z\in \partial_k^+\setminus \Lambda_n$ (thanks to Proposition~\ref{prop:VRJP:2ndmom}), we have for $z \in \partial_k^+ \cap \Lambda_{n-1}$
\[
  \bbE\left[\check{M}_1(z)\,\big|\,\cG_{n}\right] = \sum_{w'\in \Lambda_{n-1}} G_{\beta}^{\Lambda_{n-1}}(z,w') \Big( W_{w',z_n} \bbE\left[\check{M}_1(z_{n})\,\big|\,\cG_{n}\right] + \sum_{w\notin \Lambda_n} W_{w',w} \Big) \,.
\]
Hence, decomposing the sum $\cR_{n} = \sum_{z \in \partial_k^+} \alpha_z \check M_1(z)$ according to whether $z\in \Lambda_{n-1}$, $z=z_n$ or $z\notin \Lambda_n$, we obtain that a.s.
\begin{align*}
\cR_{n} &= \bigg(\sum_{z\in \Lambda_{n-1}} \ga_z \sum_{w'\in \Lambda_{n-1}} G^{\Lambda_{n-1}}_\gb(z,w') W_{w',z_{n}} + \ga_{z_{n}}\bigg) \bbE\left[\check{M}_1(z_{n})\,\big|\,\cG_{n}\right] \\
&\qquad \qquad \qquad  +\sum_{z\in \Lambda_{n-1}} \ga_z \sum_{w'\in \Lambda_{n-1}, w\notin \Lambda_{n} }G^{\Lambda_{n-1}}_\gb(z,w') W_{w',w} + \sum_{z\in \partial_k\setminus \Lambda_{n}} \ga_z\\
&\eqqcolon X_{n-1} \cdot Z_n + Y_{n-1}\,,
\end{align*}
where $Z_{n}\coloneqq \bbE[\check{M}_1(z_{n})\mid\cG_{n}]$, and $X_{n-1},Y_{n-1}$ are $\cG_{n-1}$-measurable and a.s.\ positive. In particular, one clearly has that $\cR_{n-1}=X_{n-1}+Y_{n-1}$ a.s. (recall also~\eqref{calculRn}).

Now, let $B>1$ and define $A\coloneqq B+(B-1)Y_{n-1}/X_{n-1}\ge B$, so that $A X_{n-1} +Y_{n-1} =B \cR_{n-1}$ and thus $\cR_{n} = X_{n-1} Z_n + Y_{n-1}\geq B\cR_{n-1}$ if and only if $Z_{n}\geq A$.
We therefore get that 
\begin{equation}
  \label{pfiuuu}
  \begin{split}
  \bbE\left[\cR_{n}^2\indic_{\{\cR_{n}\geq B\cR_{n-1}\}}\mid \cG_{n-1}\right] = 
\bbE\left[(X_{n-1}Z_n+Y_{n-1})^2\indic_{\{Z_{n}\geq A\}}\mid \cG_{n-1}\right] \,,
  \end{split}
\end{equation}
and expanding the square we need to control $\bbE[Z_n\indic_{\{Z_n\geq A\}}\mid\cG_{n-1}]$ and $\bbE[Z_n^2\indic_{\{Z_n\geq A\}}\mid\cG_{n-1}]$ in terms of $\bbP( Z_n\geq A \mid\cG_{n-1})$.

Recalling the properties of the VRJP from Lemma~\ref{lem:conditionalbeta}-Lemma~\ref{condionning_Hd} (and Remark~\ref{rem:condionning_Hd}), we notice by conditioning on $\cG_{n-1}$ that one can write
\begin{equation}
Z_n=\bbE\left[ \check{M}_1(z_{n}) \; \big| \; \cG_{n} \right] \eqqcolon \frac{\check{\eta}_{z_n} }{\check{\beta}_{z_n}}\,,
\end{equation}
where we set $\eta_x\coloneqq \sum_{y\in \partial_k^+\setminus\Lambda_{n}} W \indic_{\{y\sim x\}}$ for all $x\in\Lambda_n$, and
\begin{equation*}
\check\eta_{z_n}\,\coloneqq\, \eta_{z_n}+W_{\Lambda_{n-1},\{z_n\}}((H_\beta)_{\Lambda_{n-1},\Lambda_{n-1}})^{-1} \eta_{\Lambda_{n-1}}\,\ge\,W\,,
\end{equation*}
as in~\eqref{eq:lem:conditionalbeta}; and where, conditionally on $\cG_{n-1}$, $(\check{\beta}_{z_n})^{-1}$ has law $\nu^{0,\check{\eta}}_{\{z_n\}}$. In particular, one has $\cL(Z_n\mid \cG_{n-1}) = \IG(1, \check{\eta}_{z_n})$. 

We now use the following result on inverse Gaussian distributions, whose proof is postponed to the end of the section; it corresponds to verifying Condition~1 in~\cite{FJ23}.

\begin{lemma}\label{lem:claim2proof:IGlemma}
Let $\lambda_0>0$. There exists some constant $c=c_{\lambda_0}>1$ such that, for all $\lambda\ge\lambda_0$, and $A\ge 2$ and $Z\sim \IG(1,\lambda)$, one has
\begin{equation}
\bbE\left[Z^2\indic_{\{Z\geq A\}}\right] \leq c A^2\,\bbP(Z\geq A)\,.
\end{equation} 
\end{lemma}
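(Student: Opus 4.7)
The strategy is a direct computation using the explicit density of $\IG(1,\lambda)$, namely $f(z) = \sqrt{\lambda/(2\pi z^3)}\, e^{-g(z)}$ for $z>0$, where $g(z) := \lambda(z-1)^2/(2z)$. The key observation is that $g'(z) = \tfrac{\lambda}{2}(1-z^{-2})$ lies in $[\tfrac{3\lambda}{8}, \tfrac{\lambda}{2}]$ whenever $z\geq 2$, so $g$ is essentially affine with slope proportional to~$\lambda$ on the domain of interest. The plan is to bound the numerator and denominator of $\bbE[Z^2\indic_{\{Z\geq A\}}]/\bbP(Z\geq A)$ separately and then take the ratio; the bounds will share the nonpolynomial factors $\lambda^{-1/2} e^{-g(A)}$, so these cancel out and only a power of $A$ remains.

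For the upper bound on the numerator, I would write
\[
\bbE[Z^2\indic_{\{Z\geq A\}}] = \sqrt{\tfrac{\lambda}{2\pi}}\int_A^\infty z^{1/2} e^{-g(z)}\, dz,
\]
use $g(z)-g(A) \geq \tfrac{3\lambda}{8}(z-A)$ (valid for $z\ge A\geq 2$) to factor out $e^{-g(A)}$, and then apply the elementary estimate $(A+u)^{1/2}\leq A^{1/2}+u^{1/2}$ to reduce the remaining integral to Gamma integrals. The subleading $\lambda^{-3/2}$ contribution is absorbed into the leading term $A^{1/2}/\lambda$ using $A\geq 2$ and $\lambda\geq\lambda_0$, yielding
\[
\bbE[Z^2\indic_{\{Z\geq A\}}] \leq C_{\lambda_0}\, \lambda^{-1/2}\, A^{1/2}\, e^{-g(A)}.
\]

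For the matching lower bound on $\bbP(Z\geq A)$, I would integrate the density only on the short interval $[A, A+1/\lambda]$. On this interval, $g'(z)\leq \lambda/2$ gives $g(z)-g(A)\leq \tfrac12$, while $\lambda\geq \lambda_0$ together with $A\geq 2$ makes $z^{-3/2}$ comparable to $A^{-3/2}$ up to a constant depending on $\lambda_0$ (because $A+1/\lambda \leq (1+\tfrac{1}{2\lambda_0})A$). The length of the interval contributes the factor $1/\lambda$, which gives
\[
\bbP(Z\geq A) \geq c_{\lambda_0}\, \lambda^{-1/2}\, A^{-3/2}\, e^{-g(A)}.
\]
Dividing the two estimates, the factors $\lambda^{-1/2} e^{-g(A)}$ cancel, leaving precisely a bound of order $A^2$ times a constant depending only on~$\lambda_0$.

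The only delicate point is uniformity in $\lambda$: this is where the hypothesis $\lambda\geq \lambda_0>0$ really enters, since otherwise $1/\lambda$ could be arbitrarily large and $A+1/\lambda$ could no longer be controlled by $A$ in the lower bound on $\bbP(Z\geq A)$. Everything else is elementary manipulation of Gaussian-like tails, and the chosen window of width $1/\lambda$ for the lower bound is exactly tuned so that $g$ varies by an $O(1)$ amount and the resulting comparison with the upper bound is sharp up to the desired $A^2$ factor.
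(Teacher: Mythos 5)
Your proposal is correct, and the computations you sketch do go through: the upper bound on the numerator via $g(z)-g(A)\ge\frac{3\lambda}{8}(z-A)$ and the lower bound on $\bbP(Z\ge A)$ over the window $[A,A+1/\lambda]$ both give expressions of the form $(\text{const})\cdot\lambda^{-1/2}e^{-g(A)}$ times a power of $A$, and the $A^2$ drops out of the ratio as you describe. You also correctly pinpoint where $\lambda\ge\lambda_0$ enters (absorbing the $\lambda^{-3/2}$ term into $A^{1/2}\lambda^{-1}$ and controlling $A+1/\lambda\le(1+\tfrac{1}{2\lambda_0})A$). However, your route is genuinely different from the paper's. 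The paper first establishes the multiplicative tail-decay estimate
\[
\bbP(Z\ge x+t)\le e^{-t\lambda/4}\,\bbP(Z\ge x)\qquad\text{for all } x\ge 2,\ t\ge 0,
\]
obtained from the elementary inequality $\tfrac{(y+t-1)^2}{2(y+t)}\ge\tfrac{(y-1)^2}{2y}+\tfrac{3t}{8}$ for $y\ge 2$, and then bounds $\bbE[Z^2\indic_{\{Z\ge A\}}]$ by decomposing $\{Z\ge A\}$ into the annuli $\{kA\le Z<(k+1)A\}$ and summing the resulting geometric series. That argument never requires a \emph{lower} bound on $\bbP(Z\ge A)$ --- it trades that for the doubling-type tail inequality --- and so it is slightly softer and avoids the cancellation bookkeeping. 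Your approach, on the other hand, yields explicit two-sided tail asymptotics $\bbP(Z\ge A)\asymp\lambda^{-1/2}A^{-3/2}e^{-g(A)}$ and $\bbE[Z^2\indic_{\{Z\ge A\}}]\asymp\lambda^{-1/2}A^{1/2}e^{-g(A)}$, which makes the source of the factor $A^2$ transparent and is a bit more informative, at the cost of juggling the subleading Gamma integral. Either method is a valid, elementary verification of the FJ23 Condition~1 for the inverse Gaussian.
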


Since one has $\check{\eta}_{z_n}\ge W$ a.s., this yields that
\begin{equation}
\label{eq:lem:claim2proof:IGlemma:1}
\bbE\left[Z_n^2\indic_{\{Z_n\geq A\}}\mid\cG_{n-1}\right]\,\leq\, cA^2\,\bbP(Z_n\geq A \mid \cG_{n-1})\,,
\end{equation} 
for a constant $c=c_{W}>0$ uniform in $A\geq 2$ and $n\geq1$. 
Furthermore, a conditional Cauchy--Schwarz inequality yields that
\begin{equation}\label{eq:lem:claim2proof:IGlemma:2}
\bbE\left[Z_n\indic_{\{Z_n\geq A\}}\mid \cG_{n-1}\right]\,\leq\, c^{1/2}\, A\,\bbP(Z_n\geq A \mid \cG_{n-1})\,.
\end{equation} 
Going back to \eqref{pfiuuu}, expanding the square and using~\eqref{eq:lem:claim2proof:IGlemma:1}-\eqref{eq:lem:claim2proof:IGlemma:2}, this yields for $A\geq 2$ (having $B\geq 2$ is enough)
\begin{align*}
\bbE\left[\cR_{n}^2\indic_{\{\cR_{n}\geq B\cR_{n-1}\}}\mid \cG_{n-1}\right] & \leq (c^{1/2} A X_{n-1}+Y_{n-1})^2 \bbP(Z_{n}\geq A \mid \cG_{n-1}) \\ 
&\leq c(B\cR_{n-1})^2 \bbP(\cR_{n}\geq B\cR_{n-1} \mid \cG_{n-1})\,,
\end{align*}
where we have also used that $AX_{n-1}+Y_{n-1} =  B \cR_{n-1}$ by definition.
This concludes the proof.
\end{proof}

\begin{proof}[Proof of Lemma~\ref{lem:claim2proof:IGlemma}]
Let $\lambda\ge\lambda_0$ and $Z\sim\IG(1,\lambda)$. Recall that the inverse Gaussian distribution $\IG(1,\lambda)$ has density
\[
\bbP(Z\in dt)\,=\, \frac{\sqrt{\lambda}}{\sqrt{2\pi}t^{3/2}} \exp\left(-\lambda\frac{(t-1)^2}{2t}\right)\indic_{\{t\ge 0\}} dt\,,\qquad t\in\R\,.
\]
In particular, one can show that for all $x\geq 2$ and $t\ge0$
\[
\bbP(Z\ge x+t)\,\leq\, e^{-t\lambda/4} \bbP(Z\ge x)\,\leq\, e^{-t\lambda_0/4} \bbP(Z\ge x)\,.
\]
Indeed, this follows from a direct computation and the observation that $\frac{(y+t-1)^2}{2(y+t)} \ge \frac{(y-1)^2}{2y}+\frac{3t}{8}$ for all $y\ge 2$ and $t\ge0$ (details are left to the reader). Therefore, for all $A\ge 2$ we have that
\begin{align*}
  \bbE\left[Z^2\indic_{\{Z\geq A\}}\right] = \sum_{k=1}^{\infty} \bbE\left[Z^2\indic_{\{Z\in [k A,(k+1)A]\}}\right] 
  & \leq \sum_{k=1}^{\infty} (k+1)^2 \bbP(Z \geq kA)  \\
  & \leq \sum_{k=1}^{\infty} (k+1)^2  e^{-(k-1) A\lambda_0/4}\bbP(Z\geq A) \,,
\end{align*}
which concludes the proof, with the constant $c_{\lambda_0}= \sum_{k=1}^{\infty} (k+1)^2  e^{-(k-1)\lambda_0/2}$.
\end{proof}

\section{Proof of Theorem~\ref{thm:allmoments}}
\label{sec_allmoments}

In this section, we focus on the dimension $d\geq 4$.
We consider the $\bbZ^d$-VRJP martingale defined in~\eqref{polymer_intro}; we recall its definition for the reader's convenience.
Consider the box $V_n = \llbracket-n,n\rrbracket^d\subset\bbZ^d$ and its outer boundary $\partial V_n$ and let
\begin{equation}
\label{def:Mn:allmoments}
\psi_{n} = \sum_{\sigma \colon 0\xrightarrow{V_n} \partial V_n} W^{|\sigma|} \prod_{i=1}^{|\sigma|-1} \beta_{\sigma_i}^{-1} \,,
\end{equation}
Then $(\psi_n)_{n\geq 0}$ is a martingale with respect to the filtration $\cF_n =\gs\{\gb_x,x\in V_n\}$; it converges a.s. towards some limit $\psi_\infty$ which characterizes the recurrence or transience of the VRJP in $\bbZ^d$ (recall Lemma~\ref{lem_Mmn}). 

Recall that we defined the critical point for the symmetric slab of width \(2m+1\):
\[
W_{c}^{(m)}\coloneqq W_c\left(\bbZ^{d-1}\times\llbracket -m,m\rrbracket\right) \,.
\]
Our goal is to show that if we fix some $m \in \bbN$ and some $W> W_c^{(m)}$, then the martingale $(\psi_n)_{n\geq 0}$ is bounded in $L^p$ for all $p\geq 1$.
The proof is performed in two steps: first, thanks to the monotonicity Theorem~\ref{thm_monotonicity}, we reduce the question to the study of a simpler graph; then, we control moments of order $p$ in this toy model.

\subsection{Comparison with a toy model}

Let us introduce a simplified graph, which is essentially one-dimensional, with a cemetery vertex.
It is represented in Figure~\ref{fig:simplifiedgraph} below.

\begin{definition}\label{def:simplegraph}
Let $n,m\ge0$, $\eps>0$ and $\mu_0$ some probability measure on $(0,+\infty)$. We define the graph $\tilde\cG=\tilde\cG_{\eps,\mu_0}^{n,m}$ with (random) weights $\tilde W$ by the following construction:
\begin{itemize}
  \item its vertices are given by $V\coloneqq\llbracket -n,n\rrbracket\cup\{\star\}$, where $\star$ is called the cemetery,
  \item for each $\{i,i+1\}\subset \llbracket -n,n\rrbracket$, there is an edge between $i$ and $i+1$ with weight $\tilde W_{i,i+1}\coloneqq\eps$. Similarly, the endpoints are connected with the cemetery with $\tilde W_{-n,\star}=\tilde W_{n,\star}\coloneqq\eps$,
  \item for each $i\in (2m+1)\bbZ$ such that $|i|\le n-m-2$, there is an edge between $i$ and $\star$ with weight $\tilde W_{i,\star}\coloneqq \tilde W_i$, where the $\tilde W_i$ are i.i.d.\ random variables with law $\mu_0$.
\end{itemize}
\begin{figure}[ht]\begin{center}\vspace{-4mm}
  \includegraphics[width=.8\linewidth]{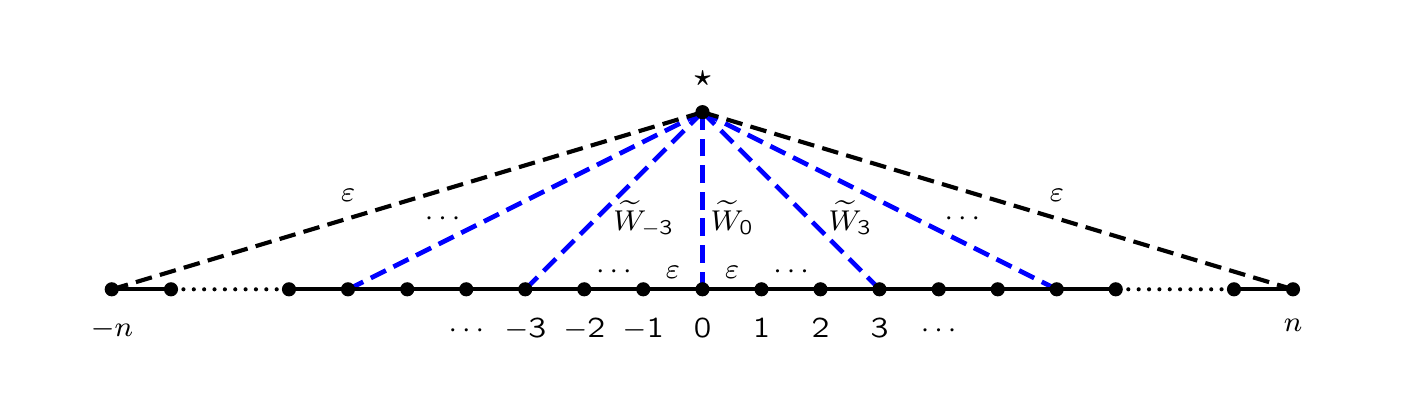}\vspace{-4mm}
  \caption{\footnotesize Illustration for the simplified graph $\tilde\cG$: in this picture $m=1$. The graph has vertex set $\llbracket -n,n\rrbracket\cup\{\star\}$. Each $i\in\llbracket -n,n\rrbracket$ is connected with its neighbours in $\llbracket -n,n\rrbracket$ by conductances $\eps$. 
  Moreover, the cemetery $\star$ is connected (represented with dashed lines) to the endpoints $-n$ and $n$ by conductances $\eps$, and to all vertices $i\in V\cap (2m+1)\bbZ$ which are not too close to the endpoints (i.e.\ $|i|\le n-m-2$) by i.i.d. conductances $\tilde W_i\sim \mu_0$ (in blue).
  }
  \label{fig:simplifiedgraph}
  \end{center}\end{figure}
\end{definition}

Then, we define the VRJP-polymer partition function associated with the toy model, as in~\eqref{polymer_intro}.
Let $n,m\ge0$, $\eps>0$ and~$\mu_0$ a probability measure on $(0,+\infty)$, and consider a realization of the random weighted graph $\tilde\cG\coloneqq \tilde\cG_{\eps,\mu_0}^{n,m}$. Conditionally on $\tilde\cG$, let $\gb\sim\nu^{\tilde W}_{\tilde\cG}$, and define
\begin{equation}\label{def:Mntilde:allmoments}
\tilde M = \tilde M\big(\tilde\cG\big) \coloneqq \sum_{\sigma \colon 0\xrightarrow{\llbracket-n,n\rrbracket} \{\star\}}  \tilde W_{\gs_0,\gs_1} \prod_{i=1}^{|\sigma|-1} \tilde W_{\gs_i,\gs_{i+1}} \beta_{\sigma_i}^{-1} \,.
\end{equation}

We prove in the next section that for $\gep$ sufficiently small (how small depends on $\mu_0$), the $p$-th moment of $\tilde M$ is bounded by a constant which does not depend on $n$, see Lemma~\ref{lem:allmoments:tildeG}.
But for now, let us compare the martingale~\eqref{def:Mn:allmoments} with $\tilde M$ on the graph $\tilde\cG$, with suitable parameters.

\begin{lemma}\label{lem:allmoments:comparison}
Let $n,m\ge0$ and set $2 \eps := W- W_{c}^{(m)}$. 
Then there exists a probability distribution~$\mu_{n}^{m,\eps}$ on $(0,+\infty)$ such that for any convex function $f:\R_+\to\R_+$, one has
\begin{equation}\label{eq:lem:allmoments:comparison}
\bbE\left[f(\psi_n)\right]\,\leq\, \bbE\left[f\left(\tilde M\left(\tilde\cG_{\eps,\mu_{n}^{m,\eps}}^{n,m}\right)\right)\right]\,,
\end{equation}
where $\tilde M$ is defined in~\eqref{def:Mntilde:allmoments};
we stress that the last expectation if over both $\beta$ and $\tilde W$ (\textit{i.e.}\ $\mu_{n}^{m,\eps}$).
\end{lemma}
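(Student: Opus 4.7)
The strategy is to use Poudevigne's monotonicity (Theorem~\ref{thm_monotonicity}) to reduce $V_n$ equipped with uniform conductances $W$ to a simpler modified graph, whose polymer partition function can then be identified with $\tilde M$ on $\tilde\cG$ via the restriction/conditioning properties of the $\beta$-potential (Lemma~\ref{lem:conditionalbeta}). The plan has three stages.

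First, I would set up the geometry inside $V_n$. I take a 1D line $L = \{0\}^{d-1}\times \llbracket -n,n\rrbracket$, and for each $i\in \mathcal{I} \coloneqq \{k(2m+1) : k\in\bbZ,\, |k(2m+1)|\leq n-m-2\}$ I let $v_i = (0,\ldots,0,i)$ and $S_i = \llbracket -n,n\rrbracket^{d-1}\times \llbracket i-m,i+m\rrbracket$ be a slab of width $2m+1$ around $v_i$ in the last direction; by construction the $S_i$'s are pairwise disjoint. I then define modified conductances $W^*$ on $V_n\cup\partial V_n$ by setting: $W^*_{xy}=\eps$ on the edges of $L$ (including the outward edges from the endpoints of $L$ to $\partial V_n$); $W^*_{xy}=W$ on edges inside $S_i\setminus L$, on the edges from $v_i$ to its neighbours in $S_i\setminus L$, and on the edges from $S_i\setminus L$ to $\partial V_n$; and $W^*_{xy}=0$ on all other edges, in particular on inter-slab edges off $L$ and on edges between $L\cap S_i\setminus\{v_i\}$ and $S_i\setminus L$. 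Since $\eps<W$, we have $W^*\leq W$ coordinate-wise, so Theorem~\ref{thm_monotonicity} yields $\bbE[f(\psi_n)]\leq \bbE[f(\psi_n^{W^*})]$ for every convex $f\geq 0$.

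Next, I would analyze the modified graph $\cG^*$. By construction, each slab interior $S_i\setminus L$ is connected to the rest of $\cG^*$ only through the single vertex $v_i$, and distinct slab interiors lie at graph distance $\geq 2$ in $\cG^*$. The $1$-dependence of the $\beta$-potential (Definition~\ref{beta}) then guarantees that the restrictions $(\beta_{S_i\setminus L})_{i\in\mathcal{I}}$ are jointly independent conditionally on $\beta_L$. I would define $\tilde W_i$ to be the effective polymer weight from $v_i$ to $\partial V_n$ through $S_i\setminus L$, read off from Lemma~\ref{lem:conditionalbeta}(ii) (the Schur-complement formulas for $\check W,\check\eta$) by integrating out $\beta_{S_i\setminus L}$; by the conditional independence, the $(\tilde W_i)_{i\in\mathcal{I}}$ are i.i.d., and I take $\mu_n^{m,\eps}$ to be their common law.

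Finally, decomposing any polymer path $\sigma \colon 0\to \partial V_n$ in $\cG^*$ into its trace on $L$ and its excursions into the slab interiors, then integrating out $\beta_{V_n\setminus L}$ via Lemma~\ref{lem:conditionalbeta}, I would recognize the resulting expression as the partition function $\tilde M$ on $\tilde\cG_{\eps,\mu_n^{m,\eps}}^{n,m}$: the line $\llbracket -n,n\rrbracket$ with $\eps$-conductances corresponds to $L$, the cemetery $\star$ to $\partial V_n$, the outward $\eps$-edges at $\pm n$ to the endpoints of $L$, and the i.i.d.\ weights $\tilde W_i\sim\mu_n^{m,\eps}$ to the slab escape routes. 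This would yield $\psi_n^{W^*}(0) \stackrel{d}{=} \tilde M(\tilde\cG_{\eps,\mu_n^{m,\eps}}^{n,m})$, which combined with the first step gives~\eqref{eq:lem:allmoments:comparison}. The hard part will be this last identification: one must carefully track the boundary term $\hat\eta$ appearing in Lemma~\ref{lem:conditionalbeta} to verify that the marginal law of $\beta_L$ on $\cG^*$ matches the $\beta$-law underlying $\tilde M$ on $\tilde\cG$, and that resumming the slab excursions produces exactly the i.i.d.\ weights $\tilde W_i$ without any spurious residual factors.
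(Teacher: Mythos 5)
Your overall strategy (Poudevigne monotonicity to surger $V_n$ into a line-plus-slabs graph, then the restriction/conditioning calculus of Lemma~\ref{lem:conditionalbeta} to identify the result with $\tilde\cG$) is the same as the paper's, but you are missing the one idea that makes the construction usable: the \emph{duplication of the central line}. In your graph $\cG^*$ the line $L$ carries conductance $\eps$ and the slab interiors are attached only at the centers $v_i$, with the rest of the central column $L\cap S_i\setminus\{v_i\}$ disconnected from $S_i\setminus L$. Consequently the effective weight $\tilde W_i$ you read off is the partition function on the box $\llbracket-n,n\rrbracket^{d-1}\times\llbracket-m,m\rrbracket$ \emph{with its central column deleted}, not the standard slab martingale $\psi_n^{(m)}$. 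This does produce \emph{some} probability measure on $(0,+\infty)$, so the literal statement of the lemma survives, but it destroys the sequel: Lemma~\ref{lem:nondegenerate} needs $\mu_n^{m,\eps}$ to converge to a non-degenerate limit, which the paper obtains because its $\tilde W_i$ is exactly $\psi_n^{(m)}$ at the supercritical conductance $W_c^{(m)}+\eps$. For your modified slab, deleting the column can only \emph{raise} the critical point (Theorem~\ref{thm_monotonicity} again), and nothing guarantees that $W=W_c^{(m)}+2\eps$ still lies above it; so $\lim_n \mu_n^{m,\eps}$ could be degenerate at $0$ and the hypothesis $\mu_0((0,\eta_0))<\eps_0$ of Lemma~\ref{lem:allmoments:tildeG} could fail for every $\eta_0$. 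The paper's fix is to add a \emph{second} line $L'$ with conductance $\eps$, attached to each slab center by a unit-conductance edge, while the original line stays inside the slabs with conductance $W_c^{(m)}+\eps$; the comparison $\bbE[f(M^{(1)})]\le\bbE[f(M^{(2)})]$ for the duplicated graph is then obtained by contracting the edges $\{u_i,u'_i\}$ (which, by \cite[Theorem~5]{Poudevigne24}, decreases $\bbE[f(M)]$ and recovers a graph dominating $\cG_1$, since the two parallel lines recombine into one of conductance $(W_c^{(m)}+\eps)+\eps=W$). This step has no counterpart in your argument and cannot be bypassed by monotonicity alone, since you need to \emph{preserve} the full supercritical slab while \emph{lowering} the line conductance through it.

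Two further points. First, your claim that the fields $(\beta_{S_i\setminus L})_{i}$ are ``jointly independent conditionally on $\beta_L$'' is false: by Lemma~\ref{lem:conditionalbeta}(ii), conditioning on $\beta_L$ creates Schur-complement conductances $W_{U^c,L}((H_\beta)_{L,L})^{-1}W_{L,U^c}$ coupling any two slab interiors through the connected line $L$. What is true, and what you need, is the \emph{unconditional} independence of the $\beta_{S_i\setminus L}$ (the restriction of the law to $\bigcup_i (S_i\setminus L)$ factorizes because $W^*$ is block-diagonal there and the blocks are at graph distance $\ge 2$ in $\cG^*$); the conditioning must then be run in the opposite direction (condition on the slabs, read off the law of $\beta_L$), as the paper does. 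Second, the ``spurious residual factors'' you worry about are genuine: integrating out $S_i\setminus L$ produces a self-loop $\check W_{v_i,v_i}>0$ at each $v_i$, absent from $\tilde\cG$. This is benign --- the self-loop can be absorbed by the shift $\beta_{v_i}\mapsto\beta_{v_i}-\check W_{v_i,v_i}$, which is compatible with both the path expansion and the measure $\nu^{\check W,\check\eta}$, and the same absorption is implicit in the paper's Step~4 --- but it should be carried out, since it is exactly the place where a wrong bookkeeping of $\check W$ and $\check\eta$ would invalidate the identification with $\tilde M$.
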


\begin{proof}[Proof of Lemma~\ref{lem:allmoments:comparison}]
Let $n,m\ge0$, $\eps = \frac12 (W - W_{c}^{(m)})>0$ be fixed;  let $f:\R_+\to\R_+$ be a convex function. 
The strategy is to perform a series of graph modifications, which are illustrated in Figure~\ref{fig:graphcomparison} below. 
We define four consecutive graphs $\cG_j=(V_j,E_j)$, $0\le j\le3$, with a cemetery state~$\star$ (to lighten the notation we omit $\star$ in the vertex set $V_j$) and equipped with conductances $W=W^{(j)}=(W^{(j)}_e)_{e\in E_j}$.
The goal is to obtain a graph $\cG_j$ as a modification of the preceding one $\cG_{j-1}$, and such that the associated VRJP partition function 
\begin{equation}\label{eq:lem:allmoments:comparison:1}
M^{(j)} = \sum_{\sigma \colon 0\xrightarrow{V_j} \{\star\}} W^{(j)}_{\gs_0,\gs_{1}} \prod_{i=1}^{|\sigma|-1} W^{(j)}_{\gs_i,\gs_{i+1}} \beta_{\sigma_i}^{-1} \,,
\end{equation}
with $\gb\sim\nu^{W^{(j)}}_{\cG_j}$, verifies $\bbE\left[f(M^{(j)})\right]\leq \bbE\left[f(M^{(j+1)})\right]$ for all $0\le j<3$.

\begin{figure}[tbp]
  \begin{center}
  \begin{subfigure}{.4\linewidth}
    \begin{center}
      \includegraphics[width=\linewidth]{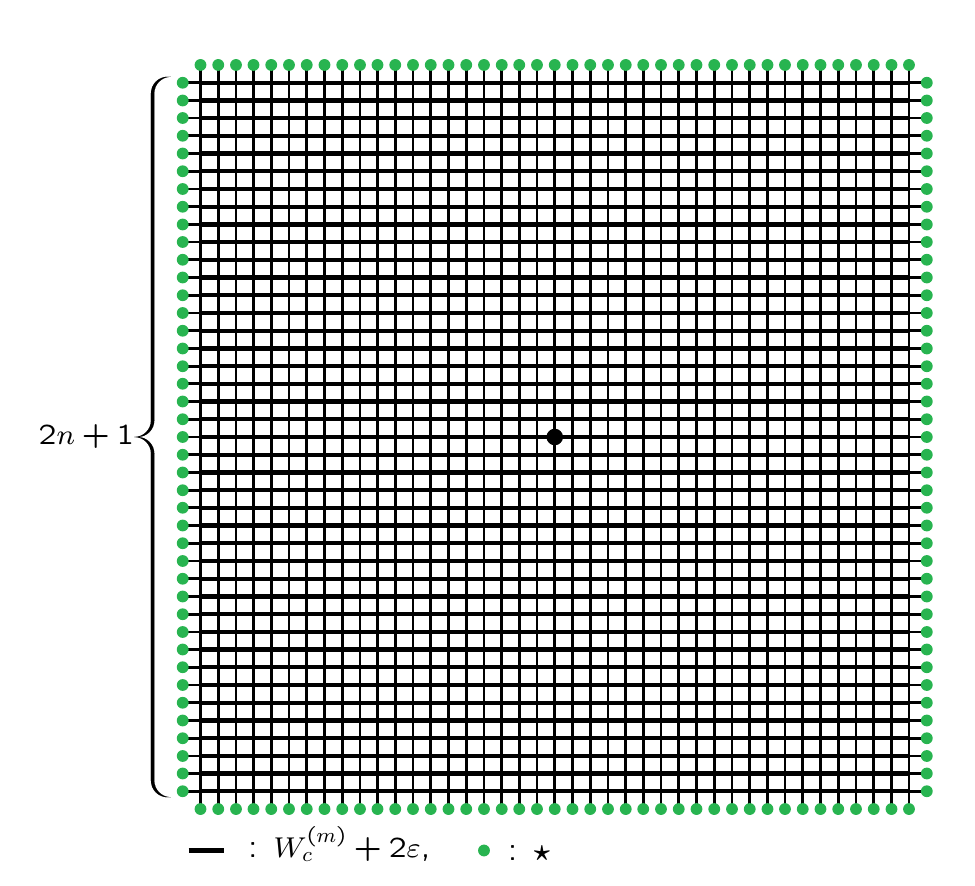}
      \caption{Graph $\cG_0$}
    \end{center}
  \end{subfigure}
  \qquad\qquad
  \begin{subfigure}{.4\linewidth}
    \begin{center}
      \includegraphics[width=\linewidth]{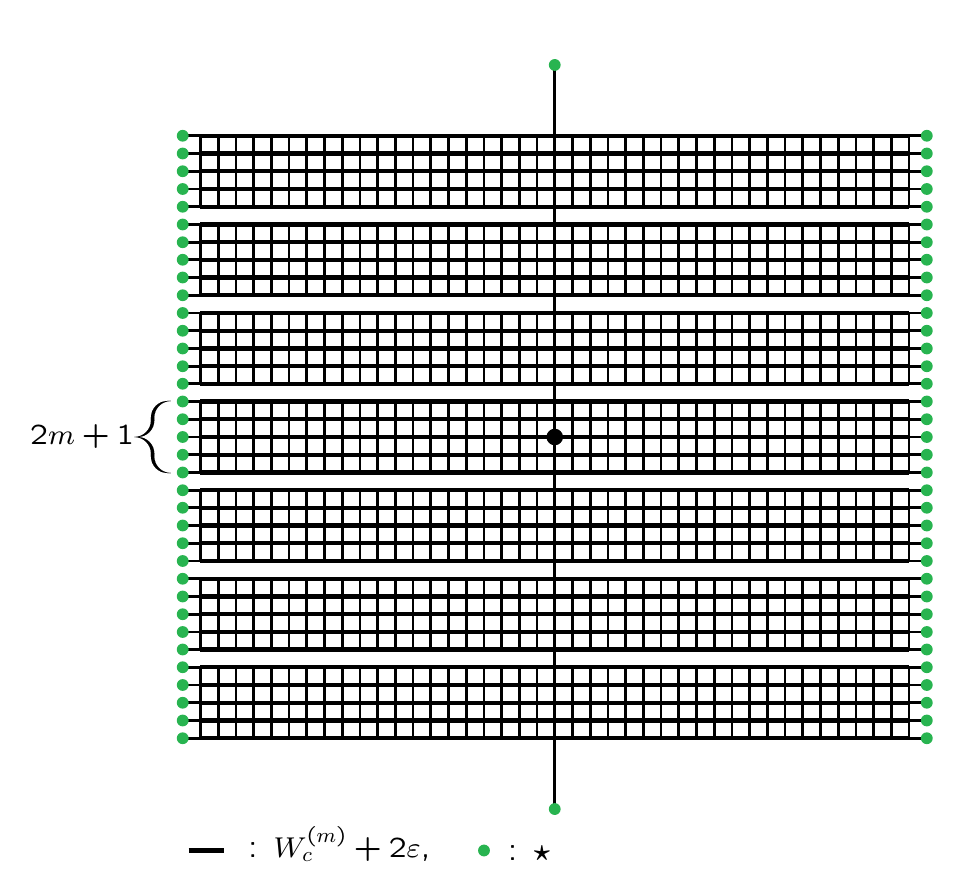}
      \caption{Graph $\cG_1$}
    \end{center}
  \end{subfigure}\\
  \begin{subfigure}{.4\linewidth}
    \begin{center}
      \includegraphics[width=\linewidth]{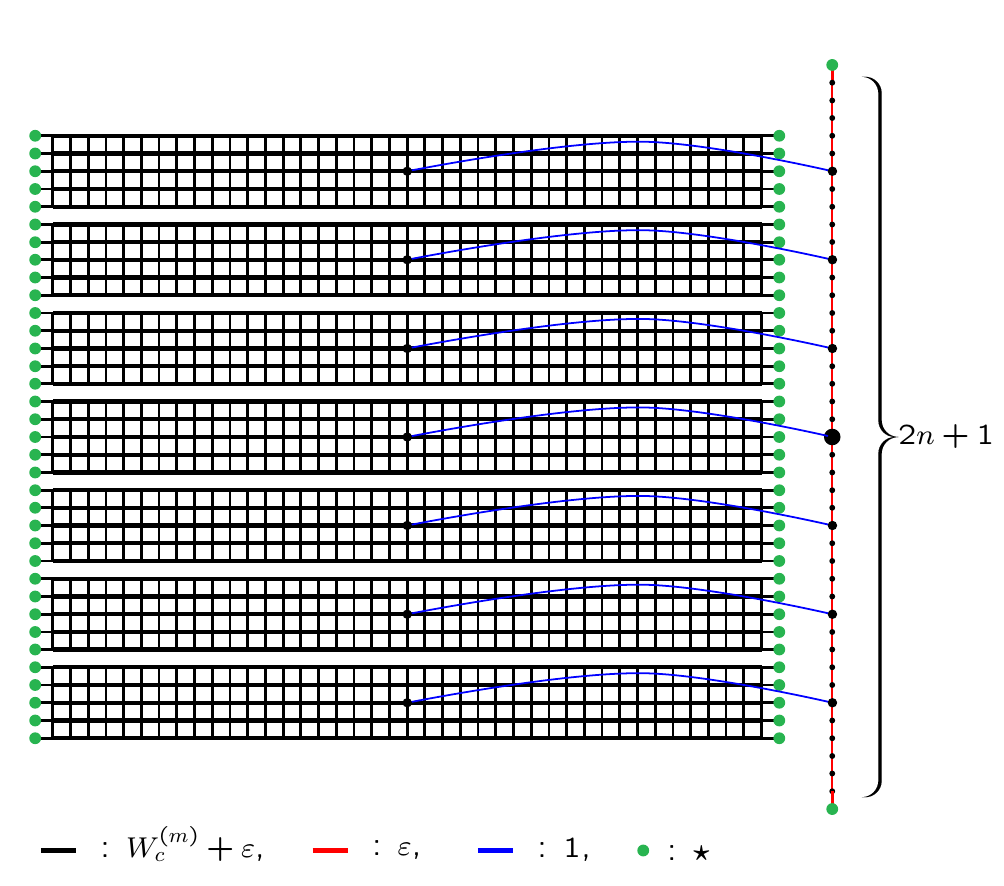}
      \caption{Graph $\cG_3$}
    \end{center}
  \end{subfigure}
  \qquad
  \begin{subfigure}{.4\linewidth}
    \begin{center}
      \includegraphics[scale=.4]{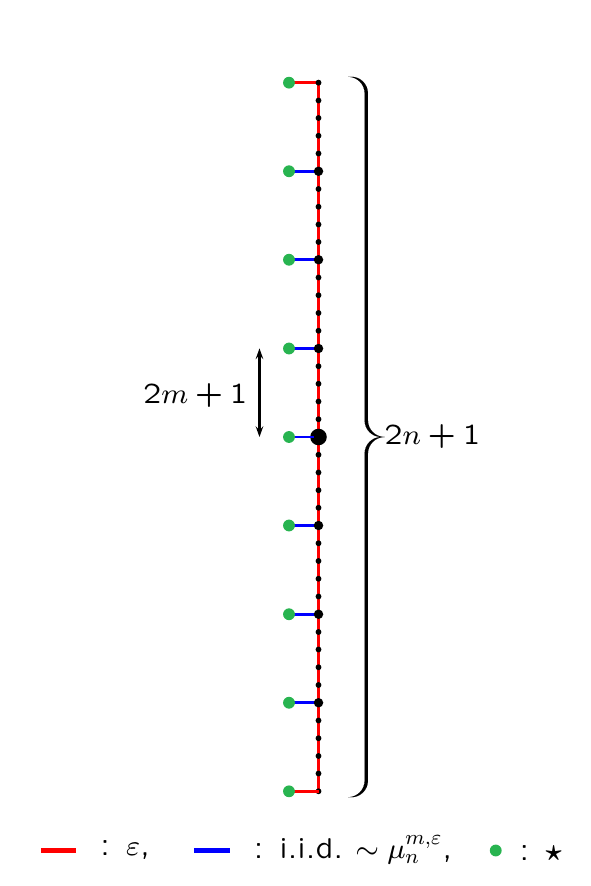}
      \caption{Graph $\tilde \cG$}
    \end{center}
  \end{subfigure}
\caption{\footnotesize Illustration of the different graphs used in the comparison: in each figure the green dots are identified with the cemetery $\star$ of the graph; the larger black dot represents the starting point of the VRJP. 
Step 0. The original graph on which $\psi_n$ is defined is denoted $\cG_0$, with vertex set $V_n=\llbracket-n,n\rrbracket^d$; all inner boundary vertices are connected to the cemetery $\star$, see~(A).
Step 1. The graph $\cG_1$ is obtained by removing edges so that the remaining ones form parallel slabs of width $2m+1$, connected along the vertical line $\{0\}^{d-1}\times \llbracket-n,n\rrbracket$, see~(B).
Step 2. The graph $\cG_2$ (not represented) is defined by duplicating the vertical line in $\cG_1$. 
Step 3. The graph $\cG_3$ is obtained by removing some edges and lowering the weights: its vertical line has conductances $\eps$, edges in the slabs have conductance $W_{c}^{(m)}+\eps$, the center of each slab is connect to the vertical line through an edge with conductance $1$, see~(C). 
Step 4. The graph $\cG_3$ is equivalent to a realization of the simple graph $\tilde\cG$ defined in Definition~\ref{def:simplegraph}, with blue edges having i.i.d.\ weight distribution $\mu_{n}^{m,\eps}$.
}
\label{fig:graphcomparison}
\end{center}\end{figure}

\medskip
\paragraph{\it Step $0$: initial graph.}
The graph $\cG_0$ is defined by the box $V_0\coloneqq \llbracket-n,n\rrbracket^d$, with conductances $W= W_c^{(m)} +2\eps$; this is illustrated in Figure~\ref{fig:graphcomparison}-A.
In particular, one has $M^{(0)}=\psi_n$ in law, where the equality in distribution of the fields $\gb$ follows from the restriction formula to $\llbracket-n,n\rrbracket^d$, recall Definition~\ref{beta_infinite}.
\medskip

\paragraph{\it Step $1$: edge removal.} 
The graph $\cG_1$ is obtained from $\cG_0$ by removing edges in such a way that the graph forms slabs isomorphic to $\llbracket-n,n\rrbracket^{d-1}\times\llbracket-m,m\rrbracket$, which are grafted to the vertical line $L\coloneqq\{0\}^{d-1}\times \llbracket-n,n\rrbracket$. This graph is illustrated in Figure~\ref{fig:graphcomparison}-B. Moreover, the inequality $\bbE\left[f(M^{(0)})\right]\leq \bbE\left[f(M^{(1)})\right]$ follows readily from Theorem~\ref{thm_monotonicity}.
\medskip

\paragraph{\it Step 2: vertical line duplication.} 
The next graph $\cG_2$ is obtained with a \emph{duplication} of the vertical line in $\cG_1$. More precisely, let us denote with $L\eqqcolon\{u_{-n},\ldots,u_n\}\subset V_1$: then we let $L'=\{u'_{-n},\ldots,u'_n\}$ be a copy of $L$, $V_2\coloneqq V_1\cup L'$, and we endow the graph with the conductances $W'$ defined as follows:
\begin{itemize}
  \item For $-n\leq i<n$, we let $W'_{u_i,u_{i+1}}\coloneqq W_{c}^{(m)}+\eps$ and $W'_{u'_i,u'_{i+1}}\coloneqq \eps$. 
  \item For any $u\in V_1$, $v\in V_1\setminus L$ with $\{u,v\}\in E_1$, we let $W'_{u,v}\coloneqq W_{c}^{(m)}+2\eps$ (\textit{i.e.}\ we keep the same conductance as in the graph $\cG_1$). 
  \item For any $i$, $|i|\le n$, we fix $W'_{u_i,u'_i}\coloneqq 1$.
\end{itemize}
Then, we claim that $\bbE\left[f(M^{(1)})\right]\leq \bbE\left[f(M^{(2)})\right]$. Indeed this follows from the fact that for any $|i|\leq n$, one can contract $\{u_i,u'_i\}\in E_2$ into a single vertex (this amounts to increasing the conductance in-between to $+\infty$): by the monotonicity principle~\cite[Theorem~5]{Poudevigne24} this transformation diminishes the quantity $\bbE\left[f(M)\right]$, with $M$ the VRJP partition function. Reproducing this contraction consecutively for each $i,|i|\leq n$, this transforms the graph $\cG_2$ back into $\cG_1$, and yields the expected inequality.
%
%
\medskip

\paragraph{\it Step 3: edge removal and weight decrease.} 
The graph $\cG_3$ is obtained by removing edges or lowering conductances in $\cG_2$. More precisely:
\begin{itemize}
  \item We remove edges in the original vertical line $L$ that connected different slabs: in particular, any path going from one slab to another has to pass through the new vertical line $L'$.
  \item We remove edges between $L'$ and the slabs that do not connect with the center of a slab.
  \item We lower the weights in the slabs so that they are all equal to $W_{c}^{(m)}+\eps$.
\end{itemize}
The resulting graph is illustrated in Figure~\ref{fig:graphcomparison}-C.
Here again, Theorem~\ref{thm_monotonicity} ensures that we have the inequality $\bbE\left[f(M^{(2)})\right]\leq \bbE\left[f(M^{(3)})\right]$.
\medskip

\paragraph{\it Step 4: equivalence to the toy graph $\tilde \cG$.}

We conclude the proof by showing that the VRJP on $\cG_3$ is equivalent to the VRJP on the simple graph $\tilde\cG$ from Definition~\ref{def:simplegraph}.
Indeed, let $U_3\coloneqq V_3\setminus L'$ denote the vertices of $\cG_3$ which are in one of the slabs (recall that this does not include $\star$), and let $\cF\coloneqq\gs\{\gb_x, x\in U_3\}$.
Then Lemma~\ref{lem:conditionalbeta} yields that, conditionally on $\cF$, the partition function $M^{(3)}$ on $\cG_3$ defined in~\eqref{eq:lem:allmoments:comparison:1} has the same law as on the realization of the graph $\tilde\cG$  with weights $\tilde W_i$ for $i\in (2m+1)\bbZ$, $|i|\le n-m-2$, which are given explicitly by the formula~\eqref{eq:lem:conditionalbeta}. 

In particular, those weights have the same law as the martingale $\psi_n^{(m)}$ of the VRJP on a box $V_n^{(m)} = \llbracket-n,n\rrbracket^{d-1}\times\llbracket-m,m\rrbracket$ started from its center and with boundary term
$W_{c}^{(m)}+\eps$ on the lateral faces of the box (recall that the top and bottom faces of each slab in $\cG_3$ are not connected to $\star$). Moreover the slabs in $\cG_3$ are at distance at least $2m+3$ from one another, so this also implies that those weights $\tilde W_i$ are independent. 
More precisely, the weights $\tilde W_i$ are i.i.d.\ with the same distribution as 
\[
\psi_{n}^{(m)} =  \sum_{\sigma \colon 0 \xrightarrow{V_n^{(m)}} V^{(m)}\setminus V_n^{(m)}}
(W_c^{(m)}+\eps)^{|\sigma|}\prod_{i=0}^{\vert \sigma\vert -1} \beta_{\sigma_i}^{-1} \,,
\]
where $V^{(m)}  =\bbZ^{d-1} \times \llbracket-m,m\rrbracket$ is the $m$-slab.
Therefore, taking the expectation, this yields
\[
\bbE\left[f(M^{(3)})\right]\,=\, \bbE\left[f\left(\tilde M\left(\tilde\cG_{\eps,\mu_{n}^{m,\eps}}^{n,m}\right)\right)\right]\,,
\]
where $\mu_{n}^{m,\eps}$ is the law of $\psi_{n}^{(m)}$.
This concludes the proof of~\eqref{eq:lem:allmoments:comparison}. 
\end{proof}

We also have the following claim, which control the convergence of the distribution~$\mu_{n}^{m,\eps}$.
\begin{lemma}
  \label{lem:nondegenerate}
For $m,\eps$ fixed, the sequence $(\mu_{n}^{m,\eps})_{n\ge0}$ admits a non-degenerate limit $\mu_{\infty}^{m,\eps}$ as $n\to+\infty$, i.e.\ $\mu_{\infty}^{m,\eps}$ is a probability measure on $(0,+\infty)$.
\end{lemma}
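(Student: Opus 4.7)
The plan is to recognise $\mu_n^{m,\eps}$ as the law at the origin of the VRJP-polymer martingale on the $m$-slab $V^{(m)} = \bbZ^{d-1}\times\llbracket -m,m\rrbracket$, run with constant conductance $W'\coloneqq W_c^{(m)}+\eps$, which is by construction strictly above the slab critical point. Indeed, inspection of the formula for $\psi_n^{(m)}$ written at the very end of the proof of Lemma~\ref{lem:allmoments:comparison} shows that it is exactly the martingale from Section~\ref{infinite-graph} associated with $\beta\sim \nu^{W'}_{V^{(m)}}$ and the exhaustion $V_n^{(m)}=\llbracket -n,n\rrbracket^{d-1}\times\llbracket -m,m\rrbracket$ of $V^{(m)}$; so $\mu_n^{m,\eps}$ is precisely the law of $\psi_n^{(m)}(0)$ in this setup.

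With this identification in hand, the proof reduces to two standard steps. First, by Theorem~\ref{Th_martingale} applied to the locally finite graph $(V^{(m)}, W')$, the sequence $(\psi_n^{(m)})_{n\ge 0}$ is a non-negative martingale with respect to $\cF_n^{(m)}=\sigma\{\beta_x, x\in V_n^{(m)}\}$, and therefore converges almost surely to some limit $\psi_\infty^{(m)}$. Consequently $\mu_n^{m,\eps}$ converges weakly to $\mu_\infty^{m,\eps}\coloneqq \mathrm{law}(\psi_\infty^{(m)})$. Second, I need to check that $\psi_\infty^{(m)}\in (0,+\infty)$ almost surely. Finiteness comes for free from Fatou's lemma and the martingale property: $\bbE[\psi_\infty^{(m)}]\le \liminf_n\bbE[\psi_n^{(m)}]=1<+\infty$, so $\psi_\infty^{(m)}<+\infty$ a.s.

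The only substantive point is positivity, but this follows immediately from the choice $W'=W_c^{(m)}+\eps > W_c^{(m)}$ combined with the definition of $W_c^{(m)}$ recalled in Section~\ref{results}. More precisely, Poudevigne's convex monotonicity (Theorem~\ref{thm_monotonicity}) applied with the convex test function $f(x)=e^{-\lambda x}$ and $\lambda\to+\infty$ shows that $W\mapsto \bbP(\psi_\infty^{(m)}(W)=0)$ is non-increasing, so that $\{W\,:\,\psi_\infty^{(m)}(W)>0 \text{ a.s.}\}$ is an interval of the form $(W_c^{(m)},+\infty)$ or $[W_c^{(m)},+\infty)$; in particular $\psi_\infty^{(m)}(W')>0$ a.s. I do not anticipate any serious obstacle in carrying out this plan, since it is essentially an unpacking of definitions already set up in the paper; the only mildly subtle point worth spelling out in a complete write-up is the adaptation of the $0$--$1$ law of Section~\ref{sec_Hd} to the slab $V^{(m)}$ (which is not vertex-transitive but is invariant under the horizontal translations of $\bbZ^{d-1}\times\{0\}$), ensuring that $W_c^{(m)}$ is a bona fide critical point for the sharp phase transition.
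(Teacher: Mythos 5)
Your proposal is correct and follows essentially the same route as the paper: the paper's proof is a one-line appeal to Theorem~\ref{Th_martingale} together with the fact that $W_c^{(m)}+\eps>W_c^{(m)}$ puts the slab VRJP in its transient phase, so that $\psi_\infty^{(m)}>0$ a.s.\ and $\mu_\infty^{m,\eps}$ is its law. Your write-up merely unpacks the same ingredients (identification of $\mu_n^{m,\eps}$ with the law of the slab martingale, a.s.\ convergence, and positivity above the critical point via the monotonicity and the translation-invariance $0$--$1$ law), all of which are consistent with the paper.
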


\begin{proof}
This follows immediately from Theorem~\ref{Th_martingale}: indeed, the VRJP on $V^{(m)}=\bbZ^{d-1}\times\llbracket-m,m\rrbracket$ with weights $W=W_{c}^{(m)}+\eps>W_{c}^{(m)}$ is transient, so its martingale $\psi_n^{(m)}$ admits an a.s.\ positive limit when $n\to+\infty$, and we denote its law with $\mu_{\infty}^{m,\eps}$.
\end{proof}

\subsection{Boundedness in \texorpdfstring{$L^p$}{Lp} for the toy model}

Let us state the following result on the toy model, from which we deduce easily Theorem~\ref{thm:allmoments}.
Recall the Definition~\ref{def:simplegraph} of the graph $\tilde \cG$ and recall~\eqref{def:Mntilde:allmoments} for its associated partition function~$\tilde M$. 

\begin{lemma}
  \label{lem:allmoments:tildeG}
  Let $\eps>0$ and $m\in \bbN$ be fixed.
  Then, for all $p\geq 1$ there exists some $\eps_0 = \eps_0(m,\eps,p)$ such that, as soon as $\eta_0 \leq \eps$ verifies $\mu_0((0,\eta_0))<\eps_0$ there is a constant $C=C(m,\eps,\eps_0,\eta_0,p)<+\infty$ such that 
  \[
    \sup_{n\geq 1} \bbE\big[(\tilde M)^p \big]\le C \,.
  \]
  Here again, $\bbE$ is the expectation over both $\gb$ and $\tilde W$. 
  In particular, $C$ does not depend on $n\ge0$ or the details of $\mu_0$.
  \end{lemma}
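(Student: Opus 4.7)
The plan is to exploit the one-dimensional structure of the simplified graph $\tilde \cG$, which behaves like a line segment $\llbracket -n,n\rrbracket$ with ``killing'' to the cemetery $\star$ at regularly spaced vertices. First, applying the restriction formula of Lemma~\ref{lem:conditionalbeta}, one sees that, conditionally on $\tilde W=(\tilde W_k)$, the law of $\beta$ on $V=\llbracket -n,n\rrbracket$ is $\nu_V^{\eps,\hat\eta}$, where $\hat\eta_x=\tilde W_{x,\star}$ denotes the killing rate at $x$ (with $\hat\eta_{\pm n}=\eps$, $\hat\eta_{u_k}=\tilde W_k$, and $\hat\eta_x=0$ otherwise). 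The partition function can then be written in Green's function form,
\[
  \tilde M = \sum_{y\in V} G^V_\beta(0,y)\,\hat\eta_y,\qquad G^V_\beta = \bigl((H_\beta)_{V\times V}\bigr)^{-1},
\]
where $H_\beta=\beta-\eps P$ is a 1D random Schr\"odinger operator on the interval, and the task reduces to controlling the $L^p$-norm of this sum.

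Next, I would establish an exponential decay $\bbE[G^V_\beta(0,y)^q]\le C_q e^{-c_q|y|}$ for all $q\ge 1$, uniformly in $n$, in the ``clean'' deterministic case where $\tilde W_k=\eta_0$ for all $k$. In that case, $H_\beta$ is a random tridiagonal operator with translation-invariant law (modulo boundary effects), and a transfer-matrix analysis applies: the matrices $T_x$ satisfying $\mathrm{tr}(T_x)=\beta_x/\eps$ and $\det(T_x)=1$ are hyperbolic on average (since $\beta_x$ has mean typically larger than $\eta_0 + 2\eps$ via the inverse Gaussian law from Definition~\ref{beta}), yielding a positive Lyapunov exponent $\gamma=\gamma(\eps,\eta_0,m)>0$ and hence the desired decay. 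This already implies the lemma in the deterministic case $\mu_0=\delta_{\eta_0}$, as $\tilde M$ becomes a sum of exponentially decaying terms in $|y|$ with deterministic coefficients.

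For the general random case, I would carry out a block decomposition. Partition the cemetery-connected vertices $u_k$ into consecutive blocks of length $L=L(p)$ to be tuned, and call a block ``good'' if every $\tilde W_k$ inside it is at least $\eta_0$. Each block is good independently with probability $\ge (1-\eps_0)^L$. On configurations where the blocks around $0$ are good, one can run the decay estimate of the previous step, relying on the $1$-dependence of $\beta$ and on the monotonicity principle of Theorem~\ref{thm_monotonicity} to compare to the clean graph. Bad blocks locally disrupt the decay, but their contribution can be bounded crudely (for instance, by using monotonicity to replace a bad block by one with no killing, and controlling the resulting 1D random operator). The key is to pick $\eps_0$ small enough compared to $e^{-cL}$ so that configurations with many bad blocks near the origin contribute a summable amount to $\bbE[\tilde M^p]$.

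The main obstacle lies in the last step: getting a quantitative Green's function decay with tractable constants, and propagating it through the random environment despite the delicate interplay between the magnitude of $\tilde W_k$ and the size of $G^V_\beta(0,u_k)$, both of which are influenced by $\beta_{u_k}$ whose distribution depends on $\tilde W_k$ via the inverse Gaussian law. The cancellation that ensures $\bbE[\tilde M]=1$ stems from this interplay, and accounting for it at higher moments will likely require fine conditional moment computations using the explicit Laplace transform~\eqref{eq:laplace-nubetaweta} from Definition~\ref{beta}, together with several applications of Theorem~\ref{thm_monotonicity} to compare with simpler reference graphs on which explicit estimates are available.
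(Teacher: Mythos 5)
Your representation $\tilde M = \sum_{y} G^V_\beta(0,y)\,\hat\eta_y$ is correct, and the difficulty you identify at the end (the delicate interplay between $\hat\eta_y$ and $G^V_\beta(0,y)$ that underlies $\bbE[\tilde M]=1$) is exactly the right thing to worry about. However, what you have is a plan with genuine gaps, not a proof, and the route you propose is substantially harder than the one the paper takes.

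Two concrete problems. First, the transfer-matrix/Lyapunov step is asserted but not carried out, and it is not merely a technicality: a positive Lyapunov exponent gives a.s.\ exponential decay of $G^V_\beta(0,y)$, but what you need is $\bbE[G^V_\beta(0,y)^q]\le C_q e^{-c_q|y|}$ \emph{for every} $q\ge 1$ with $c_q>0$. This is a large-deviations estimate for products of random matrices; the rate function generically degenerates as $q\to\infty$, and there is no reason a priori that $c_q$ stays bounded away from $0$. On top of this, the $\beta$-field is only $1$-dependent (not i.i.d.), its marginal at a site depends on the killing weight there, and the weights $\hat\eta_y$ are correlated with the Green's function through $\beta$, so even formulating the transfer-matrix setup cleanly takes work. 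Second, the block decomposition is left vague; you say yourself that this is ``the main obstacle.'' In particular, your suggestion to handle a bad block by ``replacing it by one with no killing'' does go in the right direction by monotonicity (removing edges raises $\bbE[f(\psi_n)]$), but after the replacement you still need to control the resulting Green's function across a region with no killing, and it is not clear how that closes.

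The paper's proof avoids all of this by not trying to control the decay of the whole Green's function. It sets $K=\inf\{j\ge 0:\tilde W_{(2m+1)j,\star}\ge \eta_0\}$, which is stochastically dominated by a geometric random variable with parameter $1-\eps_0$. Conditionally on $K=k$, it uses Theorem~\ref{thm_monotonicity} to \emph{delete every edge except} those on the segment $\llbracket 0,(2m+1)k\rrbracket$ and a single killing edge of weight $\eta_0$ at the endpoint; this removal can only increase the $p$-th moment. On the resulting pure line segment there is an exact identity $\tilde M_\ell=\prod_{i=0}^{\ell}A_i$ with the $A_i$ independent inverse-Gaussian variables, giving $\bbE[(\tilde M_\ell)^p]=\bbE[Y_\eps^p]^{\ell}\,\bbE[Y_{\eta_0}^p]$. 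Summing over $k$ against the geometric tail of $K$ and choosing $\eps_0$ so that $\eps_0\,\bbE[Y_\eps^p]^{2m+1}<1$ finishes the proof. The crucial idea you did not use is that it suffices to find \emph{one} good killing site and prune the graph there; this replaces the need for uniform Green's function decay by a single explicit one-dimensional computation. If you wanted to salvage your approach, you would in effect be re-deriving this pruning step, so I would recommend following the paper's strategy instead.
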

  
Let us first conclude the proof of Theorem~\ref{thm:allmoments} before we turn to that of Lemma~\ref{lem:allmoments:tildeG}.

\begin{proof}[Proof of Theorem~\ref{thm:allmoments}]
  Let $W>W_c^{\rm slab}$ and $p\geq 1$.
  In particular, there is some $m \in \bbN$ such that $W>W_c^{(m)}$, and we let $\eps \coloneqq \frac12 (W-W_c^{(m)}) >0$.
  Our goal is to show that $\sup_n \bbE[(\psi_n)^p]<+\infty$, but by Lemma~\ref{lem:allmoments:comparison}, this will follow if we prove that
\[
    \sup_n \bbE\left[\left(\tilde M\left(\tilde\cG_{\eps,\mu_{n}^{m,\eps}}^{n,m}\right)\right)^p\right]\,<\,+\infty\,.
\]
  We let $\eps_0$ be as in Lemma~\ref{lem:allmoments:tildeG} and also $\eta_0>0$ be such that $\sup_n\{\mu_{n}^{m,\eps}((0,\eta_0)), n\ge0\}< \eps_0$; the fact that $\eta_0$ may be chosen independently of $n$ follows from Lemma~\ref{lem:nondegenerate}. 
Then, Lemma~\ref{lem:allmoments:tildeG} yields that the expectation above is bounded by some finite constant $C = C(m,\eps,p)$ uniformly in $n\ge0$, which concludes the proof.
\end{proof}

\begin{proof}[Proof of Lemma~\ref{lem:allmoments:tildeG}]
  Let $K \coloneqq \inf\{  j \geq 0, \tilde W_{ (2m+1)j,\star} \geq  \eta_0 \}$, so that by assumption $K$ is dominated by a geometric random variable with parameter $1-\eps_0$.

  Then, conditionally on $K=k$, we can compare $\tilde \cG$ with a one-dimensional graph $\tilde \cG_{\ell\wedge n}$ with $\ell \coloneqq (2m+1)k$.
  If $\ell <n$, we construct $\tilde \cG_\ell$ from $\tilde \cG$ by keeping only the edges $\{i,i+1\}$ for $0\leq i \leq \ell$, with weight $\tilde W_{i,i+1} =\eps$, and also the extra edge $\{\ell, \star\}$, reducing its weight to $\eta_0$.
  If $(2m+1)k\geq n$, we consider only $\tilde \cG_n$ as above (we also reduce the weight of the edge $\{n,\star\}$ from $\eps$ to~$\eta_0$).
  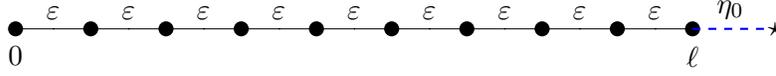
\begin{figure}
    \begin{center}
      \begin{tikzpicture}
        \draw[step=0.5,black,thin] (0,0) grid (9,0);
        \foreach \x in {0,1,2,3,4,5,6,7,8} { 
          \fill[color=black] (\x,0) circle (0.1);
          \node[above] at (\x+0.5,0) {$\eps$};
        }
        \node[below] at (0,-0.1) {$0$};
        \fill[color=black] (9,0) circle (0.1);
        \node[below] at (9,-0.1) {$\ell$};
        \draw[dashed, thick, blue] (9,0) -- (10,0);
        \node[above] at (9.5,0) {$\eta_0$};
        \node at (10.1,0) {$\star$};
      \end{tikzpicture}
      \caption{\footnotesize Illustration of the graph $\tilde \cG_{\ell}$, here with $\ell = 9$.}\label{figure_4}
      \end{center}
      \vskip-0.3cm
  \end{figure}


Finally, let $\tilde M_\ell$ the VRJP partition function on $\tilde \cG_\ell$, \textit{i.e.}\ 
\[
  \tilde M_\ell  = \sum_{\sigma : 0 \xrightarrow{\llbracket 0, \ell \rrbracket}{\star}}  \eta_0 \, \eps^{|\sigma|-1}\prod_{i=1}^{|\sigma|-1} \gb_{\sigma_i}^{-1} \,.
\]
Then, conditioning on the value of $K$ and using the monotonicity property (Theorem~\ref{thm_monotonicity}), we get 
\begin{align*}
  \bbE\big[ (\tilde M)^p \big] & \leq \sum_{k=0}^{ n/(2m+1) } \bbP\big( K= k \big) \bbE\big[ (\tilde M_{(2m+1)k})^p \big] + \bbP(K \geq n/(2m+1) ) \bbE\big[ (\tilde M_n)^p \big] \,.
\end{align*}
Note that by a direct computation (we postpone the proof until the end of this section), we have that 
  \begin{equation}
    \label{momentk}
    \bbE\big[(\tilde M_k)^p \big] = \bbE[(Y_{\eps})^{p}]^{(2m+1)k} \, \bbE[(Y_{\eta_0})^p] \,.
  \end{equation}
where 
$Y_{a} \sim \mathrm{IG}(1,a)$. In particular, we can write that there are constants $C_0 = C_0(m,\eps,p)= \bbE[Y_{\eps}^{p}]^{(2m+1)}$ and $c_1 = c_1(\eta_0,p) = \bbE[(Y_{\eta_0})^p]$ such that\footnote{There is no need here, but we could bound very roughly $\bbE[(Y_{a})^p] \leq \frac{2^p}{a^p} p! \bbE[e^{\frac{a}{2} Y_a}] = \frac{2^p p!}{a^p} e^a $, where we have used the Laplace transform of the inverse Gaussian distribution in the last identity.} 
$\bbE\big[(\tilde M_k)^p \big] = c_1 C_0^k$.
Therefore, since we have by assumption that $\bbP( K= k)  \leq \bbP(K\geq k) = \eps_0^k$, we obtain
\[
  \bbE\big[ (\tilde M)^p \big] \leq c_1 \sum_{k=0}^{ +\infty } \eps_0^k C_0^k  \leq 2 c_1 \,,
\]
where we have chosen $\eps_0 =\frac12 C_0^{-1}$ for the last inequality; recall that $c_1$ still depends on $\eta_0$.

It therefore only remains to prove~\eqref{momentk}. It is a classical result,
see e.g. \cite[Theorem~3]{Chen-Zeng18} which gives the statement if we express the polymer $\tilde M_\ell$ in terms of the mixing field of the VRJP starting at the vertex $\star$. Let us however give a self-contained proof of the result using the $\beta$-potential. We consider the 1-dimensional graph with length $\ell+1$ given in Figure~\ref{figure_4}. 
Let $A_0, \ldots, A_{\ell}$ be independent random variables with law $A_{i}\sim\mathrm{IG}(1,\epsilon)$ for \(0\leq i \leq \ell-1\) and $A_\ell\sim \mathrm{IG}(1,\eta_0)$. 
Define now $(\tilde \beta_{i})_{i=0, \ldots, \ell}$ by
$$
\begin{cases}
\tilde \beta_0 = \frac{\epsilon}{A_0} \,,&
\\
\tilde \beta_i= {\epsilon A_{i-1}}+\frac{\epsilon}{A_{i}},& 1\le i\le \ell-1 \,,
\\
\tilde \beta_\ell= {\epsilon A_{\ell-1}}+\frac{\eta_0}{A_{\ell}}\,.&
\end{cases}
$$
Since the Laplace transform of the inverse Gaussian law and its inverse satisfy, for $Y_a\sim \mathrm{IG}(1,a)$,
$$
\bbE(e^{-\demi \lambda  a Y_a})=e^{-a(\sqrt{1+\lambda}-1)}, \;\;\; \bbE(e^{-\demi \lambda a \frac{1}{Y_a}})=\frac{e^{-a(\sqrt{1+\lambda}-1)}}{\sqrt{1+\lambda}} \,,
$$
we see that $(\tilde \beta_{i})_{i=0, \ldots, \ell}$ has the same law as the potential $\beta_{|\{0, \cdots, \ell\}}$ for the graph of Figure~\ref{figure_4}, recalling the expression~ \eqref{eq:laplace-nubetaweta} for its Laplace transform.

Besides, simple computations show that $\psi_{\tilde \ggg_\ell} (x)$ given by
$\psi_{\tilde \ggg_\ell}(\star)=1$
and  $\psi_{\tilde \ggg_\ell}(x)=\prod_{i=x}^\ell A_i$
solves the equation $(H_\beta \psi_{\tilde \ggg_\ell})(x)=0$ on that weighted graph, for any $x=0, \ldots, \ell$. 
Hence it implies that $\psi_{\tilde \ggg_\ell} (x)$ is the partition function of polymers starting at $x$ and endpoint~$\star$. We conclude that
$$
\tilde M_\ell=\psi_{\tilde \ggg_\ell}(0)= \prod_{i=0}^\ell A_i \,,
$$
which proves~\eqref{momentk} since the $(A_i)_{0\leq i\leq \ell}$ are independent with the correct inverse Gaussian laws.
\end{proof}



\appendix
\section{Proof of the final statement of Lemma~\ref{lem_Mmn}}\label{appendix_A}

Recall the notations of Lemma~\ref{lem_Mmn}. It remains to prove that $M_{n,\infty}=M_n$. The idea is to decompose the martingale $M_{n,m}$ according to the contributions of the polymers which exit the box $\bbB_{n,m}$ on its side or on the top, and to relate them to the exit probabilities of the VRJP.
We set
$$
\partial_{n,m}^0=\partial_{n,m}^+\cap \partial_{n}^+=\{x\in \partial_{m,n}^+, \; x_d=n\}, \;\;\; \partial_{n,m}^1=\partial_{n,m}^+\setminus \partial_{n}^+=\{x\in \partial_{n,m}^+, \; x_d<n\},
$$
respectively the top and side parts of the boundary of the box $\bbB_{n,m}$. Accordingly, we define, for $i=0,1$,
\begin{equation}\label{M_mn_}
M_{n,m}^i :=\sum_{\sigma \colon 0\xrightarrow{\bbB_{n,m}} \partial^i_{n,m}} W^{|\sigma|} \prod_{i=1}^{|\sigma|-1} \beta_{\sigma_i}^{-1}.
\end{equation}
the contribution in $M_{n,m}$ of the polymers that exit on the top or the side of $\bbB_{n,m}$. 
Hence, we want to prove that $\lim_{m\to \infty} M_{n,m}^1=0$ a.s.
From the proof of Lemma~\ref{lem_Mmn}, we know that $M_{n, m}$ is bounded in $L^p$, so that $M_{n,\infty}>0$ a.s. and the statement is equivalent to prove that $\lim_{m\to \infty} \frac{M_{n,m}^1}{M_{n,m}}=0$ a.s. The lemma below relates this ratio to the exit probability of the VRJP.
\begin{lemma}\label{M_n_VRJP}
We have
\begin{align}\label{proba_sortie}
\bbE\left( \frac{M_{n,m}^1}{M_{n,m}}\right) =P^{VRJP}_0 \left( 0\xrightarrow{\bbB_{n,m}} \partial^1_{n,m} \right),
\end{align}
where the right-hand side denotes the probability that the VRJP starting at 0 exits the box $\bbB_{n,m}$ by the side boundary $\partial^1_{n,m}$.
\end{lemma}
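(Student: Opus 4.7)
The plan is to reduce the statement to a hitting-probability computation for the VRJP on a finite augmented graph, where the MJP mixing representation can be applied directly.

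\medskip\noindent\textbf{Step 1: Reduction to a finite graph.} I would first form the finite augmented graph $\tilde\cG_{n,m}$ with vertex set $\bbB_{n,m}\cup\{\delta_0,\delta_1\}$, obtained from $\bbB_{n,m}\cup\partial^+_{n,m}$ by contracting each part $\partial^i_{n,m}$ into a single cemetery vertex $\delta_i$, with conductance $\hat\eta^i_{n,m}(x):=\sum_{y\in\partial^i_{n,m}, y\sim x}W$ from $x\in\bbB_{n,m}$ to $\delta_i$. Then I would show that the VRJP on $\bbH_d$ started at $0$ and stopped at the first exit of $\bbB_{n,m}$ has the same exit-location law as the VRJP on $\tilde\cG_{n,m}$ stopped at the first visit to $\{\delta_0,\delta_1\}$. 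The point is that, before the first exit, local times on $\partial^+_{n,m}$ are identically zero, so the instantaneous jump rate from $x\in\bbB_{n,m}$ into $\partial^i_{n,m}$ (on $\bbH_d$) coincides with $\hat\eta^i_{n,m}(x)$, which is exactly the rate from $x$ to $\delta_i$ in $\tilde\cG_{n,m}$; contracting $\partial^i_{n,m}$ into $\delta_i$ therefore preserves the first-exit distribution.

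\medskip\noindent\textbf{Step 2: MJP representation on $\tilde\cG_{n,m}$.} Since $\tilde\cG_{n,m}$ is finite, I would invoke the MJP mixing representation recalled in Section~\ref{sec_beta_inifinite}: with $\beta\sim\nu^W_{\tilde\cG_{n,m}}$, conditionally on $\beta$ the VRJP is, up to a time change, a Markov jump process with rates $W_{i,j}\,G_\beta(0,j)/G_\beta(0,i)$, where $G_\beta=H_\beta^{-1}$ on $\tilde\cG_{n,m}$. Consequently, denoting by $h(0|\beta)$ the probability that this MJP first hits $\delta_1$ rather than $\delta_0$, one has
\[
P^{VRJP}_0\bigl(0\xrightarrow{\bbB_{n,m}}\partial^1_{n,m}\bigr) \,=\, \bbE\bigl[h(0|\beta)\bigr].
\]
The function $h(\cdot|\beta)$ is the voltage in the reversible electric network with conductances $c_{i,j}=W_{i,j}\,G_\beta(0,i)G_\beta(0,j)$ (boundary values $1$ at $\delta_1$, $0$ at $\delta_0$). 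Setting $\phi(i):=h(i|\beta)G_\beta(0,i)$, the harmonicity equation becomes $H_\beta\phi(i)=h(0|\beta)\,\delta_{0,i}$ on $\bbB_{n,m}$, with $\phi(\delta_0)=0$, $\phi(\delta_1)=G_\beta(0,\delta_1)$; inverting via the restricted Green's function $\hat G^{(n,m)}$ and evaluating at $0$ yields
\[
h(0|\beta) \,=\, \frac{G_\beta(0,\delta_1)\,\psi^1_{n,m}(0)}{G_\beta(0,0)-\hat G^{(n,m)}(0,0)},
\]
where $\psi^i_{n,m}:=\hat G^{(n,m)}\hat\eta^i_{n,m}$.

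\medskip\noindent\textbf{Step 3: The key identity.} The remaining step is to verify
\[
\bbE\bigl[h(0|\beta)\,\big|\,\beta|_{\bbB_{n,m}}\bigr] \,=\, \frac{\psi^1_{n,m}(0)}{\psi_{n,m}(0)} \,=\, \frac{M^1_{n,m}}{M_{n,m}}.
\]
I would use the conditioning formula of Lemma~\ref{lem:conditionalbeta} (in the extended form of Remark~\ref{rem:condionning_Hd}): conditionally on $\beta|_{\bbB_{n,m}}$, the pair $(\beta_{\delta_0},\beta_{\delta_1})$ has law $\nu^{\check W,0}_{\{\delta_0,\delta_1\}}$, where $\check W_{\delta_i,\delta_j}=\sum_{x}\hat\eta^i_{n,m}(x)\,\psi^j_{n,m}(x)$ is computed via Schur's complement. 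The quantities $G_\beta(0,\delta_1)$ and $G_\beta(0,0)-\hat G^{(n,m)}(0,0)$ can then be expanded as explicit rational expressions in $\beta_{\delta_0},\beta_{\delta_1}$ and in the $\psi^i_{n,m}(0), \check W_{\delta_i,\delta_j}$, and the identity reduces to an integration against the inverse-Gaussian-type distribution $\nu^{\check W,0}_{\{\delta_0,\delta_1\}}$. Taking the expectation over $\beta|_{\bbB_{n,m}}$ then concludes.

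\medskip\noindent\textbf{Expected main obstacle.} The heart of the argument is this last integration identity in step 3: after the linear-algebraic reductions, $h(0|\beta)$ is \emph{not} equal to $M^1_{n,m}/M_{n,m}$ pathwise (indeed, $M^1/M$ depends only on $\beta|_{\bbB_{n,m}}$, while $h(0|\beta)$ depends also on $\beta_{\delta_0},\beta_{\delta_1}$), so the equality must come from a genuine cancellation in the $\nu^{\check W,0}_{\{\delta_0,\delta_1\}}$-integral. Already in the toy case $\bbB_{n,m}=\{0\}$ (first-jump probability of the VRJP), it reduces to identities of the form $\bbE[\beta_{\delta_0}/(\beta_{\delta_0}+k\beta_{\delta_1})]=1/(1+k)$ for explicit $k$ depending on $\hat\eta^i_{n,m}$; these encapsulate the core of the VRJP-to-$\beta$-polymer duality and are the technical content of the lemma.
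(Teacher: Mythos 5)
Your plan has the right shape (contract the boundary to a cemetery, use the MJP mixing representation, expand the exit probability into paths), and the reductions in Steps~1--2 are correct: the VRJP first-exit law does pass to the contracted finite graph, and your formula $h(0|\beta)=G_\beta(0,\delta_1)\psi^1_{n,m}(0)/(G_\beta(0,0)-\hat G^{(n,m)}(0,0))$ is a correct consequence of harmonicity. But Step~3 — which you yourself call the ``heart of the argument'' — is \emph{stated, not proved}. You need to show that the conditional expectation of $h(0|\beta)$ given $\beta|_{\bbB_{n,m}}$ equals $M^1_{n,m}/M_{n,m}$, and this is a genuinely nontrivial integration against $\nu^{\check W,0}_{\{\delta_0,\delta_1\}}$: conditionally on $\beta|_{\bbB_{n,m}}$ the variables $\beta_{\delta_0},\beta_{\delta_1}$ are \emph{not} independent (the Schur complement gives a strictly positive $\check W_{\delta_0,\delta_1}$), and the ratio $G_\beta(0,\delta_1)/(G_\beta(0,0)-\hat G^{(n,m)}(0,0))$ depends on both in a nonlinear way. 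As written, the proof has a real gap.

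The obstruction you run into is self-inflicted: it comes from contracting $\partial^0_{n,m}$ and $\partial^1_{n,m}$ into \emph{two separate} cemetery vertices $\delta_0,\delta_1$. The paper instead contracts the \emph{entire} outer boundary $\partial^+_{n,m}$ into a \emph{single} vertex $\star$ (wired boundary), and distinguishes the two exit types by which edge is used to enter $\star$, not by which state is hit. The path expansion of the exit probability then produces a common prefactor $G_\beta(0,\star)/G_\beta(0,0)$ in front of both the $\partial^0$- and $\partial^1$-contributions, and the loop-at-$0$ weights (which involve $\tilde\beta_0\neq\beta_0$) are likewise common. Normalizing by the total exit probability $1$, both of these cancel, and what remains is \emph{pathwise} equal to $M^1_{n,m}/M_{n,m}$; in particular the resulting ratio depends only on $\beta_i$ for $i\in\bbB_{n,m}\setminus\{0\}$ and involves no boundary $\beta$'s at all, so no integration identity is needed. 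With two cemeteries the factors $G_\beta(0,\delta_0)$ and $G_\beta(0,\delta_1)$ do not cancel, which is precisely why $h(0|\beta)\neq M^1_{n,m}/M_{n,m}$ pathwise and why you are forced into the unproved Step~3. In short: use a single cemetery and the difficulty you flag disappears.
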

\begin{proof}
Consider the graph $\tilde \cG_{n,m}=(\tilde \bbB_{n,m}, \tilde E_{n,m})$ obtained by contracting all the vertices of $\partial_{n,m}^+$ to a single point $\star$, \textit{i.e.}\ by taking a wired boundary condition on $\bbB_{n,m}$. Hence, the vertex set is $\tilde \bbB_{n,m}=\bbB_{n,m}\cup\{\star\}$ and $\star$ is connected to each point of the interior boundary of $\bbB_{n,m}$ by an edge with conductance $W$. By the restriction Lemma~\ref{lem:conditionalbeta}~(i), $\beta_{\bbB_{n,m}}$ has the same law under $\nu_{\tilde \bbB_{n,m}}^W$ and under $\nu_{\bbH_d}^W$.

For simplicity, we now denote by $\beta=(\beta_{i})_{i\in \tilde B_{n,m}}$ the random $\beta$-potential on the graph $\tilde \cG_{n,m}$ and by $G_\beta$ its Green function.  Consider now the VRJP on the weighted graph $\tilde \cG_{n,m}$. By the representation~\eqref{VRJP-rep}, the VRJP starting at $0$, is, up to a time change, a mixture of Markov Jump Processes with jump rate
$$
\indic_{i\sim j} W \frac{G_\beta(0,j)}{G_\beta(0,i)}.
$$
In particular, if we denote by $(\tilde Y_n)$ the associated discrete time process, then it is a mixture of Markov chains with transition probabilities
\begin{eqnarray}\label{MC-rep}
\frac{1}{\tilde \beta_i} \indic_{i\sim j} W \frac{G_\beta(0,j)}{ G_\beta(0,i)} \qquad
\text{ where } \tilde \beta_i= \sum_{j, j\sim i} W \frac{G_\beta(0,j)}{G_\beta(0,i)}.
\end{eqnarray}
Remark that by definition of $G_\beta$ we have $\tilde \beta_i=\beta_i$ if $i\neq 0$.

Denote by $P^{\beta}_0$ the law of the Markov chain on the graph $\tilde \cG_{n,m}$ with transition probabilities given by~\eqref{MC-rep}, and, for $i=0,1$, let
$$
P^{\beta}_0\Big( 0\xrightarrow{\bbB_{n,m}} \partial^i_{n,m} \Big)
$$
be the probability that the Markov chain reaches the boundary vertex $\star$ by $\partial^0_{n,m}$ or~$\partial^1_{n,m}$. Then by definition of $P^{\beta}_0$ we have,
$$
P^{\beta}_0\Big( 0\xrightarrow{\bbB_{n,m}} \partial^i_{n,m} \Big) = 
\frac{G_\beta(0,\star)}{G_\beta(0,0)} \sum_{\sigma \colon 0\xrightarrow{\bbB_{n,m}} \partial^i_{n,m}} W^{|\sigma|} \prod_{i=1}^{|\sigma|-1} \tilde \beta_{\sigma_i}^{-1}.
$$
In the last sum, the contribution of the loops from 0 do not depend on the choice of the boundary $\partial^0_{n,m}$ or $\partial^1_{n,m}$. 
We get 
\begin{align*}
P^{\beta}_0\Big( 0\xrightarrow{\bbB_{n,m}} \partial^1_{n,m} \Big)
&= \frac{P^{\beta}_0\Big( 0\xrightarrow{\bbB_{n,m}} \partial^1_{n,m} \Big)}{P^{\beta}_0\Big( 0\xrightarrow{\bbB_{n,m}} \partial^0_{n,m} \Big)+ P^{\beta}_0\Big( 0\xrightarrow{\bbB_{n,m}} \partial^1_{n,m} \Big)}
\\
&= \sum_{\sigma \colon 0\xrightarrow{\bbB_{n,m}} \partial^1_{n,m}} W^{|\sigma|} \prod_{i=1}^{|\sigma|-1} \tilde \beta_{\sigma_i}^{-1}
\Bigg\slash
\sum_{\sigma \colon 0\xrightarrow{\bbB_{n,m}} \partial^+_{n,m}} W^{|\sigma|} \prod_{i=1}^{|\sigma|-1} \tilde \beta_{\sigma_i}^{-1} \,.
\end{align*}
Besides, considering that $\beta_i=\tilde\beta_i$ for $i\neq 0$, we see that \(P^{\beta}_0\big( 0\xrightarrow{\bbB_{n,m}} \partial^1_{n,m} \big)= \frac{M_{n,m}^1}{M_{n,m}}\).
\end{proof}

The last step is to prove that the right-hand side of \eqref{proba_sortie} goes to $0$ when $ m$ tends to $+\infty$. Consider $(Y_t)_{t\ge 0}$ the VRJP on $\bbH_d$ starting at 0. We denote by $\tau_{n,m}$ the exit hitting time of $\bbB_{n,m}$,
$$
\tau_{n,m}=\inf\{t\ge 0, \;\;\; Y_t\in \partial_{n,m}^+\},
$$
and by $A^1_{n,m}$ the event that it exits on the side of the box
$$
A^1_{n,m}=\{ Y_{\tau_{n,m}} \in   \partial_{n,m}^1\}.
$$
We also set $\cF^Y_{n,m}=\sigma\{Y_t, \;\; t\le \tau_{n,m}\}$. We prove that there exists some $c>0$, depending only $W$, $d$ and $n$, such that for all $m\ge 1$,
\begin{eqnarray}\label{lower_prob}
\bbP^{VRJP}_{0}\left( (A^1_{n,m+2})^c \; \vert \; A^1_{n,m}\right)\ge c.
\end{eqnarray}
which is enough to end the proof.

Remark from the definition of the VRJP, see \eqref{def_VRJP}, that the jump rate from $x\in \partial_{n,m}^1$ to the set $\partial_{n,m+1}^+$ is bounded from below by $W$ (since there is at least one edge from any $x\in \partial_{n,m}^1$ to the set $\partial_{n,m+1}^+$). It implies that, conditionally on $\cF^{Y}_{n,m}$ and on the event $A_{n,m}^1$, the local time 
$$
L_{\partial_{n,m}^1}(\tau_{n,m+1}):= \sum_{x\in \partial_{n,m}^1} L_x(\tau_{n,m+1})
$$ 
is dominated by an exponential random variable of parameter $W$. Hence, there is a constant $c_0>0$ such that $\bbP^{VRJP}_{0}( L_{\partial_{n,m}^1}(\tau_{n,m+1})\le 1 \; \vert \; A^1_{n,m})\ge c_0$. Besides, conditionally on $L_{\partial_{n,m}^1}(\tau_{n,m+1})\le 1$ and on the event $A^1_{n,m+1}$, the probability that the VRJP after the time $\tau_{n,m+1}$ makes only jumps in the up direction $e_d$ until it reaches $\partial_{n,m+1}^0$ is bounded from below by a constant. The last event is included in $(A^1_{n,m+2})^c$, which implies the inequality \eqref{lower_prob}.

\hfill\break
\noindent
{\bf Acknowledgment.}
This work is supported by the  project  ANR LOCAL (ANR-22-CE40-0012-02)  operated by the Agence Nationale de la Recherche (ANR).

\bibliographystyle{abbrv}
\bibliography{biblio.bib}

\end{document}